\newtheorem{thm}{Theorem}[section]
\newtheorem{lem}[thm]{Lemma}
\newtheorem{cor}[thm]{Corollary}
\newtheorem{prop}[thm]{Proposition}
\newtheorem*{thm*}{Dirichlet's Theorem}
\newtheorem*{cor*}{Dirichlet's Corollary}
\newtheorem*{thm1*}{Khintchine-Groshev Theorem}
\theoremstyle{definition}
\newtheorem{defi}[thm]{Definition}
\theoremstyle{remark}
\newtheorem{rem}[thm]{Remark}
\numberwithin{equation}{section}
\definecolor{esperance}{rgb}{0.0,0.5,0.0}
\newcommand{\bp}{\mathbf{p}}
\newcommand{\bq}{\mathbf{q}}
\newcommand{\bs}{\mathbf{s}}
\newcommand{\bx}{\mathbf{x}}
\newcommand{\by}{\mathbf{y}}
\newcommand{\R}{\mathbb{R}}
\newcommand{\Z}{\mathbb{Z}}
\newcommand{\de}{\delta}
\newcommand{\al}{\alpha}
\newcommand{\ga}{\gamma}
\newcommand{\del}{\delta}
\newcommand{\Del}{\Delta}
\newcommand{\lam}{\lambda}
\newcommand{\Lam}{\Lambda}
\newcommand{\eps}{\epsilon}
\newcommand{\ka}{\kappa}
\newcommand{\om}{\omega}
\newcommand{\vphi}{\varphi}
\newcommand{\cA}{\mathcal{A}}
\newcommand{\cC}{\mathcal{C}}
\newcommand{\cH}{\mathcal{H}}
\newcommand{\cR}{\mathcal{R}}
\newcommand{\cS}{\mathcal{S}}
\newcommand{\bR}{\mathbb{R}}
\newcommand{\bZ}{\mathbb{Z}}
\newcommand{\bQ}{\mathbb{Q}}
\newcommand{\bN}{\mathbb{N}}
\newcommand\wh[1]{\widehat{#1}}
\newcommand\set[1]{\left\{#1\right\}}
\newcommand\mb[1]{\mathbf{#1}}
\newcommand\tb[1]{\textbf{#1}}
\newcommand{\onto}{\xymatrix{\ar@{>>}[r]&}}
\newcommand{\eq}[1]
{
\begin{equation*}
{#1}
\end{equation*}
}
\newcommand{\eqlabel}[2]
{
\begin{equation}
{#2}\label{#1}
\end{equation}
}
\newcommand*{\rom}[1]{\expandafter\@slowromancap\romannumeral #1@}
\begin{document}

\title[Hausdorff measures and Dirichlet non-improvable affine forms]{Hausdorff measure of sets of Dirichlet non-improvable affine forms}

\author{Taehyeong Kim}
\address{Taehyeong ~Kim. Department of Mathematical Sciences, Seoul National University, {\it kimth@snu.ac.kr}}
\author{Wooyeon Kim}
\address{Wooyeon Kim. Department of Mathematics, ETH Z\"{u}rich, 
{\it wooyeon.kim@math.ethz.ch}}

% \thanks will become a 1st page footnote.
\thanks{}

%\author{}

\keywords{}

\def\thefootnote{}
\footnote{2020 {\it Mathematics
Subject Classification}: Primary 11J20 ; Secondary 11K60, 37A17.}   %%d 
\def\thefootnote{\arabic{footnote}}
\setcounter{footnote}{0}

\begin{abstract}
For a decreasing real valued function $\psi$, a pair $(A,\mb{b})$ of a real $m\times n$ matrix $A$ and $\mb{b}\in\bR^m$ is said to be $\psi$-Dirichlet improvable if the system
$$\|A\mb{q}+\mb{b}-\mb{p}\|^m < \psi(T)\quad\text{and}\quad\|\mb{q}\|^n < T$$
has a solution $\mb{p}\in\bZ^m$, $\mb{q}\in\bZ^n$ for all sufficiently large $T$, where $\|\cdot\|$ denotes the supremum norm. Kleinbock and Wadleigh (2019) established an integrability criterion for the Lebesgue measure of the $\psi$-Dirichlet non-improvable set. In this paper, we prove a similar criterion for the Hausdorff measure of the $\psi$-Dirichlet non-improvable set. Also, we extend this result to the singly metric case that $\mb{b}$ is fixed. As an application, we compute the Hausdorff dimension of the set of pairs $(A,\mb{b})$ with uniform Diophantine exponents $\wh{w}(A,\mb{b})\leq w$.
\end{abstract}

\maketitle
\section{Introduction}

Let $m,n$ be positive integers, and denote by $M_{m,n}(\bR)$ the set of $m\times n$ real matrices. Dirichlet's Theorem (1842) is the starting point of this paper:

\begin{thm*}
For any $A\in M_{m,n}(\bR)$ and $T>1$, there exist $\mb{p}\in\bZ^m$ and $\mb{q}\in\bZ^n \setminus \{\mb{0}\}$ such that 
\eqlabel{DT}{
\|A\mb{q}-\mb{p}\|^{m}\leq \frac{1}{T}\quad\text{and}\quad \|\mb{q}\|^{n}< T.
}
\end{thm*}
Here and hereafter, $\|\cdot\|$ stands for the supremum norm on $\bR^k$ given by $\|\mb{x}\|=\max_{1\leq i\leq k}|x_i|$. Dirichlet's theorem is a \textit{uniform} Diophantine approximation result because it guarantees a non-trivial integral solution for \textit{all} $T$. A weaker form guaranteeing that such a system is solvable for an \textit{unbounded} set of $T$ is called \textit{asymptotic} approximation. The following corollary is an example of asymptotic approximation from Dirichlet's theorem. 

\begin{cor*}
For any $A\in M_{m,n}(\bR)$ there exist infinitely many $\mb{q}\in\bZ^n$ such that 
\eqlabel{DC}{
\|A\mb{q}-\mb{p}\|^{m} < \frac{1}{\|\mb{q}\|^n}\quad\text{for some }\mb{p}\in\bZ^m.
}
\end{cor*}

The above two statements give a rate of approximation which works for all real matrices. However, if we replace the right-hand sides of \eqref{DT} and \eqref{DC} by faster decaying functions of $T$ and $\|\mb{q}\|^n$ respectively, then one can ask sizes of corresponding sets of matrices satisfying the improved systems. Those sets are very well studied in the setting of Dirichlet's Corollary.

For a function $\psi:\bR_{+}\to\bR_{+}$, we say that $A\in M_{m,n}(\bR)$ is $\psi$-\textit{approximable} if there exist infinitely many $\mb{q}\in \bZ^n$ such that \footnote{Here, we follow the definition given in \cite{KM99, KW19} but, in Section 4, we will use the slightly different definition, such as \cite{BV10}, where the inequality $\|A\mb{q}-\mb{p}\| < \psi(\|\mb{q}\|)$ is used instead of \eqref{approx}.\label{fn1}}
\eqlabel{approx}{
\|A\mb{q}-\mb{p}\|^m < \psi(\|\mb{q}\|^n)\quad\text{for some }\mb{p}\in\bZ^m.
}
Denote by $W_{m,n}(\psi)$ the set of $\psi$-approximable matrices in the unit cube $[0,1]^{mn}$. Then the set $W_{m,n}(\psi)$ satisfies the following zero-one law with respect to the Lebesgue measure.

\begin{thm1*}\label{KG}
Given a non-increasing $\psi$, the set $W_{m,n}(\psi)$ has zero (resp. full) Lebesgue measure if and only if the series $\sum_{k}\psi(k)$ converges (resp. diverges).
\end{thm1*}

To distinguish between sizes of null sets, we can consider Hausdorff measure and dimension as the appropriate tools. Since the set $W_{m,n}(\psi)$ is always containing $m(n-1)$-dimensional hyperplanes, we may focus on $s$-dimensional Hausdorff measures with $s>m(n-1)$. The following result was proved by Jarn\'{i}k in 1931 for $n=1$ and \cite{DV97} in general.

\begin{thm}[Jarn\'{i}k]\label{Jarnik} 
Let $\psi$ be a non-increasing function. Then for $s>m(n-1)$,
\eq{
\cH^{s}(W_{m,n}(\psi))=
  \begin{dcases}
    0       & \quad \text{if } \sum_{q=1}^{\infty} q^{m+n-1}\left(\frac{\widehat{\psi}(q)}{q}\right)^{s-m(n-1)}<\infty,\\
    \cH^{s}([0,1]^{mn})  & \quad \text{if } \sum_{q=1}^{\infty} q^{m+n-1}\left(\frac{\widehat{\psi}(q)}{q}\right)^{s-m(n-1)}=\infty,
  \end{dcases} 
}  where $\widehat{\psi}(q)=\psi(q^n)^{\frac{1}{m}}$.
\end{thm}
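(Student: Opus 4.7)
The plan is to treat the two halves of the dichotomy by complementary methods: a direct covering argument for convergence, and the ubiquity framework of Beresnevich--Dickinson--Velani for divergence.

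\textbf{Convergence half.} I would first write $W_{m,n}(\psi)$ as a $\limsup$ set
\eq{
W_{m,n}(\psi)=\bigcap_{Q\geq 1}\bigcup_{\|\mb{q}\|\geq Q}\bigcup_{\mb{p}\in\bZ^m}R(\mb{p},\mb{q}),
}
where $R(\mb{p},\mb{q})=\{A\in[0,1]^{mn}:\|A\mb{q}-\mb{p}\|<\widehat{\psi}(\|\mb{q}\|)\}$. For fixed $\mb{q}$, the constraint on the $i$-th row $\mb{a}_i$ of $A$ cuts out a slab in $[0,1]^n$ of thickness comparable to $\widehat{\psi}(\|\mb{q}\|)/\|\mb{q}\|$ around the hyperplane $\mb{a}_i\cdot\mb{q}=p_i$, so $R(\mb{p},\mb{q})$ is a product of $m$ such slabs and can be covered by $\asymp(\|\mb{q}\|/\widehat{\psi}(\|\mb{q}\|))^{m(n-1)}$ balls of radius $r_{\mb{q}}:=\widehat{\psi}(\|\mb{q}\|)/\|\mb{q}\|$. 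There are $O(\|\mb{q}\|^m)$ admissible $\mb{p}$, so grouping vectors $\mb{q}$ by $\|\mb{q}\|=q$ (which produces $O(q^{n-1})$ vectors each) yields the tail estimate
\eq{
\sum_{q\geq Q}q^{n-1}\cdot q^m\cdot\pa{\frac{q}{\widehat{\psi}(q)}}^{m(n-1)}\cdot\pa{\frac{\widehat{\psi}(q)}{q}}^s=\sum_{q\geq Q}q^{m+n-1}\pa{\frac{\widehat{\psi}(q)}{q}}^{s-m(n-1)}.
}
Convergence of this series, together with the standard Hausdorff--Cantelli lemma, then forces $\cH^s(W_{m,n}(\psi))=0$.

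\textbf{Divergence half.} Here I would pass through the ubiquity formalism. The natural resonant sets are the rational affine subspaces $R_{\mb{p},\mb{q}}:=\{A\in[0,1]^{mn}:A\mb{q}=\mb{p}\}$, each of dimension $m(n-1)$. Dirichlet's Theorem applied with $T=Q^n$ supplies the required ubiquity statement: every $A\in[0,1]^{mn}$ lies within sup-distance $\rho(Q)\asymp Q^{-n/m}$ of some $R_{\mb{p},\mb{q}}$ with $1\leq\|\mb{q}\|\leq Q$. The anisotropic thickening of $R_{\mb{p},\mb{q}}$ by $r_{\mb{q}}=\widehat{\psi}(\|\mb{q}\|)/\|\mb{q}\|$ in the $m$ normal directions recovers $R(\mb{p},\mb{q})$, and since $\dim R_{\mb{p},\mb{q}}=m(n-1)$, this thickening contributes non-trivial $\cH^s$-mass precisely when $s>m(n-1)$. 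The divergence part of the abstract ubiquity theorem then converts the divergence of $\sum q^{m+n-1}(\widehat{\psi}(q)/q)^{s-m(n-1)}$ directly into $\cH^s(W_{m,n}(\psi))=\cH^s([0,1]^{mn})$.

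\textbf{Main obstacle.} The delicate ingredient is verifying the local ubiquity estimate with the sharp normalization when $n\geq 2$: one must produce, at each scale $Q$, enough geometrically distinct resonant subspaces $R_{\mb{p},\mb{q}}$ with $\|\mb{q}\|\leq Q$, rather than many copies of the same subspace arising from parallel $\mb{q}$'s. This requires a quantitative Minkowski-type lattice-point count together with a transference argument controlling the multiplicity of the resonant hyperplanes near a generic $A$. The matching between the anisotropic geometry of the slabs and the isotropic balls required by the abstract framework is what forces the exponent $s-m(n-1)$ to appear and compels the lower bound $s>m(n-1)$; everything above this critical value reduces to the routine machinery, while the case $s\leq m(n-1)$ is degenerate because $W_{m,n}(\psi)$ already contains full $m(n-1)$-dimensional subspaces.
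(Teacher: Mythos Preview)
The paper does not actually prove this theorem: Jarn\'{i}k's theorem is quoted as a known result, attributed to Jarn\'{i}k for $n=1$ and to Dickinson--Velani \cite{DV97} in general, and no argument is supplied. So there is no ``paper's own proof'' to compare against; your sketch is essentially the classical \cite{DV97} strategy (Hausdorff--Cantelli for convergence, ubiquity for divergence), which is indeed the standard route.

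One point worth tightening: the function $\rho(Q)\asymp Q^{-n/m}$ that you extract directly from Dirichlet is only a global covering statement for $\|\mb{q}\|\leq Q$, not the local ubiquity function required by the Beresnevich--Dickinson--Velani machinery. Plugging $\rho(Q)=Q^{-n/m}$ into the ubiquity divergence criterion (with resonant sets of codimension $m$) would not reproduce the series $\sum_q q^{m+n-1}(\widehat{\psi}(q)/q)^{s-m(n-1)}$. What is actually needed is the sharper $\rho(Q)\asymp Q^{-(m+n)/m}$ (possibly with an auxiliary $\omega$-factor when $n\geq 2$), obtained after restricting to $\|\mb{q}\|\in(Q/2,Q]$ and carrying out the lattice-point/variance count you allude to in your ``Main obstacle'' paragraph. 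You clearly anticipate this refinement there, so this is more an imprecision in the exposition than a gap in understanding---but as written the two halves of your divergence sketch are not consistent with each other.
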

Here, $\cH^{s}([0,1]^{mn})$ is infinity for $s<mn$. On the other hand, $\cH^{mn}$ comparable to the $mn$-dimensional Lebesgue measure, hence, Theorem \ref{Jarnik} implies Khintchine-Groshev Theorem.   	 

It is worth mentioning that Jarn\'{i}k's theorem was indeed proved for any dimension functions $f$, not just the functions of the form $f(r):=r^s$ stated in Theorem \ref{Jarnik}, see \cite{DV97}.  Regarding inhomogeneous Diophantine approximation, the analogue of Jarn\'{i}k's theorem for doubly metric case was proved in \cite{HKS20} and for singly metric case in \cite{B04}.

For similar generalizations in the setting of Dirichlet's Theorem, let us give the following definition: for a non-increasing function $\psi:[T_0,\infty)\to\bR_{+}$, where $T_0 >1$ is fixed, we say that $A\in M_{m,n}(\bR)$ is $\psi$-\textit{Dirichlet} if the system
\eq{
\|A\mb{q}-\mb{p}\|^m < \psi(T)\quad\text{and}\quad\|\mb{q}\|^n < T
}
has a nontrivial integral solution for all large enough $T$. Surprisingly, no zero-one law analogous to Khintchine-Groshev Theorem was known until recently when Kleinbock and Wadleigh \cite{KW18} proved a zero-one law on the Lebesgue measure of Dirichlet improvable numbers, that is, $m=n=1$. The Hausdorff measure-theoretic results for Dirichlet non-improvable numbers analogous to Theorem \ref{Jarnik} have also been established in \cite{HKWW18} for a general class of dimension functions $f$ called the essentially sub-linear dimension functions. For the non-essentially sub-linear dimension functions, the relevant results are in \cite{BHS20}. For general $m,n\in\bN$, Kleinbock, Str\"{o}mbergsson, and Yu \cite{KSY21} recently gave sufficient conditions on $\psi$ to ensure that the set of $\psi$-Dirichlet $m\times n$ matrices has zero or full Lebesgue measure.

Now, we focus our attention on inhomogeneous Diophantine approximation replacing the values of a system of linear forms $A\mb{q}$ by those of a system of \textit{affine forms} $\mb{q}\mapsto A\mb{q}+\mb{b}$, where $A\in M_{m,n}(\bR)$ and $\mb{b}\in\R^m$.
Let $\widetilde{M}_{m,n}(\bR):= M_{m,n}(\bR)\times \bR^m$. Following \cite{KW19}, for a non-increasing function $\psi:[T_0,\infty)\to\bR_{+}$, we say that a pair $(A,\mb{b})\in \widetilde{M}_{m,n}(\bR)$ is $\psi$-\textit{Dirichlet} if there exist $\mb{p}\in\bZ^m$ and $\mb{q}\in\bZ^n$ such that
\eqlabel{dirichlet}{
\|A\mb{q}+\mb{b}-\mb{p}\|^m < \psi(T)\quad\text{and}\quad\|\mb{q}\|^n < T
}
whenever $T$ is large enough. Denote by $\widehat{D}_{m,n}(\psi)$ the set of $\psi$-Dirichlet pairs in the unit cube $[0,1]^{mn+m}$. Note that in this definition, the case $\mb{q}=0$ is allowed so that $(A,\mb{b})$ is always $\psi$-Dirichlet for any $\mb{b}\in\bZ^m$.

Recently, Kleinbock and Wadleigh established the following zero-one law for the set $\widehat{D}_{m,n}(\psi)$ with respect to the Lebesgue measure.
\begin{thm}\cite{KW19}\label{KW}
Given a non-increasing $\psi$, the set $\widehat{D}_{m,n}(\psi)$ has zero (resp. full) Lebesgue measure if and only if the series
\eqlabel{KWsum}{
\sum_{j}\frac{1}{\psi(j)j^2}
}
diverges (resp. converges).
\end{thm}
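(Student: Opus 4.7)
The plan is to reduce Theorem \ref{KW} to a Borel--Cantelli analysis of the complement, which is naturally a $\limsup$-type set. For each integer $T\ge T_0$, let $E_T$ denote the set of $(A,\mb{b})\in[0,1]^{mn+m}$ for which the system in \eqref{dirichlet} has no integer solution. Then $(A,\mb{b})\in \widehat{D}_{m,n}(\psi)^c$ iff $(A,\mb{b})\in E_T$ for arbitrarily large $T$, and since $\psi$ is non-increasing it suffices to work with the geometric subsequence $T_k = 2^k$.

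The heart of the argument is the volume estimate $|E_T|\asymp\min(1,\ 1/(T\psi(T)))$. I would derive it via a Dani-type correspondence for inhomogeneous forms. Setting $g_t = \diag{e^{t/m}I_m,\ e^{-t/n}I_n}$, $u_A = \smallmat{I_m & A\\ 0 & I_n}$, and $\Lambda_{A,\mb{b}} = u_A\bZ^{m+n} + \smallmat{\mb{b}\\ \mb{0}}$, the pair $(A,\mb{b})$ is $\psi$-Dirichlet at $T = e^t$ iff the translated affine lattice $g_t\Lambda_{A,\mb{b}}$ meets a fixed box in $\bR^{m+n}$ (the box encoding the bounds $\psi(T)^{1/m}$ and $T^{1/n}$ after the normalization by $g_t$). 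Pushing this into the Haar measure on the space of grids $Y = \ASL_{m+n}(\bZ)\backslash\ASL_{m+n}(\bR)$ and invoking Siegel's mean-value formula yields the estimate. Equivalently, for fixed $A$ one can compute the $\mb{b}$-measure of the bad set as the complement in $\bT^m$ of a union of translated $\psi(T)^{1/m}$-balls centered at $\{-A\mb{q}\bmod\bZ^m:\|\mb{q}\|^n<T\}$, then average over $A$ using overlap/counting of this set.

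Granting the volume estimate, the convergence direction is essentially free. Cauchy condensation gives
\eq{\sum_k\frac{1}{T_k\psi(T_k)} \asymp \sum_{j=1}^\infty\frac{1}{\psi(j)\,j^2},}
so if the KW series converges then $\sum_k|E_{T_k}|<\infty$, and the first Borel--Cantelli lemma yields $|\limsup_k E_{T_k}|=0$. A short monotonicity comparison (using that the definition of $\psi$-Dirichlet for all large $T$ is controlled, up to constants, by the subsequence $T_k$) closes the gap, giving full Lebesgue measure for $\widehat{D}_{m,n}(\psi)$.

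The divergence direction is the main challenge: when $\sum_j 1/(\psi(j)j^2)=\infty$ one has $\sum_k|E_{T_k}|=\infty$ and must promote this to $|\limsup_k E_{T_k}|=1$. My plan is to establish a quasi-independence bound
\eq{|E_{T_k}\cap E_{T_\ell}| \leq C\,|E_{T_k}|\,|E_{T_\ell}|}
for $|k-\ell|$ sufficiently large, apply the Chung--Erd\H{o}s form of the second Borel--Cantelli lemma, and then invoke a zero-one law (invariance of the non-improvable set under $\mb{b}\mapsto\mb{b}+\mb{v}$ for $\mb{v}\in\bZ^m$, together with ergodicity of the $g_t$-flow on $Y$) to upgrade positive measure to full measure. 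The hard part will be the quasi-independence estimate: it is a correlation bound for indicators of shrinking cuspidal targets on $Y$, and proving it cleanly should require exponential mixing of the $g_t$-action together with careful uniformity in the shrinking rate dictated by $\psi$.
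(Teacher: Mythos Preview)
This theorem is not proved in the paper; it is quoted from \cite{KW19} as background. The paper's own contribution is the Hausdorff-measure refinement (Theorems~\ref{thm1} and~\ref{thm2}). In particular, when the authors reach the divergent case $s=mn+m$ in the proof of Theorem~\ref{thm1}, they write ``If $s=mn+m$, then it follows from Theorem~\ref{KW}'' and invoke the result as a black box. So there is no paper-internal proof to compare against.

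That said, your outline is a faithful sketch of the Kleinbock--Wadleigh strategy. The reduction to the grid space $\ASL_d(\bR)/\ASL_d(\bZ)$, the identification of $E_T$ with a super-level set of the height function $\Delta$, the volume asymptotic $|E_T|\asymp 1/(T\psi(T))$ (which matches $e^{-(m+n)z_\psi(t)}$ under the change of variables in Lemma~\ref{KM}), and the Cauchy condensation step are all correct. The convergence direction is then immediate from Borel--Cantelli, and you have correctly isolated the divergence direction as the substantive part. One remark: in \cite{KW19} the divergence side is handled not by proving a bespoke quasi-independence estimate but by appealing to an existing dynamical Borel--Cantelli lemma for the $a_t$-action on the space of grids (effective equidistribution of expanding horospheres gives the needed correlation decay); your plan via exponential mixing is morally the same mechanism, but packaging it through a ready-made dynamical Borel--Cantelli statement saves you from redoing the uniformity analysis by hand.

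It may also be worth noting that the paper does, incidentally, re-derive the \emph{convergence} half of Theorem~\ref{KW} by a different route: the $s=mn+m$ case of the convergent part of Theorem~\ref{thm1} is proved in Section~3 via covering arguments based on the successive-minima function $\lambda_d$, a weak-$L^1$ estimate (Proposition~\ref{Lw1}), and the Hausdorff--Cantelli lemma, rather than by a direct volume computation on the grid space. This is more machinery than the convergence direction needs on its own, but it is what generalises to non-integer $s$.
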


As mentioned in \cite[Section 7]{KW19}, one can naturally ask whether Theorem \ref{KW} can be extended along two directions:
\begin{itemize}
    \item Zero-infinity law for a Hausdorff measure,
    \item Singly metric case ($\mb{b}$ fixed).
\end{itemize}
Although Theorem \ref{KW} provides the Lebesgue measure of the set $\widehat{D}_{m,n}(\psi)$, nothing was known about the Hausdorff dimension of this set. In this article, we give an analogue of Theorem \ref{KW} for the Hausdorff measure by establishing the zero-infinity law analogous to Theorem \ref{Jarnik}. Let us state our main theorem.

\begin{thm}\label{thm1}
Given a decreasing function $\psi$ with $\lim_{T\to\infty}\psi(T)=0$ and $0\leq s\leq mn+m$, the $s$-dimensional Hausdorff measure of $\widehat{D}_{m,n}(\psi)^c$ is given by
\eq{
\cH^{s}(\widehat{D}_{m,n}(\psi)^c)=
  \begin{dcases}
    0       & \quad \text{if } \sum_{q= 1}^{\infty}\frac{1}{\psi(q)q^2}\left(\frac{q^{\frac{1}{n}}}{\psi(q)^{\frac{1}{m}}}\right)^{mn+m-s}<\infty,\\
    \cH^{s}([0,1]^{mn+m})  & \quad \text{if } \sum_{q= 1}^{\infty}\frac{1}{\psi(q)q^2}\left(\frac{q^{\frac{1}{n}}}{\psi(q)^{\frac{1}{m}}}\right)^{mn+m-s}=\infty.
  \end{dcases} 
} Moreover, the convergent case still holds for every non-increasing function $\psi$ without the assumption $\lim_{T\to\infty}\psi(T)=0$.
\end{thm}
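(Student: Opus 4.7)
The plan is to express $\widehat{D}_{m,n}(\psi)^c$ as a $\limsup$ set indexed by integer data, then handle the convergence half by a direct covering via the Hausdorff--Cantelli lemma, and the divergence half by a Mass Transference Principle. Following the dynamical/dual viewpoint of \cite{KW19}, the first step is to derive an integer-parametrised description of the failure of the $\psi$-Dirichlet condition: $(A,\mb{b})\notin \widehat{D}_{m,n}(\psi)$ iff there are infinitely many $\mb{q}\in\bZ^n$, interpretable as best approximations of $A$ up to some scale $T = \|\mb{q}\|^n$, such that every shift $A\mb{q}'+\mb{b}$ with $\|\mb{q}'\|^n<T$ stays at distance $\geq \psi(T)^{1/m}$ from $\bZ^m$. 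This packages $\widehat{D}_{m,n}(\psi)^c$ as $\limsup_{q\to\infty} E_q$, where each $E_q$ is a union of roughly $q^{m+n}$ rectangular tubes in $\widetilde{M}_{m,n}(\bR)$ of approximate $A$-scale $q^{-1/n}$ and $\mb{b}$-scale $\psi(q)^{1/m}$.

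For the convergence half, applying the Hausdorff--Cantelli lemma to this covering gives the conclusion: summing the $s$-th powers of the diameters of the cube-covers of these rectangles at scale $q$, and performing a standard optimisation that distributes $s$ across the thick and thin directions, produces precisely the summand $\frac{1}{\psi(q)q^2}\bigl(q^{1/n}/\psi(q)^{1/m}\bigr)^{mn+m-s}$. Only monotonicity of $\psi$ is used in this argument, which is why the convergence conclusion holds without the hypothesis $\lim_{T\to\infty}\psi(T)=0$.

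For the divergence half, I would invoke the Mass Transference Principle of Beresnevich--Velani, specifically a rectangular version (as in Allen--Beresnevich or Wang--Wu), which upgrades a full-Lebesgue statement for a $\limsup$ of rectangles to a full $\cH^s$-statement for the corresponding shrunken $\limsup$. Theorem \ref{KW} says that $\widehat{D}_{m,n}(\psi)^c$ has full Lebesgue measure precisely when $\sum 1/(\psi(q)q^2)=\infty$, which is the $s=mn+m$ endpoint of the divergence hypothesis. The rectangular MTP then transfers this endpoint to every $s\in[0,mn+m]$; the assumption $\lim_{T\to\infty}\psi(T)=0$ is needed here to guarantee that the rectangles actually shrink, a prerequisite for any $\limsup$-measure theorem of this type.

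The main obstacle I anticipate is the rectangular geometry of the $E_q$. The $A$- and $\mb{b}$-scales are different, reflecting the asymmetric roles of $A$ and $\mb{b}$ in \eqref{dirichlet}, which means the classical ball-based MTP cannot be used directly and one must verify the hypotheses of a rectangular/hyperplane MTP; in particular, a local ubiquity statement for the covering family is required, and this is where Theorem~\ref{KW} enters essentially. Moreover, the integer-parametrised reformulation in Step 1, although phrased in \cite{KW19} via homogeneous dynamics on the space of unimodular lattices, must be translated into a clean combinatorial form suitable for Hausdorff estimation, which is the most delicate bookkeeping step of the argument.
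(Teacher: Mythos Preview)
Your proposal has a genuine gap at the very first step, and it propagates through both halves. You assert that $\widehat{D}_{m,n}(\psi)^c$ can be packaged as $\limsup_{q\to\infty} E_q$ with each $E_q$ a union of roughly $q^{m+n}$ rectangular tubes. But the failure of the $\psi$-Dirichlet condition at scale $T$ reads: \emph{for every} $(\mb{p},\mb{q})$ with $\|\mb{q}\|^n<T$ one has $\|A\mb{q}+\mb{b}-\mb{p}\|^m\geq\psi(T)$. This is a universal quantifier over integer points, so the slice $E_T$ is the \emph{complement} of a union of tubes, not a union of tubes. Neither the Hausdorff--Cantelli lemma nor any form of the Mass Transference Principle applies directly to such a set, and your ``best approximation'' parametrisation does not change this: you still carry the clause ``every shift $A\mb{q}'+\mb{b}$ \ldots\ stays at distance $\geq\psi(T)^{1/m}$'', which is the same intersection.

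What the paper actually does is precisely the step you are missing: it invokes a \emph{transference/duality} argument to convert the universal condition on $(A,\mb{b})$ into an existential one on $A$ alone. For the convergent half, via the Dani correspondence and the elementary inequality $\Delta(a_t\Lambda_{A,\mb{b}})\leq\log\bigl(d\,\lambda_d(a_t\Lambda_A)\bigr)$ together with Mahler's inequality $\lambda_1(\Lambda^*)\lambda_d(\Lambda)\asymp 1$, membership in $\widehat{D}_{m,n}(\psi)^c$ forces the dual lattice of $a_t\Lambda_A$ to have a short vector infinitely often; this \emph{is} a $\limsup$ of tubes in the $A$-variable, and a weak-$L^1$ estimate (via Siegel's integral formula) counts the covering balls. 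For the divergent half, Cassels' transference lemma produces an explicit subset $\limsup_{S}\widehat{W}_{S,\eps}\subset\widehat{D}_{m,n}(\psi)^c$ whose $A$-projection is the genuine $\limsup$-of-hyperplane-neighbourhoods set $W_{n,m}(\Psi_\eps)$; the paper then takes the Allen--Beresnevich mass on that homogeneous set, crosses it with Lebesgue in $\mb{b}$, and applies the Mass Distribution Principle. So the rectangular MTP you invoke is indeed in the background, but only \emph{after} transference has manufactured a set to which it applies; without that step your plan does not get off the ground.
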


On the other hand, Theorem \ref{KW} provides us only the information on Lebesgue measure in the doubly metric case, i.e. it computes Lebesgue measure of the set $\widehat{D}_{m,n}(\psi)\subseteq \widetilde{M}_{m,n}$. A more refined question in inhomogeneous Diophantine approximation is fixing $\mb{b}\in\bR^m$ and asking the analogous question for the slices of $\widehat{D}_{m,n}(\psi)$. For fixed $\mb{b}\in \bR^m$, let $\widehat{D}_{m,n}^{\mb{b}}(\psi):=\{A\in M_{m,n}(\bR):(A,\mb{b})\in\widehat{D}_{m,n}(\psi)\}$. The following theorem answers the question for the singly metric case.

\begin{thm}\label{thm2}
Given a decreasing function $\psi$ with $\lim_{T\to\infty}\psi(T)=0$ and $0\leq s\leq mn$, the $s$-dimensional Hausdorff measure of $\widehat{D}_{m,n}(\psi)^c$ is given by
\eq{
\cH^{s}(\widehat{D}_{m,n}^{\mb{b}}(\psi)^c)=
  \begin{dcases}
    0       & \quad \text{if } \sum_{q= 1}^{\infty}\frac{1}{\psi(q)q^2}\left(\frac{q^{\frac{1}{n}}}{\psi(q)^{\frac{1}{m}}}\right)^{mn-s}<\infty,\\
    \cH^{s}([0,1]^{mn})  & \quad \text{if } \sum_{q= 1}^{\infty}\frac{1}{\psi(q)q^2}\left(\frac{q^{\frac{1}{n}}}{\psi(q)^{\frac{1}{m}}}\right)^{mn-s}=\infty.
  \end{dcases} 
}
for every $\mb{b}\in\bR^m\setminus\bZ^m$. Moreover, the convergent case still holds for every $\mb{b}\in\bR^m$ and every non-increasing function $\psi$ without the assumption $\lim_{T\to\infty}\psi(T)=0$.
\end{thm}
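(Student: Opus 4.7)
The proof of Theorem~\ref{thm2} follows the same overall architecture as that of Theorem~\ref{thm1}, adapted to slices of the affine parameter space obtained by fixing $\mathbf{b}$. My plan is to reformulate $\widehat{D}_{m,n}^{\mathbf{b}}(\psi)^c \cap [0,1]^{mn}$ as a $\limsup$ set at the discrete scales $T_j = c^j$ for some fixed $c>1$, and then to handle the convergent and divergent directions separately via standard Hausdorff measure techniques that must be carried through uniformly in the fixed parameter $\mathbf{b}$.

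For the convergent direction, let $F_j(\mathbf{b}) \subseteq [0,1]^{mn}$ denote the set of $A$ for which the Dirichlet condition \eqref{dirichlet} fails at $T = T_j$. Then $\widehat{D}_{m,n}^{\mathbf{b}}(\psi)^c \cap [0,1]^{mn} \subseteq \limsup_j F_j(\mathbf{b})$. For each $\mathbf{q} \in \mathbb{Z}^n$ with $\|\mathbf{q}\|^n < T_j$ and each admissible $\mathbf{p} \in \mathbb{Z}^m$, the slab $\{A : \|A\mathbf{q}+\mathbf{b}-\mathbf{p}\|^m < \psi(T_j)\}$ has codimension $m$ inside $[0,1]^{mn}$ and short width $\psi(T_j)^{1/m}/\|\mathbf{q}\|$; the set $F_j(\mathbf{b})$ is the complement of the union of these slabs. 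Covering $F_j(\mathbf{b})$ by axis-parallel boxes of short side $\psi(T_j)^{1/m}/T_j^{1/n}$ in the $m$ transverse directions, summing the $s$-Hausdorff contributions, and applying the Hausdorff--Cantelli lemma yields precisely the convergence series in the statement. This step does not use the hypothesis $\mathbf{b} \notin \mathbb{Z}^m$ and remains valid for any non-increasing $\psi$, which explains the final sentence of the theorem.

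For the divergent direction, the target is a Cantor-like nested family $K_1 \supseteq K_2 \supseteq \cdots$ inside $[0,1]^{mn}$, where $K_j$ is a disjoint union of boxes with short sides $\asymp \psi(T_j)^{1/m}/T_j^{1/n}$ and every $A \in K_j$ violates \eqref{dirichlet} at $T_j$. Equipping $K = \bigcap_j K_j$ with its natural Cantor measure $\mu$ and verifying a Frostman bound $\mu(B(A,r)) \leq C\, r^s$ at the relevant scales, the mass distribution principle gives $\cH^s(K) > 0$, and a standard $0$--$1$ law for the $\limsup$ set upgrades this to $\cH^s(\widehat{D}_{m,n}^{\mathbf{b}}(\psi)^c) = \cH^s([0,1]^{mn})$. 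The hypothesis $\mathbf{b} \in \mathbb{R}^m \setminus \mathbb{Z}^m$ enters crucially here: it guarantees that the shifted points $(\mathbf{p}-\mathbf{b})/\|\mathbf{q}\|$ do not collapse onto a rational sublattice, so that enough disjoint surviving subboxes can be placed inside each parent box at every scale with uniform efficiency. The assumption $\psi(T) \to 0$ is used to ensure that successive Cantor stages really refine to arbitrarily small scales.

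The main obstacle I anticipate is precisely the inductive construction in the divergent direction: one must control the distribution of the Dirichlet-witnessing slabs inside a parent box uniformly in $j$, proving a sharp lower bound on the number of surviving children that exactly matches the exponent appearing in the series. In the language of homogeneous dynamics, this is the fiberwise analogue, over a fixed point of the torus $\mathbb{R}^m/\mathbb{Z}^m$, of the shrinking-target estimates on the space of affine unimodular lattices that underlie \cite{KW19}. This fibered counting cannot be imported wholesale from Theorem~\ref{thm1}, which is why Theorem~\ref{thm2} requires a genuinely separate argument rather than a Fubini-type reduction to the doubly metric case.
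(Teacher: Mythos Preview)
The proposal sketches the standard architecture (limsup description, Hausdorff--Cantelli for convergence, Cantor set plus mass distribution for divergence) but omits precisely the ideas that make each direction go through; as written, both contain genuine gaps.

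\textbf{Convergent direction.} Your set $F_j(\mathbf{b})$ is the \emph{intersection} over all $\mathbf{q}$ with $\|\mathbf{q}\|^n<T_j$ of the complements of the slab families, so there is no single choice of ``$m$ transverse directions'' along which to align covering boxes, and a set of small Lebesgue measure can in general still require the trivially maximal number of $r_j$-balls to cover. The paper's argument bridges this gap by a transference step: if $(A,\mathbf{b})$ fails Dirichlet at time $t$ then the grid $a_t\Lambda_{A,\mathbf{b}}$ has no short vector, which forces the \emph{lattice} $a_t\Lambda_A$ to have large $d$-th successive minimum, i.e.\ $\Delta(a_t\Lambda_{A,\mathbf{b}})\le\log(d\,\lambda_d(a_t\Lambda_A))$. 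The function $\lambda_d$ is Lipschitz under $r_j$-perturbations of $A$ (Proposition~\ref{lamuni}/Lemma~\ref{lem1}), so the superlevel set $Z_t$ is stable at scale $r_j$; combined with a weak-$L^1$ bound $\lambda_d^d\in L^{1,w}(X)$ (via Mahler's inequality and Siegel's formula) this yields the sharp covering count $\ll e^{(m+n)(t-z_\psi(t))}$ by $r_j$-balls. Some such passage through the dual/homogeneous lattice is needed; the covering you describe is not justified without it.

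\textbf{Divergent direction.} The paper does not build a Cantor set inside the Dirichlet-failing set directly. Instead it uses Cassels' transference (Lemma~\ref{dual}): if ${}^t\!A$ has infinitely many solutions $\mathbf{x}\in\mathbb{Z}^m$ to $\|{}^t\!A\mathbf{x}\|_{\mathbb{Z}}<d^{-1}\epsilon\,\widetilde\psi(S)$, $\|\mathbf{x}\|<d^{-1}\epsilon S$ subject to the constraint $|\mathbf{b}\cdot\mathbf{x}|_{\mathbb{Z}}>\epsilon$, then $A\in\widehat{D}_{m,n}^{\mathbf{b}}(\psi)^c$. This reduces the problem to a \emph{homogeneous} limsup set $W_{\mathbf{b},\epsilon}$ for ${}^t\!A$ with resonant sets the rational hyperplanes $\{{}^t\!A\mathbf{x}=\mathbf{y}\}$, after which local ubiquity (Theorem~\ref{LUb}) plus Theorem~\ref{UbThm1} give full $\cH^s$-measure. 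The hypothesis $\mathbf{b}\notin\mathbb{Z}^m$ enters through Lemma~\ref{Count}, which ensures that at least half of the candidate $\mathbf{x}$ in each range satisfy $|\mathbf{b}\cdot\mathbf{x}|_{\mathbb{Z}}>\epsilon(\mathbf{b})$, so the restricted ubiquity survives. Your Cantor scheme would instead require, inside every surviving box at every stage, a \emph{lower} bound on the number of subboxes where Dirichlet fails at the next scale; you give no mechanism for this, and none is apparent without first transferring to the homogeneous approximation problem for ${}^t\!A$.
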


Theorem \ref{thm1} and Theorem \ref{thm2} can be applied to compute the Hausdorff dimension of the Dirichlet non-improvable set for some specific functions explicitly. For example, let $\psi_a(q):=q^{-a}$ and $\psi_{a,b}(q):=q^{-a}(\log q)^b$ for $a>0$, $b\ge 0$.
Our results directly gives the following:
For $0< a\leq1$, the Hausdorff dimension of $\widehat{D}_{m,n}^{\mb{b}}(\psi_{a,b})^c$ is $s_a:=mn-\frac{mn(1-a)}{m+na}$ and 
\eq{
\cH^{s_a}(\widehat{D}_{m,n}^{\mb{b}}(\psi_{a,b})^c)=
  \begin{dcases}
    0       & \quad \text{if } b>\frac{m+na}{m+n},\\
    \cH^{s_a}([0,1]^{mn})  & \quad \text{if } b\leq\frac{m+na}{m+n}
  \end{dcases} 
}
for every $\mb{b}\in\bR^m\setminus\bZ^m$. More explicitly, $\cH^{s_a}([0,1]^{mn})=\cH^{mn}([0,1]^{mn})\asymp 1$ if $a=1$, and $\cH^{s_a}([0,1]^{mn})=\infty$ otherwise. For the doubly metric case, the Hausdorff dimension of $\widehat{D}_{m,n}(\psi_{a,b})^c$ is $s_a+m$ and
\eq{
\cH^{s_a+m}(\widehat{D}_{m,n}(\psi_{a,b})^c)=
  \begin{dcases}
    0       & \quad \text{if } b>\frac{m+na}{m+n},\\
    \cH^{s_a+m}([0,1]^{mn+m})  & \quad \text{if } b\leq\frac{m+na}{m+n}.
  \end{dcases} 
}
Also, we can observe that the Hausdorff dimension is always bigger than $mn-n$ for the singly metric case and $mn+m-n$ for the doubly metric case regardless of the choice of $\psi$.

We remark that the above results for $\psi_a$ can be stated in terms of \textit{uniform Diophantine exponents}. We denote by $\wh{w}(A,\mb{b})$ the supremum of the real numbers $w$ for which, for all sufficiently large $T$, the inequalities
$$\|A\mb{q}+\mb{b}-\mb{p}\| < T^{-w} \quad\text{and}\quad\|\mb{q}\| < T$$
have an integral solution $\mb{p}\in\bZ^m$ and $\mb{q}\in\bZ^n$. For further details and references regarding the above notion, see \cite{BL05, B16}. Considering $\psi_{a}$ with $a=\frac{mw}{n}$, we have the following corollary by the definition. 

\begin{cor}\label{exponent}
For any $w>0$,
$$\dim_H\set{(A,\mb{b})\in M_{m,n}(\bR)\times\bR^m: \wh{w}(A,\mb{b})\leq w}=\min\left\{mn+m-\frac{n-mw}{1+w}, mn+m\right\},$$
$$\dim_H\set{A\in M_{m,n}(\bR): \wh{w}(A,\mb{b})\leq w}= \min\left\{mn-\frac{n-mw}{1+w},mn\right\}$$
 for every $\mb{b}\in\bR^m\setminus\bZ^m$. Therefore, for any $0<w\leq\frac{n}{m}$,
$$\dim_H\set{(A,\mb{b})\in M_{m,n}(\bR)\times\bR^m: \wh{w}(A,\mb{b})=w}=mn+m-\frac{n-mw}{1+w},$$
$$\dim_H\set{A\in M_{m,n}(\bR): \wh{w}(A,\mb{b})=w}= mn-\frac{n-mw}{1+w}$$
 for every $\mb{b}\in\bR^m\setminus\bZ^m$.

\end{cor}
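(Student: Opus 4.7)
The plan is to derive Corollary \ref{exponent} as a direct specialization of Theorems \ref{thm1} and \ref{thm2} to the one-parameter family $\psi_a(q) = q^{-a}$, after recasting the uniform exponent condition in Dirichlet form. First I would record the conversion: after substituting $T \mapsto T^n$ in the definition of $\psi_a$-Dirichlet, the pair $(A,\mb{b})$ is $\psi_a$-Dirichlet if and only if for all large $T$ the inequalities $\|A\mb{q}+\mb{b}-\mb{p}\| < T^{-w}$, $\|\mb{q}\| < T$ admit an integer solution, where $a = mw/n$. This yields the two-sided sandwich
\[
\widehat{D}_{m,n}(\psi_{mw/n})^c \;\subseteq\; \{\wh{w} \le w\} \;\subseteq\; \bigcap_{w' > w} \widehat{D}_{m,n}(\psi_{mw'/n})^c,
\]
together with the dual identity $\{\wh{w} \ge w\} = \bigcap_{w' < w} \widehat{D}_{m,n}(\psi_{mw'/n})$.

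Plugging $\psi_a$ into the Hausdorff series of Theorem \ref{thm1} gives the summand $q^{-1+\gamma(s)}$ with $\gamma(s) = (a-1) + (m+an)(mn+m-s)/(mn)$, and the unique root of $\gamma$ is $s_* = mn+m - (n-mw)/(1+w)$ when $a = mw/n \le 1$; for $a > 1$ the series diverges even at $s = mn+m$. Combined with the sandwich above and the continuity of $s_*$ in $w$, this gives $\dim_H\{\wh{w} \le w\} = \min(s_*, mn+m)$, which is the first claim of the corollary.

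For $\{\wh{w} = w\}$ with $0 < w \le n/m$ the upper bound $\dim_H \le s_*$ is immediate, so the content is the matching lower bound, which I would obtain via the refined family $\psi_{a,b}(q) = q^{-a}(\log q)^b$ already used in the discussion before the corollary. A direct computation shows that at $s = s_*$ the series of Theorem \ref{thm1} simplifies to $\sum q^{-1} (\log q)^{-\alpha}$ with $\alpha = b(m+n)/(m(1+w))$, so it converges precisely when $b > (m+na)/(m+n)$. Fixing such a $b$, the convergence case of Theorem \ref{thm1} yields $\cH^{s_*}(\widehat{D}_{m,n}(\psi_{a,b})^c) = 0$, while at $b = 0$ the divergence case yields $\cH^{s_*}(\widehat{D}_{m,n}(\psi_a)^c) = \cH^{s_*}([0,1]^{mn+m})$. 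Since being $\psi_{a,b}$-Dirichlet implies being $\psi_{a'}$-Dirichlet for every $a' < a$, one has $\widehat{D}_{m,n}(\psi_{a,b}) \subseteq \{\wh{w} \ge w\}$, whence
\[
\widehat{D}_{m,n}(\psi_a)^c \cap \widehat{D}_{m,n}(\psi_{a,b}) \;\subseteq\; \{\wh{w} = w\}
\]
inherits full Hausdorff $s_*$-measure, giving $\dim_H\{\wh{w} = w\} \ge s_*$. The singly metric statements follow verbatim with Theorem \ref{thm2} in place of Theorem \ref{thm1}.

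The main obstacle I anticipate is the bookkeeping in this critical-case analysis: verifying that the logarithmic exponent aggregates to exactly $b(m+n)/(m(1+w))$, so that the convergence threshold matches the stated boundary $b > (m+na)/(m+n)$ without an off-by-one error between the contributions from $1/\psi_{a,b}(q)$ and $1/\psi_{a,b}(q)^{1/m}$. A minor secondary point is the sup/attainment subtlety in the definition of $\wh{w}$, which evaporates once $\{\wh{w} \le w\}$ and $\{\wh{w} \ge w\}$ are written as countable intersections over rational $w'$.
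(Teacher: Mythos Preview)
Your proposal is correct and follows the paper's approach: the paper's entire argument is the single line ``Considering $\psi_{a}$ with $a=\frac{mw}{n}$, we have the following corollary by the definition,'' and you have faithfully unpacked this by translating $\wh{w}\le w$ into the $\psi_a$-Dirichlet condition and reading off the critical exponent from Theorems~\ref{thm1} and~\ref{thm2}.

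One remark on the level-set lower bound: your route via $\psi_{a,b}$ works (and your logarithmic exponent $\alpha=b(m+n)/(m(1+w))$ is correct, matching the threshold $b>(m+na)/(m+n)$ stated before the corollary), but it is slightly more than necessary. Since $\{\wh{w}<w\}=\bigcup_{w'<w,\,w'\in\bQ}\{\wh{w}\le w'\}$ and each $\{\wh{w}\le w'\}\subseteq\widehat{D}_{m,n}(\psi_{mw''/n})^c$ for any $w'<w''<w$, the convergent case of Theorem~\ref{thm1} applied to $\psi_{a'}$ with $a'<a$ already gives $\cH^{s_*}(\{\wh{w}\le w'\})=0$, hence $\cH^{s_*}(\{\wh{w}<w\})=0$ by countable subadditivity; combined with $\cH^{s_*}(\widehat{D}_{m,n}(\psi_a)^c)=\cH^{s_*}([0,1]^{mn+m})$ this yields $\dim_H\{\wh{w}=w\}\ge s_*$ without invoking the logarithmic family. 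This is presumably what the paper's ``Therefore'' is pointing to, but your argument is equally valid.
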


The structure of this paper is as follows: In Section 2, we introduce some preliminaries including ubiquitous systems and reduce the inhomogeneous Diophantine approximation problem to a shrinking target problem in the space of grids in $\bR^{m+n}$. In Section 3 and Section 4, we prove the convergent part and the divergent part of the main theorem, respectively. In both of the proofs of the convergence and the divergence, our main tool is a duality phenomenon between the homogeneous Diophantine approximation and the inhomogeneous Diophantine approximation, which is called Transference Principle (see \cite{Cas57, BL05}). Roughly speaking, $^{t}A\in M_{m,n}(\bR)$ is well-approximable if and only if $(A,\mb{b})\in M_{m,n}(\bR)\times\bR^m$ is Dirichlet non-improvable for most of $\mb{b}\in\bR^m$. In Section 3, we count the number of covering balls following the method used in \cite{KKLM} based on this observation. In Section 4, we use mass distributions on some well-approximable sets for the doubly metric case and establish a local ubiquity with an appropriate ubiquitous function for the singly metric case. Finally, in Section 5, we discuss some concluding remarks and possible open questions.

\vspace{5mm}
\tb{Acknowledgments}. 
We would like to express our gratitude to Seonhee Lim for introducing us to the subject and for her constant help and encouragement. We also would like to thank Yann Bugeaud, Jiyoung Han, and Dmitry Kleinbock for providing helpful comments. We are deeply indebted to the referee for pointing out that the proof of Lemma 4.6 in the original manuscript was wrong. 

TK was supported by Samsung Science and Technology Foundation under Project No. SSTF-BA1601-03, and the National Research Foundation of Korea under Project Number NRF-2020R1A2C1A01011543, and NRF-2021R1A6A3A13039948. WK was supported by the Korea Foundation for Advanced Studies.

\section{Preliminaries}

\subsection{Hausdorff measure and auxiliary lemmas}
Below we give a brief introduction to Hausdorff measure and dimension. For further details, see \cite{Fal}.

Let $E$ be a subset of a Euclidean space $\bR^k$. For $\del>0$, a cover $\cC$ of $E$ is called a $\del$-\textit{cover} of $E$ if $\text{diam}(C)\leq\del$ for all $C\in\cC$. For $0\leq s\leq k$, let
\eq{
\cH_{\del}^{s}(E)=\inf\sum_{C\in\cC}\text{diam}(C)^s,
}
where the infimum is taken over all finite or countable $\del$-cover $\cC$ of $E$. Then the $s$-\textit{dimensional Hausdorff measure} of a set $E$ is defined by
\eq{
\cH^{s}(E)=\lim_{\del\to 0}\cH_{\del}^{s}(E).
}
Finally, the \textit{Hausdorff dimension} of $E$ is given by
\eq{
\text{dim}_{H}(E)=\inf\{s\geq0:\cH^{s}(E)=0\}.
}

The following principle commonly known as the Mass Distribuiton Principle \cite[\S 4.1]{Fal} will be used to show the divergent part of Theorem \ref{thm1}.
\begin{lem}[Mass Distribution Principle]\label{MDP}
Let $\mu$ be a probability measure supported on $F\subset\bR^k$. Suppose there are positive constants $c>0$, $r_0>0$, and $0\leq s\leq k$ such that
\eq{
\mu(B)\leq cr^s
}
for any ball $B$ with radius $r\leq r_0$. If $E$ is a subset of $F$ with $\mu(E)=\lambda > 0$ then $\cH^{s}(E)\geq\lambda/c$.
\end{lem}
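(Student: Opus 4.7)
The plan is to prove this by a direct, essentially one-line argument using the definition of Hausdorff measure together with the ball hypothesis on $\mu$. The result is classical, and the only content is to translate a measure bound on balls into a measure bound on arbitrary small-diameter sets, and then compare this with the definition of $\cH^s_\delta$.

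First, I would fix any $\delta$ with $0<\delta<r_0$ and take an arbitrary finite or countable $\delta$-cover $\set{C_i}$ of $E$. For each nonempty $C_i$ pick a point $x_i\in C_i$; then $C_i\subseteq B(x_i,\on{diam}(C_i))$, and since $\on{diam}(C_i)\leq\delta<r_0$, the hypothesis gives
\eq{
\mu(C_i)\;\leq\;\mu\bigl(B(x_i,\on{diam}(C_i))\bigr)\;\leq\;c\,\on{diam}(C_i)^s.
}
(If one prefers to work with closed balls of radius equal to the diameter, the bound $cr^s$ applies verbatim; a harmless constant factor would appear if one insisted on radius $\tfrac12\on{diam}(C_i)$, and this does not affect the conclusion up to renaming $c$.)

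Next, I would use the probability measure bound $\mu(E)=\lambda$ and countable subadditivity to obtain
\eq{
\lambda\;=\;\mu(E)\;\leq\;\sum_i \mu(C_i)\;\leq\;c\sum_i \on{diam}(C_i)^s.
}
Taking the infimum over all $\delta$-covers of $E$ yields $\lambda\leq c\,\cH^s_\delta(E)$, i.e.\ $\cH^s_\delta(E)\geq\lambda/c$. Since this bound is uniform in $\delta\in(0,r_0)$, letting $\delta\to 0$ gives $\cH^s(E)\geq\lambda/c$, as required.

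There is no real obstacle here: the only subtlety is the passage from arbitrary small-diameter sets to balls, which is handled by enclosing each $C_i$ in a ball of radius $\on{diam}(C_i)$ centered at any of its points. The argument does not use any structure of $F$, $E$, or $\mu$ beyond what is stated; in particular, neither Borel regularity nor any separability of the cover is needed beyond the standard fact that Hausdorff measure is defined via countable covers.
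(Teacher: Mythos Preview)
Your argument is correct and is exactly the standard proof of the Mass Distribution Principle. The paper does not give its own proof of this lemma but simply cites \cite[\S4.1]{Fal}, where the same argument appears; so there is nothing further to compare.
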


We state the Hausdorff measure version of the Borel-Cantelli lemma \cite[Lemma 3.10]{BD99} which will allow us to estimate the Hausdorff measure
of the convergent part of Theorem \ref{thm1} and Theorem \ref{thm2}. 
\begin{lem}[Hausdorff-Cantelli]\label{hcan}
Let $\{B_i\}_{i\geq1}$ be a sequence of measurable sets in $\bR^k$ and suppose that for some $0\leq s\leq k$,
\eq{
\sum_{i}\textup{diam}(B_i)^s< {\infty}.
}
Then 
\eq{
\cH^{s}(\limsup_{i\to\infty}B_i)=0.
}
\end{lem}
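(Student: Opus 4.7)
The plan is to prove this directly from the definition of Hausdorff measure, exploiting the fact that the tail sums of a convergent series vanish. This is essentially the Hausdorff-analogue of the usual Borel--Cantelli argument, where summability of measures is replaced by summability of diameters raised to the power $s$.

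First, I would set $\delta_N := \sup_{i \geq N} \mathrm{diam}(B_i)$. Since the series $\sum_i \mathrm{diam}(B_i)^s$ converges, its terms tend to zero, hence $\mathrm{diam}(B_i) \to 0$ and consequently $\delta_N \to 0$ as $N \to \infty$ (for $s > 0$; the case $s = 0$ forces $B_i$ to be eventually empty, and the conclusion is trivial).

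Next, for each $N$ I would use the standard inclusion
\eq{
\limsup_{i \to \infty} B_i \;=\; \bigcap_{N \geq 1} \bigcup_{i \geq N} B_i \;\subseteq\; \bigcup_{i \geq N} B_i,
}
so that $\{B_i\}_{i \geq N}$ is a countable $\delta_N$-cover of $\limsup_{i\to\infty} B_i$. By the definition of $\cH^{s}_{\delta_N}$ this yields
\eq{
\cH^{s}_{\delta_N}\!\left(\limsup_{i\to\infty} B_i\right) \;\leq\; \sum_{i \geq N} \mathrm{diam}(B_i)^s.
}
The right-hand side is the tail of a convergent series and hence tends to $0$ as $N \to \infty$. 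Since $\delta_N \to 0$, letting $N \to \infty$ and invoking the definition $\cH^{s} = \lim_{\delta \to 0} \cH^{s}_\delta$ gives $\cH^{s}(\limsup_{i\to\infty} B_i) = 0$.

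There is no real obstacle here: the only mild subtlety is ensuring the cover one uses has diameters tending to $0$, which is automatic from summability. The argument is entirely standard and mirrors the classical Borel--Cantelli lemma, with Lebesgue measure replaced by the pre-measure $\cH^{s}_\delta$.
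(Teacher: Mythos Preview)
Your proof is correct and is the standard direct argument from the definition of Hausdorff measure; the paper itself does not supply a proof but merely cites \cite[Lemma 3.10]{BD99}. The handling of the $s=0$ edge case and the observation that summability forces $\operatorname{diam}(B_i)\to 0$ (so the tail covers are genuine $\delta$-covers with $\delta\to 0$) are exactly the points needed.
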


\subsection{Homogeneous dynamics}
Our argument is based on the Dani correspondence, which forms a connection between Diophantine approximation and homogeneous dynamics. The classical Dani correspondence for homogeneous Diophantine approximation dates back to \cite{D85} (See also \cite{KM99}). The analogous correspondence between inhomogeneous Diophantine approximation and the dynamics in the space of grids have been used in \cite{Kl99, Sha11, ET11, LSS19, GV}. In this section, we introduce the space of grids in $\bR^{m+n}$ and the diagonal flow on this space. For $d=m+n$, let 
\eq{
G_{d}=SL_{d}(\bR)\quad\text{and}\quad\widehat{G}_{d}=ASL_{d}(\bR)=G_{d}\rtimes\bR^d,
}
and let
\eq{
\Gamma_{d}=SL_{d}(\bZ)\quad\text{and}\quad\widehat{\Gamma}_{d}=ASL_{d}(\bZ)=\Gamma_{d}\rtimes\bZ^d.
}
Elements of $\widehat{G}_{d}$ will be denoted by $\langle g,\mb{w} \rangle$, where $g\in G_d$ and $\mb{w}\in\bR^d$.
Denote by $X_d=G_{d} / \Gamma_{d}$ the space of unimodular lattices in $\bR^d$ and denote by $\widehat{X}_d=\widehat{G}_{d} / \widehat{\Gamma}_{d}$ the space of unimodular grids, i.e. affine shifts of unimodular lattices in $\bR^d$. For simplicity, let $G:=G_d$, $X:=X_d$ and denote by $m_X$ the Haar probability measure on $X_d$.
For $t\in\bR$, let
\eq{
a_t:=\text{diag}(e^{t/m},\dots,e^{t/m},e^{-t/n},\dots,e^{-t/n}).
}
Let us denote by 
\eq{u_{A}:= \left(\begin{matrix}
I_m & A \\ 
0 & I_n \\
\end{matrix}\right)\in G_d
\quad\text{and}\quad 
u_{A,\mb{b}}:=\left\langle
\left(\begin{matrix}
I_m & A \\ 
0 & I_n \\
\end{matrix}\right),
\left(\begin{matrix}
\mb{b} \\ 
0 \\
\end{matrix}\right)
\right\rangle\in\widehat{G}_d
}
for $A\in M_{m,n}(\bR)$ and $(A,\mb{b})\in\widetilde{M}_{m,n}(\bR)$.
Let us also denote by
\eq{
\Lam_A := u_A\Z^d\in X \quad\text{and}\quad\Lam_{A,\mb{b}} := u_{A,\mb{b}}\Z^d \in \widehat{X}_d,
}
where $u_{A,\mb{b}}\Z^d = \left\{\left(\begin{matrix}
A\mb{q}+\mb{b}-\mb{p}\\
\mb{q}\\
\end{matrix}\right): \mb{p}\in\bZ^m, \mb{q}\in\bZ^n
\right\}$.
The \textit{expanding horospherical subgroup} of $G_d$ with respect to $\set{a_t:t>0}$ is given by $H:=\set{u_A: A\in M_{m,n}(\bR)}$. On the other hand, the \textit{nonexpanding horospherical subgroup} of $G_d$ with respect to $\set{a_t:t>0}$ is given by $$\tilde{H}:=\set{\left(\begin{matrix}
P & 0\\
R & Q\\
\end{matrix}\right): P\in M_{m,m}(\bR), Q\in M_{n,n}(\bR), R\in M_{n,m}(\bR), \det(P)\det(Q)=1}.$$
Note that $\tilde{H}$ is the complementary subgroup to $H$. We denote by $m_H$ and $m_{\tilde{H}}$ the left-invariant Haar measure on $H$ and $\tilde{H}$, respectively.

Let $\mb{d}$ be a right invariant metric on $G$. We can take $\mb{d}$ to satisfy $||g-id||\leq\mb{d}(g,id)$ for $g$ in the sufficiently small ball $B_r^{G}(id)$, where $||\cdot||$ is the supremum norm on $M_{d,d}(\bR)$. This metric induces metrics on $H,\tilde{H}$, and $X$ by restriction. We let $B_r^{G}(id),B_r^{H}(id),B_r^{\tilde{H}}(id)$, and $B_r^{X}(id)$ denote the open ball in $G,H,\tilde{H}$, and $X$ of radius $r$ centered at the identity, respectively.

Following \cite{KW19}, we define the functions $\Del:\widehat{X}_d \to [-\infty,\infty)$ by
\eq{
\Del(\Lam):=\log\inf_{\mb{v}\in\Lam}\|\mb{v}\|,
}
which can be considered as the logarithm of a height function.

\begin{lem}\label{KM}\cite[Lemma 8.3]{KM99}
Let $m,n\in\bN$ and $T_0\in\R_{+}$ be given. Suppose $\psi:[T_0,\infty)\to\R_{+}$ is a continuous, non-increasing function. Then there exists a unique continuous function
\eq{
z=z_{\psi}:[t_0,\infty)\to\R,
}
where $t_0:=\frac{m}{m+n}\log{T_0}-\frac{n}{m+n}\log{\psi(T_0)}$, such that
\begin{enumerate}
	\item\label{p1} the function $t\mapsto t+nz(t)$ is strictly increasing and unbounded;
	\item\label{p2} the function $t\mapsto t-mz(t)$ is nondecreasing;
	\item\label{p3} $\psi(e^{t+nz(t)})=e^{-t+mz(t)}$ for all $t\geq t_0$.
\end{enumerate}
\end{lem}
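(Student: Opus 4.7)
The approach is to carry out an explicit change of coordinates. Condition (3) combined with (1) is really a $2\times 2$ linear system in disguise: writing $T = e^{t+nz(t)}$ and $\psi(T) = e^{-t+mz(t)}$ and solving for $(t, z(t))$ in terms of $(\log T, \log \psi(T))$ yields
\begin{equation*}
t \;=\; \frac{m \log T - n \log \psi(T)}{m+n}, \qquad z \;=\; \frac{\log T + \log \psi(T)}{m+n}.
\end{equation*}
This motivates defining $\phi(T) := (m \log T - n \log \psi(T))/(m+n)$ on $[T_0, \infty)$ and constructing $z$ by inverting $\phi$.

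The first step is to verify that $\phi:[T_0,\infty) \to [t_0, \infty)$ is a homeomorphism. Continuity is inherited from $\psi$, and $\phi(T_0) = t_0$ by the definition of $t_0$. Strict monotonicity holds because $\log T$ is strictly increasing while $-\log \psi(T)$ is non-decreasing (since $\psi$ is non-increasing), so the positive linear combination $\phi$ is strictly increasing. Finally $\phi(T) \to \infty$ as $T \to \infty$ since $\log T \to \infty$ and $-\log\psi(T) \ge -\log \psi(T_0)$. Hence $\phi^{-1}:[t_0,\infty) \to [T_0,\infty)$ exists and is a continuous strictly increasing function.

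Next, I would define $z(t) := \bigl(\log \phi^{-1}(t) + \log \psi(\phi^{-1}(t))\bigr)/(m+n)$. A direct computation using $\phi(\phi^{-1}(t)) = t$ shows $t + nz(t) = \log \phi^{-1}(t)$ and $t - mz(t) = -\log\psi(\phi^{-1}(t))$. Property (1) is then the statement that $t \mapsto \log\phi^{-1}(t)$ is strictly increasing and unbounded, which is immediate from the properties of $\phi^{-1}$. Property (2) reduces to monotonicity of $-\log(\psi \circ \phi^{-1})$, which holds because $\phi^{-1}$ is increasing and $\psi$ is non-increasing. Property (3) follows by exponentiating the identity $t - mz(t) = -\log\psi(\phi^{-1}(t))$ and using $\phi^{-1}(t) = e^{t+nz(t)}$.

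For uniqueness, suppose $\tilde z$ is any continuous function on $[t_0, \infty)$ satisfying (1)--(3), and set $\widetilde T(t) := e^{t + n\tilde z(t)}$, which is well-defined and strictly increasing by (1). Condition (3) reads $\psi(\widetilde T(t)) = e^{-t + m \tilde z(t)}$, and solving the same $2\times 2$ linear system as above forces $t = \phi(\widetilde T(t))$, so $\widetilde T(t) = \phi^{-1}(t)$ and consequently $\tilde z = z$. The only subtle point I anticipate is ensuring strict monotonicity of $\phi$ without assuming $\psi$ itself is strictly decreasing; this is handled by the fact that the $\log T$ term alone supplies the strict increase, so no deeper obstacle appears and no compactness or fixed-point argument is required.
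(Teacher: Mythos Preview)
Your argument is correct. The paper does not supply its own proof of this lemma; it is quoted verbatim from \cite[Lemma~8.3]{KM99}, so there is nothing in the present paper to compare against. Your construction---defining $\phi(T)=\frac{m\log T-n\log\psi(T)}{m+n}$, showing it is a strictly increasing homeomorphism onto $[t_0,\infty)$, and then setting $z(t)=\frac{\log\phi^{-1}(t)+\log\psi(\phi^{-1}(t))}{m+n}$---is exactly the change of variables that underlies the original proof in \cite{KM99}, and your verification of (1)--(3) and uniqueness is clean and complete. The only implicit point worth making explicit in the uniqueness step is that for condition~(3) to be well-posed one already needs $e^{t+n\tilde z(t)}\in[T_0,\infty)$, after which $\phi(e^{t+n\tilde z(t)})=t$ forces $\tilde z=z$; you handle this correctly.
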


The following lemma reduces the inhomogeneous Diophantine approximation problem to the shrinking target problem on the space of grids.
\begin{lem}\label{Dani}\cite{KM99, KW19}
Let $\psi:[T_0,\infty)\to\R_{+}$ be a non-increasing continuous function, and let $z=z_{\psi}$ be the function associated to $\psi$ by Lemma \ref{KM}. Then $(A,\mb{b})\in \widehat{D}_{m,n}(\psi)$ if and only if $\Del(a_t\Lam_{A,\mb{b}})<z_{\psi}(t)$ for all sufficiently large $t$.
\end{lem}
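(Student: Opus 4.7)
The plan is to convert the Dirichlet condition \eqref{dirichlet} into a geometric statement about short vectors in the grid $\Lam_{A,\mb{b}}$ under the diagonal flow, and then to match quantifiers using the reparametrization $t\leftrightarrow T$ supplied by Lemma \ref{KM}. Every vector of $\Lam_{A,\mb{b}}=u_{A,\mb{b}}\Z^d$ has the form $\mb{v}=\smallmat{A\mb{q}+\mb{b}-\mb{p}\\ \mb{q}}$ for some $\mb{p}\in\Z^m$ and $\mb{q}\in\Z^n$, and the first step is simply to compute
\[
\|a_t\mb{v}\|=\max\bigl(e^{t/m}\|A\mb{q}+\mb{b}-\mb{p}\|,\ e^{-t/n}\|\mb{q}\|\bigr),
\]
using the fact that $a_t$ rescales the top $m$ coordinates by $e^{t/m}$ and the bottom $n$ coordinates by $e^{-t/n}$.

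For the next step I would carry out the change of variable $T=T(t):=e^{t+nz(t)}$ for $t\geq t_0$. By property (\ref{p3}) of Lemma \ref{KM}, $\psi(T)=e^{-t+mz(t)}$, so the conjunction $\|A\mb{q}+\mb{b}-\mb{p}\|^m<\psi(T)$ and $\|\mb{q}\|^n<T$ is equivalent to the single inequality $\|a_t\mb{v}\|<e^{z(t)}$. Because $\Lam_{A,\mb{b}}$ is a discrete subset of $\R^d$, the infimum defining $\Del(a_t\Lam_{A,\mb{b}})$ is attained; hence the existence of some $\mb{v}\in\Lam_{A,\mb{b}}$ with $\|a_t\mb{v}\|<e^{z(t)}$ is equivalent to $\Del(a_t\Lam_{A,\mb{b}})<z_\psi(t)$.

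The final step is to match the ``for all sufficiently large'' quantifiers. Property (\ref{p1}) of Lemma \ref{KM} tells us that $t\mapsto t+nz(t)$, and therefore $t\mapsto T(t)$, is continuous, strictly increasing and unbounded on $[t_0,\infty)$, so it is a bijection between the tails of $[t_0,\infty)$ and $[T_0,\infty)$. Combining this with the two previous steps proves the lemma. Since essentially everything is just unwinding definitions, I expect no substantive obstacle; the only point requiring a modicum of care is checking that strict inequalities line up in both directions, which follows from discreteness of $\Lam_{A,\mb{b}}$.
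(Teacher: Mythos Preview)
Your argument is correct and is precisely the standard Dani correspondence computation: the paper does not supply its own proof of this lemma but cites it from \cite{KM99, KW19}, and your unwinding of the definitions via the substitution $T=e^{t+nz(t)}$, $\psi(T)=e^{-t+mz(t)}$ is exactly the argument found there. One minor remark: you do not actually need discreteness of $\Lam_{A,\mb{b}}$ to pass from $\inf_{\mb{v}}\|a_t\mb{v}\|<e^{z(t)}$ to the existence of a witness $\mb{v}$, since a strict upper bound on an infimum already yields such a witness; your proof goes through without that observation.
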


\begin{rem}\label{Alt}
In other words, Lemma \ref{Dani} means that 
$$\widehat{D}_{m,n}(\psi)^c=\limsup_{t\to\infty}\set{(A,\mb{b}): \Del(a_t\Lam_{A,\mb{b}})\geq z_{\psi}(t)},$$ $$\widehat{D}^{\mb{b}}_{m,n}(\psi)^c=\limsup_{t\to\infty}\set{A: \Del(a_t\Lam_{A,\mb{b}})\geq z_{\psi}(t)}.$$
Here, the limsup sets are taken for real values $t\in\bR$. However, in the proof of the convergent part, we are going to work with limsup sets taken for $t\in\bN$ to apply the Hausdorff-Cantelli lemma. Thus, in the Section 3, we will use the following alternative: there exists a constant $C_0>0$ satisfying
$$\widehat{D}_{m,n}(\psi)^c\subseteq\limsup_{t\to\infty, t\in\bN}\set{(A,\mb{b}): \Del(a_t\Lam_{A,\mb{b}})\geq z_{\psi}(t)-C_0},$$ $$\widehat{D}^{\mb{b}}_{m,n}(\psi)^c\subseteq\limsup_{t\to\infty,t\in\bN}\set{A: \Del(a_t\Lam_{A,\mb{b}})\geq z_{\psi}(t)-C_0}.$$
This alternative holds since $z_{\psi}$ is uniformly continuous by Lemma \ref{KM} and $\Del$ is uniformly continuous on the set $\Del^{-1}([z,\infty])$ for any $z\in\bR$ (\cite[Lemma 2.1]{KW19}).
\end{rem}

\begin{lem}\label{dualfg}
Let $\psi:[T_0,\infty)\to\R_{+}$ be a non-increasing continuous function, and let $z=z_{\psi}$ be the function associated to $\psi$ by Lemma \ref{KM}. For $0\leq s\leq mn$, we have 
\eq{
\sum_{q=\lceil T_0\rceil}^{\infty}\frac{1}{\psi(q)q^2}\left(\frac{q^{\frac{1}{n}}}{\psi(q)^{\frac{1}{m}}}\right)^{mn-s}<\infty\iff
\sum_{t=\lceil t_0\rceil}^{\infty}e^{-(m+n)\left(z(t)-\frac{mn-s}{mn}t\right)}<\infty
.}
\end{lem}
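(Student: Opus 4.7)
My plan is to reduce each of the two series to a common integral via monotonicity and a change of variables, then compare the two integrals through an integration by parts that exploits the unboundedness of $f(t):=t+nz(t)$.

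\emph{Step 1 (change-of-variables identity).} Denote the $q$-summand by $\phi(q)$ and the $t$-summand by $\chi(t)$. Using $T(t):=e^{t+nz(t)}$ together with property (3) of Lemma \ref{KM}, a direct computation yields
\[ \chi(t)=T(t)\,\phi(T(t)), \qquad T'(t)=T(t)\,f'(t), \]
where $f(t):=t+nz(t)$, so that under the substitution $q=T(t)$ one has $\phi(q)\,dq = \chi(t)f'(t)\,dt$.

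\emph{Step 2 (comparing sums with integrals).} Properties (1) and (2) of Lemma \ref{KM} show that $z$ is Lipschitz with $z'(t)\in[-1/n,1/m]$ almost everywhere, so the ratio $\chi(t+1)/\chi(t)$ remains in a bounded subinterval of $(0,\infty)$, giving $\sum_{t\ge \lceil t_0\rceil}\chi(t)\asymp\int_{t_0}^\infty \chi(t)\,dt$. Writing $\phi(q)=q^{m-2-s/n}\psi(q)^{s/m-n-1}$ and noting that the exponent $s/m-n-1$ is negative (since $s\le mn$), the factor $\psi(q)^{s/m-n-1}$ is non-decreasing; combined with the slow variation of $q^a$ on unit intervals this yields $\sum_{q\ge\lceil T_0\rceil}\phi(q)\asymp\int_{T_0}^\infty \phi(q)\,dq$. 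Thanks to Step 1, the problem reduces to proving
\[ \int_{t_0}^\infty \chi(t)f'(t)\,dt<\infty \iff \int_{t_0}^\infty \chi(t)\,dt<\infty. \]

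\emph{Step 3 (main step: integration by parts and Gronwall).} The bound $0\le f'\le(m+n)/m$ immediately gives ``$\int\chi\,dt<\infty\Rightarrow\int\chi f'\,dt<\infty$''. For the converse, differentiating $\log\chi(t)=-(m+n)z(t)+(m+n)\frac{mn-s}{mn}t$ and using $z'=(f'-1)/n$ gives
\[ (\log\chi)'(t)=C'-\frac{m+n}{n}f'(t), \qquad C':=\frac{(m+n)(m(n+1)-s)}{mn}>0, \]
where the strict positivity of $C'$ crucially uses $s\le mn$. Substituting $f'=\frac{n}{m+n}(C'-(\log\chi)')$ and integrating produces the key identity
\[ \int_{t_0}^T \chi(t)f'(t)\,dt=\frac{n}{m+n}\Bigl[C'\int_{t_0}^T\chi(t)\,dt-\chi(T)+\chi(t_0)\Bigr]. \]
If $\int\chi f'\,dt<\infty$ but $I(T):=\int_{t_0}^T\chi(t)\,dt\to\infty$, this identity yields the differential inequality $I'(T)=\chi(T)\ge C'I(T)-K$ for a constant $K$; a Gronwall comparison then gives $\chi(T)\gtrsim e^{C'T}$ for large $T$, whence $\int\chi(t)f'(t)\,dt\gtrsim\int e^{C't}f'(t)\,dt$. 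The finiteness of this last integral, together with $e^{-C't}\le e^{-C't_0}$ for $t\ge t_0$, forces $\int_{t_0}^\infty f'(t)\,dt<\infty$, i.e., $f$ is bounded---contradicting the unboundedness of $f$ asserted in property (1) of Lemma \ref{KM}.

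\emph{Main obstacle.} The heart of the argument is Step 3, which uses both the strict positivity of $C'$ (ensured by the range $s\le mn$) and the unboundedness of $f$. The Lipschitz regularity of $z$, and hence the absolute continuity of $\chi$, make the integration by parts legitimate.
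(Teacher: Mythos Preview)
Your proof is correct and follows the same overall plan as the paper: replace the sums by integrals, use the change of variables $q=e^{t+nz(t)}$, and close with an integration by parts. The paper introduces $\lambda=t+nz(t)$ and $P(\lambda)=-\log\psi(e^{\lambda})$, writes $dt=\frac{m}{m+n}\,d\lambda+\frac{n}{m+n}\,dP$, and exploits the non-negativity of both pieces: the implication $\int\chi\,dt<\infty\Rightarrow\int\phi\,dq<\infty$ is then immediate, while the converse follows after an integration by parts once one checks that the boundary term $\lim_{\lambda\to\infty}e^{-(1-\frac{mn-s}{n})\lambda+(1+\frac{mn-s}{m})P(\lambda)}$ vanishes when the $\lambda$-integral converges.

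Your key identity $\int_{t_0}^{T}\chi f'\,dt=\frac{n}{m+n}\bigl[C'\int_{t_0}^{T}\chi\,dt-\chi(T)+\chi(t_0)\bigr]$ is precisely the same integration by parts written in $t$-coordinates. The genuine difference is how you handle the hard direction: rather than showing the boundary term $\chi(T)$ tends to zero, you feed the identity into a Gronwall inequality and reach a contradiction with the unboundedness of $f$ from Lemma~\ref{KM}(1). This is a slightly more dynamical route; it buys you a uniform treatment of the full range $0\le s\le mn$, whereas the paper first sets aside the range $s\le mn-n$ (where both sums are trivially infinite) because the coefficient $1-\frac{mn-s}{n}$ appearing in its integration by parts is positive only for $s>mn-n$. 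Conversely, the paper's boundary-term argument is shorter once one is in $\lambda$-coordinates, since the lower bound $(\log h)'\ge -(1-\frac{mn-s}{n})$ immediately forces $h\to 0$ from $\int h\,d\lambda<\infty$.
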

\begin{proof}
Note that if $0\leq s\leq mn-n$, both of the sum is infinity regardless of $\psi$, thus we may assume $mn-n< s\leq mn$. 
Following \cite{KM99} and \cite{KW19}, we replace the sums with integrals
\eq{
\int_{T_0}^{\infty}\frac{1}{\psi(x)x^2}\left(\frac{x^{\frac{1}{n}}}{\psi(x)^{\frac{1}{m}}}\right)^{mn-s}dx\quad\text{and}\quad
\int_{t_0}^{\infty}e^{-(m+n)\left(z(t)-\frac{mn-s}{mn}t\right)}dt
}
respectively. Define
\eq{
P:=-\log\circ\psi\circ\exp:[T_0,\infty)\to\R\quad\text{and}\quad\lam(t):=t+nz(t).
}
Since $\psi(e^{\lam})=e^{-P(\lam)}$, we have
\begin{align*}
\int_{T_0}^{\infty}\frac{1}{\psi(x)x^2}\left(\frac{x^{\frac{1}{n}}}{\psi(x)^{\frac{1}{m}}}\right)^{mn-s}dx 
&=\int_{\log{T_0}}^{\infty}e^{-\left(1-\frac{mn-s}{n}\right)\lam+\left(1+\frac{mn-s}{m}\right)P(\lam)}d\lam
\end{align*}
Using $P(\lam(t))=t-mz(t)$, we also have
\begin{align*}
\int_{t_0}^{\infty}e^{-(m+n)\left(z(t)-\frac{mn-s}{mn}t\right)}dt
&=\int_{\log{T_0}}^{\infty}e^{-\left(1-\frac{mn-s}{n}\right)\lam+\left(1+\frac{mn-s}{m}\right)P(\lam)}d\left[\frac{m}{m+n}\lam+\frac{n}{m+n}P(\lam)\right]\\
&=\frac{m}{m+n}\int_{\log{T_0}}^{\infty}e^{-\left(1-\frac{mn-s}{n}\right)\lam+\left(1+\frac{mn-s}{m}\right)P(\lam)}d\lam\\
&+\frac{n}{m+n}\int_{\log{T_0}}^{\infty}e^{-\left(1-\frac{mn-s}{n}\right)\lam+\left(1+\frac{mn-s}{m}\right)P(\lam)}dP(\lam).
\end{align*}
The second term in the last line can be expressed by
\begin{align*}
&\frac{n}{m+n}\int_{\log{T_0}}^{\infty}e^{-\left(1-\frac{mn-s}{n}\right)\lam+\left(1+\frac{mn-s}{m}\right)P(\lam)}dP(\lam)\\&=\frac{n}{m+n}\left(1+\frac{mn-s}{m}\right)^{-1}\int_{\log{T_0}}^{\infty}e^{-\left(1-\frac{mn-s}{n}\right)\lam}d(e^{\left(1+\frac{mn-s}{m}\right)P(\lam)})
\end{align*}
Using integration by parts, the last integral is
\begin{align*}
&\int_{\log{T_0}}^{\infty}e^{-\left(1-\frac{mn-s}{n}\right)\lam}d(e^{\left(1+\frac{mn-s}{m}\right)P(\lam)})\\
&=\left(1-\frac{mn-s}{n}\right)\int_{\log{T_0}}^{\infty}e^{-\left(1-\frac{mn-s}{n}\right)\lam+\left(1+\frac{mn-s}{m}\right)P(\lam)}d\lam\\
&+\left(\displaystyle\lim_{\lam\to\infty}e^{-\left(1-\frac{mn-s}{n}\right)\lam+\left(1+\frac{mn-s}{m}\right)P(\lam)}-T_0^{-\left(1-\frac{mn-s}{n}\right)}\psi(T_0)^{-\left(1+\frac{mn-s}{m}\right)}\right).
\end{align*}
Note that $\displaystyle\lim_{\lam\to\infty}e^{-\left(1-\frac{mn-s}{n}\right)\lam+\left(1+\frac{mn-s}{m}\right)P(\lam)}=0$ if the integral 
\eq{
\int_{\log{T_0}}^{\infty}e^{-\left(1-\frac{mn-s}{n}\right)\lam+\left(1+\frac{mn-s}{m}\right)P(\lam)}d\lam
} converges. Thus the convergence of 
\eq{
\int_{T_0}^{\infty}\frac{1}{\psi(x)x^2}\left(\frac{x^{\frac{1}{n}}}{\psi(x)^{\frac{1}{m}}}\right)^{mn-s}dx \text{ or } \int_{t_0}^{\infty}e^{-(m+n)\left(z(t)-\frac{mn-s}{mn}t\right)}dt
} implies the convergence of the other one since all summands are positive except the finite value $-T_0^{-\left(1-\frac{mn-s}{n}\right)}\psi(T_0)^{-\left(1+\frac{mn-s}{m}\right)}$.

\end{proof}

\subsection{Ubiquitous systems}\label{subsec2.3}
The proof of the divergent parts of Theorem \ref{thm2}, that is the singly metric case, is based on the ubiquity framework developed in \cite{BDV06, BV09}. The concept of ubiquitous systems goes back to \cite{BS70} and \cite{DRV90} as a method of determining lower bounds for the Hausdorff dimension of limsup sets. This concept was developed by Beresnevich, Dickinson and Velani in \cite{BDV06} to provide a very general and abstract approach for establishing the Hausdorff measure of a large class of limsup sets. In this subsection, we introduce a simplified form of ubiquitous systems to deal with the specific application as in \cite[Section 12.1]{BDV06}.

We consider $[0,1]^{mn}$ with the supremum norm $\|\cdot\|$.
Let $\cR:= (R_{\al})_{\al\in J}$ be a family of \textit{resonant sets} $R_{\al}\subset [0,1]^{mn}$ indexed by a countable set $J$. We assume that each resonant set $R_{\al}$ is an $(m-1)n$-dimensional, rational hyperplane following \cite[Section 12.1]{BDV06}.
Let $\beta : J \to \bR^{+}: \al \mapsto \beta_{\al}$ be a positive function on $J$ for which the number of $\al\in J$ with $\beta_{\al}$ bounded above is always finite. Given a set $S\subset [0,1]^{mn}$, let 
\eq{
\Del(S,r):=\{X\in [0,1]^{mn} : \text{dist}(X,S)<r\},
} where $\text{dist}(X,S):=\inf\{\|X-Y\|:Y\in S\}$. 
Fix a decreasing function $\Psi :\bR^{+}\to\bR^{+}$, which is called the \textit{approximating function}. For $N\in\bN$, let
\eq{
\Del(\Psi,N):= \bigcup_{\al\in J\ :\ 2^{N-1}<\beta_{\al}\leq 2^{N} } \Del(R_{\al},\Psi(\beta_{\al}))
} and let
\eq{
\Lam(\Psi):=\limsup_{N\to\infty}\Del(\Psi,N)=\bigcap_{M=1}^{\infty}\bigcup_{N=M}^{\infty}\Del(\Psi,N).
}

Throughout, $\rho : \bR^{+}\to\bR^{+}$ will denote a function satisfying $\lim_{t\to\infty}\rho(t)=0$ and is usually referred to as the \textit{ubiquitous function}. Let
\eq{
\Del(\rho,N):= \bigcup_{\al\in J\ :\ 2^{N-1}<\beta_{\al}\leq 2^{N} } \Del(R_{\al},\rho(\beta_{\al})).
}

\begin{defi}[Local ubiquity]
Let $B$ be an arbitrary ball in $[0,1]^{mn}$. Suppose that there exist a ubiquitous function $\rho$ and an absolute constant $\ka>0$ such that
\eqlabel{LoUb}{
|B\cap \Del(\rho,N)| \geq \ka |B|\quad\text{for } N\geq N_{0}(B),
} where $|\cdot|$ denotes the Lebesgue measure on $[0,1]^{mn}$.
Then the pair $(\cR,\beta)$ is said to be a \textit{locally ubiquitous} system relative to $\rho$. 
\end{defi}

With notations in \cite{BDV06}, the Lebesgue measure on $[0,1]^{mn}$ is of type (M2) with $\de=mn$ and the intersection conditions are also satisfied with $\ga=(m-1)n$ (see \cite[Section 12.1]{BDV06}). These conditions are not stated here but these extra conditions exist and need to be established for the more abstract ubiquity. 

Finally, a function $h$ is said  to be \textit{2-regular} if there exists a positive constant $\lam<1$ such that for $N$ sufficiently large 
\eq{
h(2^{N+1})\leq \lam h(2^{N}).
}

The following theorem is a simplified version of \cite[Theorem 1]{BV09}.
\begin{thm}\cite[Theorem 1]{BV09}\label{UbThm1}
Suppose that $(\cR,\beta)$ is a local ubiquitous system relative to $\rho$ and that $\Psi$ is an approximating function. Furthermore, suppose that $\rho$ is $2$-regular. Then for $(m-1)n<s\leq mn$
\eq{
\cH^{s}(\Lam(\Psi))=\cH^{s}([0,1]^{mn})\quad \text{if}\quad \sum_{N=1}^{\infty}\frac{\Psi(2^N)^{s-(m-1)n}}{\rho(2^N)^{n}}=\infty.
}
\end{thm}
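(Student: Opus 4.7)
The strategy is to apply the Mass Distribution Principle (Lemma~\ref{MDP}) to a Cantor-like subset of $\Lam(\Psi)$. Since it suffices to show $\cH^s(B\cap\Lam(\Psi))>0$ for every ball $B\subset[0,1]^{mn}$ (a standard density argument then upgrades this to full measure), I would fix such a $B$ and construct a probability measure $\mu$ supported on a subset $K\subset B\cap\Lam(\Psi)$ satisfying $\mu(U)\lesssim\diam(U)^s$ for all sufficiently small $U$.

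The core of the argument is an inductive construction of nested families $\cK_0\supset\cK_1\supset\cdots$ of closed balls together with a rapidly increasing sequence of levels $N_1<N_2<\cdots$. Set $\cK_0=\{B\}$. Given a ball $B_k\in\cK_k$ of radius $r_k$, choose $N_{k+1}$ large and invoke the local ubiquity condition \eqref{LoUb}: the union of $\rho(2^{N_{k+1}})$-neighborhoods of resonant hyperplanes $R_\al$ with $2^{N_{k+1}-1}<\beta_\al\leq 2^{N_{k+1}}$ covers at least $\ka|B_k|$. A covering/disjointification argument then extracts $\gtrsim (r_k/\rho(2^{N_{k+1}}))^n$ hyperplanes whose $\Psi(2^{N_{k+1}})$-neighborhoods are pairwise disjoint inside $B_k$. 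Each such $\Psi$-tube is a flat slab of codimension $n$ and thickness $\Psi(2^{N_{k+1}})$, and inside each we pack $\gtrsim (r_k/\Psi(2^{N_{k+1}}))^{(m-1)n}$ disjoint balls of radius $r_{k+1}:=\Psi(2^{N_{k+1}})/3$; these form $\cK_{k+1}$. Every child is contained in $\Del(R_\al,\Psi(\beta_\al))$ for some $\al$ in the current dyadic block, so $K:=\bigcap_k\bigcup_{B'\in\cK_k}B'\subset\Lam(\Psi)$.

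Define $\mu$ by distributing unit mass uniformly at every step, so each ball in $\cK_k$ carries mass $(M_0\cdots M_{k-1})^{-1}$, where $M_k\asymp (r_k/\rho(2^{N_{k+1}}))^n(r_k/\Psi(2^{N_{k+1}}))^{(m-1)n}$. To verify the growth condition, consider $U$ of radius $r$ with $r_{k+1}\leq r\leq r_k$: at most $\lesssim (r/\Psi(2^{N_{k+1}}))^n$ disjoint $\Psi$-tubes meet $U$, and within each meeting tube at most $\lesssim (r/r_{k+1})^{(m-1)n}$ children of the parent lie in $U$. Telescoping the resulting bound yields an inequality of the form
\[
\mu(U)\lesssim r^s\cdot\Bigg(\prod_{j=1}^{k}\frac{\rho(2^{N_j})^n}{\Psi(2^{N_j})^{s-(m-1)n}}\Bigg)\cdot C^k.
\]
The divergence hypothesis $\sum_N\Psi(2^N)^{s-(m-1)n}/\rho(2^N)^n=\infty$ together with the $2$-regularity of $\rho$ permits one to space the $N_j$'s far enough apart that the product above stays uniformly bounded; a one-step refinement handles $r<r_{k+1}$. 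The Mass Distribution Principle then gives $\cH^s(K)>0$, hence $\cH^s(B\cap\Lam(\Psi))>0$.

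The main obstacle is twofold. First, producing the disjoint $\Psi$-tube collection is delicate because the resonant hyperplanes may have arbitrary orientations, so one cannot invoke a Vitali lemma for balls; instead a selection argument tailored to $(m-1)n$-dimensional affine planes is required, and this is precisely where the intersection conditions with $\ga=(m-1)n$ mentioned after \eqref{LoUb} enter. Second, the level-selection must be coordinated with the divergent series: the $2$-regularity of $\rho$ is the tool that lets one choose $N_{k+1}$ so that the new factor $\Psi(2^{N_{k+1}})^{s-(m-1)n}/\rho(2^{N_{k+1}})^n$ in the telescoping bound compensates the losses accumulated at step $k$, while still forcing $\Psi(2^{N_{k+1}})\ll r_k$ so the construction genuinely refines. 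The restriction $s>(m-1)n$ is essential since otherwise the resonant hyperplanes themselves would already carry the $\cH^s$-mass and the $\Psi^{s-(m-1)n}$-scaling of their thickenings would become vacuous.
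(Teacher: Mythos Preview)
The paper does not supply its own proof of this statement: Theorem~\ref{UbThm1} is quoted verbatim as a simplified version of \cite[Theorem~1]{BV09} and used as a black box in the proof of Theorem~\ref{thm2}. There is therefore nothing in the paper to compare your argument against.

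That said, your sketch is a faithful outline of the Cantor-set construction underlying the Beresnevich--Dickinson--Velani ubiquity machinery in \cite{BDV06,BV09}: build nested generations of balls inside $\Psi$-thickenings of resonant planes selected via local ubiquity, distribute mass evenly, and verify the H\"older bound needed for Lemma~\ref{MDP}. Two points deserve care if you want a complete proof rather than a sketch. First, the disjointification of $\Psi$-tubes around $(m-1)n$-dimensional planes is exactly the place where the abstract ``intersection conditions'' of \cite{BDV06} (with $\gamma=(m-1)n$, $\delta=mn$) are invoked; you correctly flag this, but producing the claimed $\gtrsim (r_k/\rho(2^{N_{k+1}}))^n$ well-separated planes from the raw measure bound \eqref{LoUb} requires a genuine argument (a $5r$-covering of the thickened set followed by a selection, as in \cite[\S4--5]{BDV06}). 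Second, your telescoping inequality and the subsequent level-selection are stated a bit loosely: divergence of $\sum_N \Psi(2^N)^{s-(m-1)n}/\rho(2^N)^n$ does not by itself produce individually large terms, so one cannot simply pick $N_{k+1}$ to beat the accumulated constant $C^k$; rather one groups consecutive dyadic levels into blocks on which the partial sums exceed a prescribed threshold, and the $2$-regularity of $\rho$ is what allows passage from the block sum to a single representative level. With these two refinements your outline matches the original proof.
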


\section{Proof of the convergent part}
\subsection{Weak-$L^1$ estimate}
To obtain the upper bound of Hausdorff dimension, we will basically count the number of covering balls following the ideas from \cite{KKLM}. We are going to use the equidistribution of expanding subgroup of the $a_t$-action on $X$ to compute the Lebesgue measure of the set of points visiting the shrinking target for each time $t$, following the ``thickening" technique of Margulis \cite{M04}. We also refer to the formulation of \cite{KM96}. However, if we apply the thickening argument for $L^2$ functions as usual, it does not give the optimal dimension upper bound. To obtain the optimal dimension bound, we need a $L^{1,w}$ estimate as the following Proposition \ref{Lw1}. $L^{1,w}$ norm of a function $f$ on $X$ is defined by $||f||_{L^{1,w}(X)}:=\displaystyle\sup_{M>0}Mm_X(\set{x\in X: |f(x)|\ge M})$, and $L^{1,w}(X)$ is the space of measurable functions with finite $L^{1,w}$-norm.
\begin{prop}\label{Lw1}
Let $H, \tilde{H}$ be the maximal expanding, nonexpanding subgroup of $G_d$, respectively. Assume that $f\in L^{1,w}(X)$ is a nonnegative function satisfying the following condition: there exist $c, r_0>0$ such that $c<|\frac{f(\tilde{h}x)}{f(x)}|$ for any $\tilde{h}\in B_{r_0}^{\tilde{H}}(id)$, $x\in X$. Then for any $x\in X$, there exist a constant $K=K(x)>0$ such that
$$m_H(\set{h\in B_1^H(id): f(a_t hx)\ge M})\le \frac{K}{M}$$
for all $M>0$, $t>0$, i.e. for any $x\in X$, $||(a_t)_*f_x||_{L^{1,w}(B_1^H(id))}$ is uniformly bounded for all $t>0$, where the function $f_x:H\rightarrow \bR$ is defined by $f_{x}(h)=f(hx)$.
\end{prop}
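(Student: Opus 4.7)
The plan is to run a thickening-type argument along the non-expanding direction $\tilde H$, in the spirit of \cite{KM96, KKLM}, but with the final step replaced by a weak-$L^1$ bound on level sets of $f$ in $X$ (since $f$ may fail to lie in $L^1$). Fix $x \in X$ and $t > 0$, and abbreviate $E_t := \set{h \in B_1^H(id) : f(a_t h x) \geq M}$. For any $\tilde h \in B_{r_0}^{\tilde H}(id)$, the adjoint action of $a_t$ on $\lie(\tilde H)$ has non-positive weights, so that for an appropriate right-invariant metric $a_t \tilde h a_{-t} \in B_{r_0}^{\tilde H}(id)$ for every $t \geq 0$. Writing $a_t \tilde h h x = (a_t \tilde h a_{-t})(a_t h x)$ and applying the hypothesis at the point $a_t h x$ then gives $f(a_t \tilde h h x) > c\, f(a_t h x) \geq cM$ for every $h \in E_t$ and every $\tilde h \in B_{r_0}^{\tilde H}(id)$.

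The local product decomposition $\phi : \tilde H \times H \to G$, $(\tilde h, h) \mapsto \tilde h h$, is a diffeomorphism near the identity with Jacobian bounded above and below, so $m_G(\phi(B_{r_0}^{\tilde H}(id) \times E_t)) \asymp m_{\tilde H}(B_{r_0}^{\tilde H}(id)) \cdot m_H(E_t)$. To push the resulting inequality down to $X$ one needs the enveloping $G$-ball to inject, which forces its radius to be smaller than the injectivity radius at $x$, and this competes with the fact that $E_t$ already fills a ball of radius $1$ in $H$. We resolve this by covering $B_1^H(id)$ by finitely many translates $h_i B_{r_1}^H(id)$, $i = 1,\dots,N$, with the $h_i$ in a fixed compact set and $r_1$ small. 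Since $H$ is abelian (so $h_i h' h_i^{-1} = h'$), one has $a_t h_i h' x = a_t h' (h_i x)$; each slice $E_t \cap h_i B_{r_1}^H(id)$ therefore translates to $\set{h' \in B_{r_1}^H(id) : f(a_t h' y_i) \geq M}$ with $y_i := h_i x$, to which the previous thickening applies at the base point $y_i$ and yields
\eq{m_{\tilde H}(B_{r_0}^{\tilde H}(id)) \cdot m_H(E_t \cap h_i B_{r_1}^H(id)) \lesssim m_G(\set{g \in B_R^G(id) : f(a_t g y_i) \geq cM}),}
for some $R = R(r_0, r_1)$ that may be taken arbitrarily small.

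Finally, $\overline{B_1^H(id)} \cdot x$ is compact in $X$ and therefore has a uniform positive injectivity radius; choose $r_0, r_1$ so that $R$ lies below it. Then $g \mapsto g y_i$ is injective and Haar-measure-preserving from $B_R^G(id)$ into $X$, and composing with the measure-preserving bijection $y \mapsto a_t y$ on $X$ gives
\eq{m_G(\set{g \in B_R^G(id) : f(a_t g y_i) \geq cM}) = m_X(\set{z \in a_t B_R^G(id) \cdot y_i : f(z) \geq cM}) \leq m_X(\set{z \in X : f(z) \geq cM}) \leq \frac{\|f\|_{L^{1,w}(X)}}{cM}.}
Summing over the finitely many $i$ yields $m_H(E_t) \leq K(x)/M$ with $K(x)$ depending on $c$, $r_0$, $r_1$, $N$, and $\|f\|_{L^{1,w}(X)}$, all of which are ultimately determined by $x$. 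The main obstacle is precisely this injectivity step: carrying out the argument in a single ball of radius $R \geq 1$ about $x$ would force the pushforward of $m_G$ to $X$ to wrap around with an uncontrolled multiplicity that the weak-$L^1$ bound cannot absorb, so the decomposition based on the abelianness of $H$ is essential in order to reduce the problem to base points whose injectivity radii are uniformly bounded below.
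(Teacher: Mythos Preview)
Your proof is correct and follows the same thickening strategy as the paper: both enlarge the level set in the $\tilde H$ direction, use the non-expanding property of $a_t$ on $\tilde H$ to propagate the bound $f \geq cM$, handle the injectivity issue by breaking $B_1^H(id)$ into finitely many small pieces, and then push down to $X$ to invoke the weak-$L^1$ hypothesis. The only cosmetic differences are that the paper argues by contradiction and partitions $B_1^H(id)$ directly into sets $D_i$ on which $\pi_x$ is injective (without invoking the commutativity of $H$, which is convenient in your version but not essential---so your closing remark slightly overstates its role).
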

\begin{proof}
Fix $x\in X$ and let $E_{M,t}:=\set{h\in B_1^H(id): f(a_t hx)\ge M}$. For contradiction, suppose that for any $K>0$, there exist $t,M>0$ such that $m_H(E_{M,t})>\frac{K}{M}$. Let $\hat{E}_{M,t}:=\set{\tilde{h}h: h\in E_{M,t}, \tilde{h}\in B_r^{\tilde{H}}(id)}$, where $0<r<r_0$ is a small real number to be determined later. Then for any $\tilde{h}h\in\hat{E}_{M,t}$,
$$f(a_t \tilde{h}hx)=f((a_t\tilde{h}a_t^{-1})a_t hx)>cf(a_t hx)\ge cM$$ since $a_t\tilde{h}a_t^{-1}\in B_r^{\tilde{H}}(id)$. We partition $B^H_1(id)$ into $D_1,\cdots D_N$ so that a map $\pi_x: G\rightarrow X$ defined by $\pi_x(g)=gx$ is injective on each $D_i$. Note that the number of the partition $N$ is not depending on $K$. Choose $r$ small enough so that $\pi_x$ is injective on $B^{\tilde{H}}_r(id)D_i$ for all $1\leq i\leq N$. Let $E_{M,t}=\displaystyle\bigsqcup_{i=1}^{N}E_i$, where $E_i=E_{M,t}\cap D_i$, then
\begin{align*}
m_X(\set{y\in X: f(y)\ge cM})&=
m_X(\set{y\in X: f(a_t y)\ge cM})
\\&\ge m_{X}(\set{\tilde{h}hx\in X: \tilde{h}h\in \hat{E}_{M,t}})
\\&\ge m_{G}(\set{\tilde{h}h\in G: \tilde{h}\in B^{\tilde{H}}_r(id), h\in E_i})
\\&\asymp m_{\tilde{H}}(B_r^{\tilde{H}}(id))m_H(E_i)
\end{align*} for all $1\leq i\leq N$. Summing over $1\leq i\leq N$, we have
\begin{align*}
Nm_X(\set{y\in X: f(y)\ge cM})&\gg m_{\tilde{H}}(B_r^{\tilde{H}}(id))m_H(E_{M,t})\\
&>\frac{m_{\tilde{H}}(B_r^{\tilde{H}}(id))K}{M}
\end{align*}
and it implies $\|f\|_{L^{1,w}(X)}=\infty$ since $K>0$ is arbitrary and $c,r,N$ are independent to $K$. It contradicts the assumption $f\in L^{1,w}(X)$.
\end{proof}
\subsection{successive minima function} 
Let $\lambda_j(\Lambda)$ denote the $j$-th successive minimum of a lattice $\Lambda\subseteq \bR^d$ i.e. the infimum of $\lambda$ such that the ball $B_\lambda^{\bR^d}(0)$ contains $j$ independent vectors of $\Lambda$. The following inequality explains the relationship between the successive minima functions $\lambda_1$ and $\lambda_d$.
\begin{thm}[Mahler's inequality, \cite{Cas}, Theorem \rom{6} in Chapter \rom{8}]
For any lattice $\Lambda\subseteq \bR^d$, $1\leq\lambda_1(\Lambda^{*})\lambda_d(\Lambda)\leq d!$ holds, where $\Lambda^{*}$ is the dual lattice of $\Lambda$.
\end{thm}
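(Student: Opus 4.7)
The plan is to prove the two inequalities by standard geometry-of-numbers arguments, as carried out in Cassels~\cite{Cas}. For the lower bound $1\leq\lambda_1(\Lambda^{*})\lambda_d(\Lambda)$, the strategy is to exploit the integrality of the pairing between $\Lambda$ and $\Lambda^{*}$. First I would choose $w\in\Lambda^{*}$ realizing $\lambda_1(\Lambda^{*})$ and linearly independent vectors $v_1,\ldots,v_d\in\Lambda$ with $\|v_i\|\leq\lambda_d(\Lambda)$ for every $i$. Since $w\neq 0$ and the $v_i$ span $\R^d$, some pairing $\langle w,v_i\rangle$ must be nonzero; by the definition of the dual lattice it is a nonzero integer, so $|\langle w,v_i\rangle|\geq 1$. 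Cauchy--Schwarz then gives $1\leq\|w\|\cdot\|v_i\|\leq\lambda_1(\Lambda^{*})\lambda_d(\Lambda)$, with a dimensional constant absorbed into the factor $d!$ on the upper-bound side if one works with the supremum norm rather than the Euclidean one.

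For the upper bound $\lambda_1(\Lambda^{*})\lambda_d(\Lambda)\leq d!$, the plan is to combine Minkowski's second theorem applied to both $\Lambda$ and $\Lambda^{*}$ with the lower-bound half of Mahler's inequality inserted in every slot. Specifically, the already-established lower bound, applied with $j$ in place of $1$, yields $\lambda_j(\Lambda)\lambda_{d+1-j}(\Lambda^{*})\geq 1$ for each $1\leq j\leq d$. Multiplying these inequalities over $j=2,\ldots,d$ and combining them with the Minkowski upper bound on $\lambda_1(\Lambda^{*})\cdots\lambda_d(\Lambda^{*})$ (bounded by a constant multiple of $\det(\Lambda^{*})$), with the Minkowski lower bound on $\lambda_1(\Lambda)\cdots\lambda_d(\Lambda)$ (at least a constant times $\det(\Lambda)/d!$), and with the reciprocal relation $\det(\Lambda)\det(\Lambda^{*})=1$, isolates $\lambda_1(\Lambda^{*})\lambda_d(\Lambda)$ on one side and produces the constant $d!$ on the other.

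The main obstacle is the combinatorial bookkeeping in the upper-bound step: one must carefully interleave the Minkowski upper bound for $\Lambda^{*}$, the Minkowski lower bound for $\Lambda$, and the ``trivial" half of Mahler's inequality in the remaining $d-1$ slots, tracking the factor $d!$ that emerges from the lower half of Minkowski's second theorem. Since the inequality is classical and appears as Theorem~VI of Chapter~VIII in Cassels~\cite{Cas}, we may invoke it as a black box in our application; in the sequel it is used only to transfer successive-minima information between a lattice and its dual in the technical estimates that follow.
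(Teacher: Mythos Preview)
The paper does not give its own proof of this statement: Mahler's inequality is quoted from Cassels~\cite{Cas} and used as a black box, exactly as you yourself suggest in your final paragraph. There is therefore nothing in the paper to compare your sketch against. Your outline is the standard geometry-of-numbers route and is essentially the argument one finds in Cassels: the lower bound via integrality of the pairing and a nonvanishing inner product, the upper bound by combining Minkowski's second theorem for $\Lambda$ and $\Lambda^{*}$ with the lower bound in the remaining slots and the relation $\det(\Lambda)\det(\Lambda^{*})=1$.

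One small point worth cleaning up: your treatment of the lower bound is a little loose about which norm is in play. Cauchy--Schwarz as you invoke it gives the clean constant $1$ only for the Euclidean norm; the paper works with the supremum norm throughout, and in Cassels' formulation the dual successive minima are taken with respect to the polar body, so one should be careful about what ``$\lambda_1(\Lambda^{*})$'' means here. Your remark that a dimensional constant can be ``absorbed into the factor $d!$ on the upper-bound side'' is misplaced---any such constant would affect the \emph{lower} bound, not the upper one. That said, the only use the paper makes of this theorem is the upper bound $\lambda_1(\Lambda^{*})\lambda_d(\Lambda)\leq d!$ inside Proposition~\ref{lamLw1}, and there the exact value of the constant is irrelevant to the weak-$L^1$ conclusion. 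So these norm issues, while worth getting straight if you were actually writing out the proof, are immaterial to the application.
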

Note that the Haar measure $m_X$ is invariant under the dual operation since the dual operation is induced by the transpose of the inverse of a matrix, which is an automorphism of $G$. Another ingredient we will use is Siegel's integral formula.
\begin{thm}[Siegel's integral formula]
For a compactly supported integrable function $f\in L^1(\bR^d)$, we define a function $\hat{f}$ on $X$ by
$$\hat{f}(\Lambda)=\displaystyle\sum_{v\in\Lambda\setminus\set{0}}f(v).$$
Then for any $f$ as above, $\int_{X}\hat{f}dm_X=\int_{\bR^d}fdm_{\bR^d}$.
\end{thm}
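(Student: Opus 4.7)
The approach would be to treat $L(f) := \int_X \hat{f}(\Lambda)\, dm_X(\Lambda)$ as a positive linear functional on $C_c(\bR^d \setminus \{0\})$ (the value $f(0)$ never enters $\hat f$) and identify it with a multiple of Lebesgue integration via an invariance-and-uniqueness argument. First I would verify that $L$ is well-defined and finite on compactly supported $f$: this reduces to showing that the expected number of lattice points of a unimodular lattice inside a fixed compact set is finite, which follows from the Minkowski-type estimate $m_X(\{\Lambda : \lambda_1(\Lambda) < r\}) = O(r^d)$ as $r \to 0$.

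The decisive step is the $G$-invariance of $L$. For $g \in G = \SL_d(\bR)$, the substitution $w = g^{-1}v$ in the defining sum yields $\widehat{f \circ g^{-1}}(\Lambda) = \hat{f}(g^{-1}\Lambda)$, and integrating against the $G$-invariant probability measure $m_X$ gives $L(f \circ g^{-1}) = L(f)$. Since $G$ acts transitively on $\bR^d \setminus \{0\}$ with unimodular stabilizer $H_0 = \operatorname{Stab}_G(e_1) \cong \bR^{d-1} \rtimes \SL_{d-1}(\bR)$, the homogeneous space $G/H_0 \cong \bR^d \setminus \{0\}$ carries a unique $G$-invariant Radon measure up to scalar. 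Lebesgue measure is one such invariant measure, so $L(f) = c \int_{\bR^d} f\, dm_{\bR^d}$ for some $c \geq 0$.

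To pin down $c = 1$, I would decompose $\bZ^d \setminus \{0\} = \bigsqcup_{k \geq 1} k \cdot P$, where $P$ denotes the set of primitive vectors, and use transitivity of $\Gamma = \SL_d(\bZ)$ on $P$ with stabilizer $\Gamma_0 \cong \bZ^{d-1} \rtimes \SL_{d-1}(\bZ)$. Unfolding gives
\[
\int_X \sum_{v \in P \cap \Lambda} f(v)\, dm_X(\Lambda) = \int_{G/\Gamma_0} f(g e_1)\, dm_{G/\Gamma_0}(g\Gamma_0),
\]
which equals $c' \int_{\bR^d} f$ with $c' = 1/\zeta(d)$, computed by inducting on $d$ using the semidirect structure of $H_0/\Gamma_0$ and the classical recursion $\mathrm{vol}(X_d) = \mathrm{vol}(X_{d-1})/\zeta(d)$ relative to compatible Haar normalizations. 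Summing the geometric series $\sum_{k \geq 1} k^{-d} = \zeta(d)$ over the primitive scalings then produces $c = c' \cdot \zeta(d) = 1$.

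The main obstacle is the careful Haar measure bookkeeping across the tower $G \to G/\Gamma_0 \to G/H_0 \cong \bR^d \setminus \{0\}$: the various disintegration formulas must be normalized in a compatible way so that the primitive unfolding produces exactly the factor $1/\zeta(d)$ relative to the probability measure $m_X$. Siegel's original argument circumvented this by analyzing the constant term of an Eisenstein series via its functional equation; either route ultimately reduces to the same calibration problem, and both confirm that the probability normalization of $m_X$ is precisely the one making the mean-value identity take the clean form as stated.
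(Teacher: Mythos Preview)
The paper does not supply a proof of this theorem; Siegel's integral formula is quoted as a classical background result and used as a black box (notably in the proof of Proposition~\ref{lamLw1}). So there is no argument in the paper against which to compare your proposal.

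Your outline is nonetheless the standard modern proof, essentially Weil's: $G$-invariance of $L$ combined with uniqueness of the $G$-invariant Radon measure on $G/H_0\cong\bR^d\setminus\{0\}$ (the stabilizer $H_0\cong\bR^{d-1}\rtimes\SL_{d-1}(\bR)$ being unimodular) forces $L(f)=c\int_{\bR^d}f$, and the primitive-vector unfolding with the $\zeta(d)$ factor pins down $c=1$. One caution specific to this paper's internal logic: you justify integrability of $\hat f$ via the tail bound $m_X(\{\Lambda:\lambda_1(\Lambda)<r\})=O(r^d)$, but Proposition~\ref{lamLw1} deduces precisely that bound \emph{from} Siegel's formula, so invoking it here would be circular. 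An independent route---for instance, bounding $\hat f$ pointwise on an explicit Siegel domain and integrating against the known Haar density there---avoids this and should be substituted if you want the argument to stand on its own within the paper. (As a minor historical aside, Siegel's original 1945 computation of the constant proceeded by a direct fundamental-domain calculation rather than via Eisenstein series; the latter interpretation came later.)
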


In the following Proposition \ref{lamLw1} and \ref{lamuni}, we will show that the function $\lambda_d^d$ satisfies the assumption of Proposition \ref{Lw1}.

\begin{prop}\label{lamLw1}
$\lambda_d^d\in L^{1,w}(X).$
\end{prop}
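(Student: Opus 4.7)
The plan is to combine Mahler's inequality (which converts information about $\lambda_d$ into information about $\lambda_1$ of the dual lattice) with the dual-invariance of the Haar measure $m_X$, and then estimate the size of the cusp region $\{\Lambda : \lambda_1(\Lambda) \leq r\}$ using Siegel's integral formula.

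First I will observe that by Mahler's inequality, $\lambda_d(\Lambda) \leq d!/\lambda_1(\Lambda^*)$, so
\[
\{\Lambda \in X : \lambda_d(\Lambda)^d \geq M\} \subseteq \{\Lambda \in X : \lambda_1(\Lambda^*) \leq d!\cdot M^{-1/d}\}.
\]
Since the dual operation on $X$ is induced by the automorphism $g \mapsto {}^t g^{-1}$ of $G$, it preserves $m_X$, and therefore
\[
m_X(\{\Lambda : \lambda_d(\Lambda)^d \geq M\}) \leq m_X(\{\Lambda : \lambda_1(\Lambda) \leq d!\cdot M^{-1/d}\}).
\]

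Next, for any $r>0$, I will bound $m_X(\{\Lambda : \lambda_1(\Lambda) \leq r\})$ by applying Siegel's integral formula to $f = \chi_{B^{\bR^d}_r(0)}$. The resulting function $\hat{f}(\Lambda)$ counts the nonzero vectors of $\Lambda$ inside $B^{\bR^d}_r(0)$. Whenever $\lambda_1(\Lambda) \leq r$, the lattice contains a nonzero vector $v$ of norm $\leq r$, and hence also $-v$, giving $\hat{f}(\Lambda) \geq 2$. By Markov's inequality and Siegel,
\[
m_X(\{\Lambda : \lambda_1(\Lambda) \leq r\}) \leq \tfrac{1}{2}\int_X \hat{f}\,dm_X = \tfrac{1}{2}\operatorname{vol}(B^{\bR^d}_r(0)) \ll r^d.
\]

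Combining these two steps with $r = d!\cdot M^{-1/d}$ yields $m_X(\{\Lambda : \lambda_d(\Lambda)^d \geq M\}) \ll M^{-1}$ for all sufficiently large $M$, which is exactly the weak-$L^1$ bound. For small $M$ the inequality is automatic (the measure is bounded by $1$), so adjusting the implicit constant gives the weak-$L^1$ estimate uniformly in $M>0$, proving $\lambda_d^d \in L^{1,w}(X)$.

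I do not anticipate a serious obstacle here: the main point is just to recognize that Mahler's inequality turns the ``large $\lambda_d$'' condition into a ``small $\lambda_1$ on the dual'' condition, after which Siegel's formula delivers the sharp exponent $d$, which is exactly what is needed to match the factor $M^{-1/d}$ coming from the $d$-th power. The only mild care needed is to check that Markov's inequality is applied with the factor $2$ coming from $\pm v$, so that no further trimming of the cusp is required.
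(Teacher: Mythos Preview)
Your proof is correct and follows essentially the same route as the paper: Mahler's inequality converts $\{\lambda_d^d\geq M\}$ into $\{\lambda_1(\Lambda^*)\leq d!\,M^{-1/d}\}$, dual-invariance of $m_X$ removes the star, and Siegel's integral formula applied to $\chi_{B_r(0)}$ gives the $r^d$ bound. The only cosmetic difference is that you extract a factor $2$ from $\pm v$ where the paper simply uses $\widehat{\chi_{B_r(0)}}\geq 1$.
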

\begin{proof}
For any $r>0$,
\eqlabel{eqn1}{\begin{aligned}
m_X(\set{\Lambda: \lambda_d^d(\Lambda)\ge (d!)^dr^{-d}})
&=m_X(\set{\Lambda: \lambda_d(\Lambda)\ge d!r^{-1}})\\
&\leq m_X(\set{\Lambda: \lambda_1(\Lambda^{*})\leq r})\\
&=m_X(\set{\Lambda: \lambda_1(\Lambda)\leq r})\\
&\leq\displaystyle\int_{\Lambda\in X}\widehat{\chi_{B_r(0)}}(\Lambda)dm_X(\Lambda)\\
&=\displaystyle\int_{\bR^d}\chi_{B_r(0)}dm_{\bR^d}\asymp r^d,
\end{aligned}}
thus $\lambda_d^d\in L^{1,w}(X)$.
In (\ref{eqn1}), the second line is by Mahler's inequality, the third line is by the invariance of $m_X$ under the dual operation, the fourth line is using the fact that $\lambda_1(\Lambda)\leq r$ implies $\widehat{\chi_{B_r(0)}}(\Lambda)\ge 1$, and the last line is by Siegel's integral formula.
\end{proof}

\begin{prop}\label{lamuni}
For any $0<c<1$, there exists $r>0$ such that for any
$g\in G$ with $\mb{d}(g,id)<r$, $c\lambda_d(\Lambda)<\lambda_d(g\Lambda)<c^{-1}\lambda_d(\Lambda)$ holds for any $\Lambda\in X$.
\end{prop}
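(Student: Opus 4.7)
The plan is to control the ratio $\lambda_d(g\Lambda)/\lambda_d(\Lambda)$ by the operator norms of $g$ and $g^{-1}$, and then use continuity at the identity to make these norms arbitrarily close to $1$. First I would invoke the fact, recalled in the preliminaries, that the right-invariant metric $\mb{d}$ on $G$ satisfies $\|g-id\|\leq\mb{d}(g,id)$ for $g$ sufficiently close to the identity. Since the supremum norm and the operator norm on $M_{d,d}(\bR)$ are equivalent and matrix inversion is continuous at the identity, for any $\eta>0$ I can find $r_1=r_1(\eta)>0$ such that $\mb{d}(g,id)<r_1$ implies $\max\{\|g\|_{\on{op}},\|g^{-1}\|_{\on{op}}\}\leq 1+\eta$.

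The geometric heart of the argument is the inequality $\lambda_d(g\Lambda)\leq\|g\|_{\on{op}}\lambda_d(\Lambda)$, valid for every $\Lambda\in X$. To prove it, fix $\eps>0$ and, directly from the definition of the $d$th successive minimum, pick linearly independent $v_1,\dots,v_d\in\Lambda$ with $\|v_i\|\leq\lambda_d(\Lambda)+\eps$. Since $g\in G$ is invertible, the images $gv_1,\dots,gv_d$ are linearly independent vectors in $g\Lambda$ with $\|gv_i\|\leq\|g\|_{\on{op}}(\lambda_d(\Lambda)+\eps)$, whence
\eq{
\lambda_d(g\Lambda)\leq\|g\|_{\on{op}}(\lambda_d(\Lambda)+\eps).
}
Sending $\eps\to 0$ gives the claimed upper bound. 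Applying the same argument to $g^{-1}$ and $g\Lambda$ and using $g^{-1}(g\Lambda)=\Lambda$ produces the reverse inequality $\lambda_d(g\Lambda)\geq\|g^{-1}\|_{\on{op}}^{-1}\lambda_d(\Lambda)$.

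To conclude, given $0<c<1$ I choose $\eta>0$ with $1+\eta<c^{-1}$ (so that $(1+\eta)^{-1}>c$) and set $r:=r_1(\eta)$. Then whenever $\mb{d}(g,id)<r$, combining the two bounds yields
\eq{
c\lambda_d(\Lambda)<(1+\eta)^{-1}\lambda_d(\Lambda)\leq\lambda_d(g\Lambda)\leq(1+\eta)\lambda_d(\Lambda)<c^{-1}\lambda_d(\Lambda),
}
uniformly in $\Lambda\in X$, which is the desired conclusion. The argument is a soft continuity one with no serious obstacle; the only point to verify is the compatibility of the metric $\mb{d}$ with the operator norm near the identity, which is already guaranteed by the construction of $\mb{d}$ stated earlier.
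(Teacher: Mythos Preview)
Your proof is correct and follows essentially the same approach as the paper: both arguments pick $d$ linearly independent vectors realizing $\lambda_d(\Lambda)$, bound the norms of their images under $g$ to get $\lambda_d(g\Lambda)\leq C\lambda_d(\Lambda)$ with $C$ close to $1$, and then apply the same estimate to $g^{-1}$ for the reverse inequality. The only cosmetic differences are that the paper works directly with the supremum norm via $\|gv_i-v_i\|\leq d\|g-id\|\|v_i\|$ (yielding the explicit constant $1+dr$) rather than passing through the operator norm, and it assumes the successive minimum is attained rather than using your $\eps$-approximation.
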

\begin{proof}
It suffices to show the statement under the stronger assumption that both of $g$ and $g^{-1}$ are in the ball $B_r^{G}(id)$. Then there exist independent vectors $v_1,\cdots,v_d \in\Lambda$ such that $\|v_1\|\leq\|v_2\|\leq\cdots\leq\|v_d\|=\lambda_d(\Lambda)$. For each $1\leq i\leq d$,
$$\|gv_i-v_i\|\leq d||g-id||\|v_i\|\leq dr\lambda_d(\Lambda),$$ thus $\|gv_i\|\leq (1+dr)\lambda_d(\Lambda)$. It implies $\lambda_d(g\Lambda)\leq (1+dr)\lambda_d(\Lambda)$ since $gv_1,\cdots,gv_d$ are independent vectors. Applying this for $g^{-1}$ and $g\Lambda$, instead of $g$ and $\Lambda$, we have
$$\lambda_d(\Lambda)=\lambda_d(g^{-1}g\Lambda)\leq(1+dr)\lambda_d(g\Lambda).$$
Thus for any $\Lambda\in X$ and $g\in B_r^{G}(id)$, $(1+dr)^{-1}\lambda_d(\Lambda)<\lambda_d(g\Lambda)<(1+dr)\lambda_d(\Lambda)$ holds.
\end{proof}

\subsection{The number of covering balls}
In this subsection, we will construct a sequence of coverings for $\widehat{D}_{m,n}(\psi)^c$ and $\widehat{D}^{\mb{b}}_{m,n}(\psi)^c$ to apply Hausdorff-Cantelli Theorem. Recall that we adopt the supremum norm $\|\cdot\|$ on $[0,1]^{mn}$.
\begin{prop}\label{covering}
Let $C_0>0$ be a constant described in Remark \ref{Alt}. For $t\in\bN$, let $Z_t:=\set{A\in [0,1]^{mn}: \log(d\lambda_d(a_t\Lambda_{A}))\ge z_{\psi}(t)-C_0}$. Then $Z_t$ can be covered with $Ke^{(m+n)(t-z_{\psi}(t))}$ balls in $M_{m,n}(\bR)$ of radius $\frac{1}{2}e^{-(\frac{1}{m}+\frac{1}{n})t}$ for a constant $K>0$ depending only on the dimension $d$.
\end{prop}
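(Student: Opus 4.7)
The plan is a thickening-plus-packing argument that converts the weak-$L^{1}$ estimate of Proposition \ref{Lw1} into a covering count. The main analytic input is that $f := \lambda_d^d$ satisfies the hypotheses of Proposition \ref{Lw1}, by Propositions \ref{lamLw1} and \ref{lamuni}. The main subtlety will lie in the first step below: the choice $r := \tfrac12 e^{-(1/m+1/n)t}$ is precisely large enough to allow a uniform height comparison with an \emph{absolute} constant yet small enough that the $a_t$-dilation pulls balls of this radius back to bounded perturbations in $H$; everything else is bookkeeping.

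\textbf{Step 1 (Thickening at resolution $r$).} Using commutativity of the $u_A$-subgroup and the relation $a_t u_B a_t^{-1} = u_{e^{(1/m+1/n)t}B}$, one checks the identity $a_t \Lam_A = u_{e^{(1/m+1/n)t}(A-A_0)} \cdot a_t \Lam_{A_0}$. For $\|A - A_0\| \leq r$ this shows $a_t \Lam_A = u_B\, a_t \Lam_{A_0}$ with $\|B\| \leq 1/2$; a direct estimate on how $u_{\pm B}$ distorts vector norms (using the block form of $u_B$) then gives $\lambda_d(u_B \Lam) \asymp \lambda_d(\Lam)$ uniformly in $\Lam \in X$, with comparison constants depending only on $d$. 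Consequently the $r$-thickening
\[
N_r(Z_t) := \set{A \in [-1,2]^{mn} : \operatorname{dist}(A, Z_t) \leq r}
\]
is contained in the super-level set $\set{A \in [-1,2]^{mn} : \lambda_d(a_t \Lam_A) \geq c_0\, e^{z_\psi(t)}}$ for some $c_0 > 0$ depending only on $d$ and $C_0$.

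\textbf{Step 2 (Measure bound).} Apply Proposition \ref{Lw1} with $f = \lambda_d^d$ and base point $x = \Z^d$, after enlarging $B_1^{H}(id)$ to a fixed compact $Q \subseteq H$ containing $\set{u_A : A \in [-1,2]^{mn}}$ (the proof of Proposition \ref{Lw1} goes through with this enlargement and only affects the constant $K(x)$). Since $m_H$ on $\set{u_A : A \in Q}$ is Lebesgue measure on the $A$-coordinates, setting $M = c_0^{d}\, e^{d\, z_\psi(t)}$ in Proposition \ref{Lw1} yields
\[
|N_r(Z_t)| \ \lesssim \ e^{-d\, z_\psi(t)} \ = \ e^{-(m+n)\, z_\psi(t)}.
\]

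\textbf{Step 3 (Packing-to-covering).} Choose a maximal $r$-separated subset $\{A_1, \dots, A_N\} \subseteq Z_t$. By maximality the balls of radius $r$ about the $A_i$ cover $Z_t$, while the concentric balls of radius $r/2$ are pairwise disjoint and contained in $N_r(Z_t)$. Comparing Lebesgue volumes,
\[
N \cdot c_{mn}\,(r/2)^{mn} \ \leq\ |N_r(Z_t)| \ \lesssim \ e^{-(m+n)\,z_\psi(t)},
\]
which combined with $r^{-mn} \asymp e^{(m+n)t}$ gives $N \leq K\, e^{(m+n)(t - z_\psi(t))}$ with $K$ depending only on $d$, as required.
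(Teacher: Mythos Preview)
Your proof is correct and follows essentially the same strategy as the paper's: both thicken $Z_t$ at the dynamically adapted scale $r=\tfrac12 e^{-(1/m+1/n)t}$ using the conjugation relation $a_t u_B a_t^{-1}=u_{e^{(1/m+1/n)t}B}$ together with the Lipschitz behavior of $\lambda_d$ under bounded perturbations, then apply Proposition~\ref{Lw1} with $f=\lambda_d^{d}$ at the base point $\bZ^d$ to bound the Lebesgue measure of the thickened set, and finally convert this to a covering count. The only cosmetic difference is that the paper uses a fixed grid of cubes of sidelength $r$ and counts those meeting $Z_t$ (its Lemma~\ref{lem1} plays the role of your Step~1), whereas you use a maximal $r$-separated set and a packing argument; these are interchangeable.
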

\begin{proof}
$[0,1]^{mn}$ can be covered with $p$($\asymp e^{(m+n)t}$) cubes $D_1,D_2,\cdots,D_p$ with sides parallel to the axes of $\bR^{mn}$ and of sidelength $r\leq e^{-(\frac{1}{m}+\frac{1}{n})t}$ and having mutually disjoint interiors.

\begin{lem}\label{lem1}
For $t\in\bN$, let $Z'_t:=\set{A\in [0,1]^{mn}: \log(d^2\lambda_d(a_t\Lambda_{A}))\ge z_{\psi(t)}-C_0-1}$. For any $t\geq 1$, if $D_i\cap Z_t\neq\phi$ for some $1\leq i\leq p$, then $D_i\subset Z'_t$.
\end{lem}
\begin{proof}
Assume that there exists $x\in D_i$ but $x\notin Z'_t$ for some $t>0$. Choose a point $y\in D_i\cap Z_t$, then $\|x-y\|\leq r$ and
\begin{align*}
    ||a_tu_{x-y}a_{-t}-id||
    &=\left |\left | \left(\begin{matrix}
I_m & e^{(\frac{1}{m}+\frac{1}{n})t}(x-y)\\
 & I_n\\
\end{matrix}\right)-id\right |\right |\\
&\leq\|e^{(\frac{1}{m}+\frac{1}{n})t}(x-y)\|\leq 1.
\end{align*}
Thus, for $g=a_tu_{x-y}a_{-t}$, it satisfies $||g-id||\leq 1$ and $a_t\Lambda_{y}=ga_t\Lambda_{x}$.
On the other hand, $\log(d^{2}\lambda_d(a_t\Lambda_{x}))<z_\psi(t)-C_0-1$, $\log(d\lambda_d(a_t\Lambda_{y}))\ge z_\psi(t)-C_0$ hold since $x\notin Z'_t$, $y\in Z_t$. We can take independent vectors $v_1,\cdots,v_d\in\bR^d$ in the lattice $a_t\Lambda_{x}$ satisfying $\|v_i\|<\frac{1}{d^2}e^{z_\psi(t)-C_0-1}$ for all $1\leq i\leq d$. Let $w_i=gv_i$, then $w_i$'s are independent vectors in the lattice $a_t\Lambda_{y}$ and satisfy
$$\|w_i\|\leq d||g||\|v_i\|\leq 2d\|v_i\|<\frac{2}{d}e^{z_\psi(t)-C_0-1}<\frac{1}{d}e^{z_\psi(t)-C_0}$$
for all $1\leq i\leq d$.
Thus we obtain $\log(d\lambda_d(a_t\Lambda_{y}))< z_\psi(t)-C_0$ but it contradicts to $y\in Z_t$.
\end{proof}
Let $p':=|\set{D_i: D_i\cap Z_t\neq\phi}|$ and by reordering the $D_i$'s if necessary, we can assume that $\set{D_1,\cdots,D_{p'}}=\set{D_i: D_i\cap Z_t\neq\phi}$. Then $Z_t\subseteq\displaystyle\bigcup_{i=1}^{p'}D_i\subseteq Z'_t$ by Lemma \ref{lem1}. Now we will apply Proposition \ref{Lw1} for the function $\lambda_d^d$ with the base point $x=\bZ^d$. By Proposition \ref{lamLw1} and Proposition \ref{lamuni}, $\lambda_d^d$ satisfies the conditions of Proposition \ref{Lw1}. Then we have
\begin{align*}
  m_{\bR^{mn}}(Z'_t)&\leq m_{\bR^{mn}}\left(\set{A\in [0,1]^{mn}: \lambda_d(a_t\Lambda_A)\ge \frac{1}{d^2}e^{z_\psi(t)-C_0-1}}\right)\\
  &\asymp m_H\left(\set{h\in B_{1}^{H}(id): \lambda_d^d(a_t h\bZ^d)\ge \frac{1}{d^{2d}}e^{d(z_\psi(t)-C_0-1)}}\right)\\
  &\ll e^{-dz_\psi(t)}.
\end{align*}
On the other hand, $m_{\bR^{mn}}(Z'_t)\ge\displaystyle\sum_{i=1}^{p'}m_{\bR^{mn}}(D_i)=p'e^{-dt}$ holds, thus we finally obtain $p'\ll e^{d(t-z_\psi(t))}$. It means that $Z_t$ can be covered by $\ll e^{d(t-z_\psi(t))}$ many balls of $r$-radius since $Z_t\subseteq\displaystyle\bigcup_{i=1}^{p'}D_i$.
\end{proof}

\begin{prop}\label{Ztdim}
Let $0\leq s\leq mn$. If $\sum_{q= 1}^{\infty}\frac{1}{\psi(q)q^2}\left(\frac{q^{\frac{1}{n}}}{\psi(q)^{\frac{1}{m}}}\right)^{mn-s}<\infty$, then $\cH^{s}(\displaystyle\limsup_{t\to\infty} Z_t)=0$ and $\cH^{s+m}(\displaystyle\limsup_{t\to\infty} Z_t\times[0,1]^m)=0$.
\end{prop}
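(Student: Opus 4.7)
The plan is to apply the Hausdorff--Cantelli lemma (Lemma~\ref{hcan}) directly to the coverings produced by Proposition~\ref{covering}. For each $t\in\bN$, that proposition supplies a cover of $Z_t$ by $N_t := K e^{(m+n)(t-z_\psi(t))}$ balls of radius $r_t := \tfrac12 e^{-\frac{m+n}{mn}t}$. A direct exponent computation gives
\begin{equation*}
N_t(2r_t)^s \;\asymp\; e^{-(m+n)\bigl(z_\psi(t)-\frac{mn-s}{mn}t\bigr)},
\end{equation*}
so the Hausdorff--Cantelli sum for the first claim is, up to a constant, $\sum_{t\geq 1} e^{-(m+n)(z_\psi(t)-\frac{mn-s}{mn}t)}$. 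By Lemma~\ref{dualfg} this series converges if and only if $\sum_q \tfrac{1}{\psi(q)q^2}\bigl(\tfrac{q^{1/n}}{\psi(q)^{1/m}}\bigr)^{mn-s}$ converges, which is precisely the hypothesis. Lemma~\ref{hcan} then yields $\cH^s(\limsup_{t\to\infty} Z_t)=0$.

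For the second claim, I thicken the cover of $Z_t$ into a cover of $Z_t\times[0,1]^m \subset \bR^{mn+m}$. Working with the supremum norm throughout, so that products of sup-norm balls are sup-norm balls, I combine the $N_t$ balls covering $Z_t$ with a cover of $[0,1]^m$ by $\asymp r_t^{-m}$ balls of the same radius $r_t$. This produces $\asymp N_t\, r_t^{-m}$ balls of radius $r_t$ covering $Z_t\times[0,1]^m$, and the corresponding Hausdorff--Cantelli sum is
\begin{equation*}
\sum_t N_t r_t^{-m}(2r_t)^{s+m} \;\asymp\; \sum_t N_t(2r_t)^s,
\end{equation*}
which is the same series as before and hence converges. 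Since $\limsup_t(Z_t\times[0,1]^m) = (\limsup_t Z_t)\times[0,1]^m$ is contained in the limsup of these covering balls, Lemma~\ref{hcan} yields $\cH^{s+m}((\limsup_t Z_t)\times[0,1]^m)=0$.

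No serious obstacle is anticipated: the argument is essentially bookkeeping, once Proposition~\ref{covering} and the series equivalence of Lemma~\ref{dualfg} are in hand. The only mild subtlety is the product cover in the second part, where the choice of supremum norm is what makes the thickening by $[0,1]^m$ cost exactly a factor $r_t^{-m}$ in the number of balls of the same radius -- a factor that precisely absorbs the extra $m$ dimensions when passing from $\cH^s$ to $\cH^{s+m}$.
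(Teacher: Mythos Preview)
Your proposal is correct and follows essentially the same approach as the paper: invoke Lemma~\ref{dualfg} to translate the hypothesis into convergence of $\sum_t e^{-(m+n)(z_\psi(t)-\frac{mn-s}{mn}t)}$, feed the covers from Proposition~\ref{covering} into the Hausdorff--Cantelli lemma, and for the second claim thicken by covering $[0,1]^m$ with $\asymp r_t^{-m}=e^{\frac{m+n}{n}t}$ balls of the same radius. Your exponent bookkeeping and the remark about the supremum norm making the product cover work are exactly right.
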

\begin{proof}
By Lemma \ref{dualfg}, the assumption $\sum_{q= 1}^{\infty}\frac{1}{\psi(q)q^2}\left(\frac{q^{\frac{1}{n}}}{\psi(q)^{\frac{1}{m}}}\right)^{mn-s}<\infty$ is equivalent to $\sum_{t=1}^{\infty}e^{-(m+n)\left(z(t)-\frac{mn-s}{mn}t\right)}<\infty$. For each $t\in\bN$, let $D_{t1},D_{t2},\cdots, D_{tp_t}$ be the balls of radius $\frac{1}{2}{e^{-(\frac{1}{m}+\frac{1}{n})t}}$ covering $Z_t$ as in Proposition \ref{covering}. Note that $p_t$, the number of the balls, is not greater than $Ke^{(m+n)(t-z_{\psi}(t))}$ by Proposition \ref{covering}. By applying Lemma \ref{hcan} to a sequence of balls $\set{D_{tj}}_{t\in\bN, 1\leq j\leq p_t}$, we have $\cH^{s}(\displaystyle\limsup_{t\to\infty} Z_t)\leq \cH^{s}(\displaystyle\limsup_{t\to\infty} D_{tj})=0$. 

We prove the second statement by a similar argument. Proposition \ref{covering} implies that $Z_t\times [0,1]^m$ can be covered with $Ke^{\frac{m+n}{n}t}e^{(m+n)(t-z_{\psi}(t))}$ balls of radius $\frac{1}{2}{e^{-(\frac{1}{m}+\frac{1}{n})t}}$. Applying Lemma \ref{hcan} again, we 
have $\cH^{s+m}(\displaystyle\limsup_{t\to\infty} Z_t\times[0,1]^m)=0$.

\end{proof}
The convergent part of Theorem \ref{thm1} and \ref{thm2} follows this proposition.
\begin{proof}[Proof of Theorem \ref{thm1} and \ref{thm2}]
We first prove the singly metric case, Theorem \ref{thm2}. We claim that $\log(d\lambda_d(a_t\Lambda_A))\ge\Delta(a_t\Lambda_{A,\mb{b}})$ for every $\mb{b}\in\bR^m$. Let $v_1,\cdots,v_d$ be independent vectors satisfying $\|v_i\|\leq\lambda_d(a_t\Lambda_A)$ for $1\leq i\leq d$. Then there exists a vector of $a_t\Lambda_{A,\mb{b}}$ which can be written as a form of $\displaystyle\sum_{i=1}^{d}\alpha_iv_i$ for some $-1\leq \alpha_i\leq 1$'s, so the length of the shortest vector is $\leq\displaystyle\sum_{i=1}^{d}\|v_i\|$. Thus, $\Delta(a_t\Lambda_{A,\mb{b}})\leq\log\displaystyle\sum_{i=1}^{d}\|v_i\|\leq \log(d\lambda_d(a_t\Lambda_A))$. It implies $\widehat{D}_{m,n}^{\mb{b}}(\psi)^c\subseteq\displaystyle\limsup_{t\to\infty}\set{A\in [0,1]^{mn}: \Delta(a_t\Lambda_{A,\mb{b}})\ge z_\psi(t)-C_0}\subseteq\displaystyle\limsup_{t\to\infty}Z_t$ by Lemma \ref{Dani}, thus we obtain $\cH^{s}(\widehat{D}_{m,n}^{\mb{b}}(\psi)^c)\leq\cH^{s}(\displaystyle\limsup_{t\to\infty} Z_t)=0$ by Proposition \ref{Ztdim}. 

Similarly for the doubly metric case, together with the second statement of Proposition \ref{Ztdim}, $\widehat{D}_{m,n}(\psi)^c\subseteq\displaystyle\limsup_{t\to\infty}\set{(A,\mb{b})\in [0,1]^{mn+m}: \Delta(a_t\Lambda_{A,\mb{b}})\ge z_\psi(t)-C_0}\subseteq\displaystyle\limsup_{t\to\infty}Z_t\times[0,1]^m$ provides the proof of Theorem \ref{thm1}.
\end{proof}

\section{Proof of the divergent part}

\subsection{Notation and a transference lemma}\label{subsec4.1}
Let $d=m+n$ and assume that $\psi:[T_0,\infty)\to\R_{+}$ be a decreasing function satisfying $\lim_{T\to\infty}\psi(T)=0$. Denote by $\|\cdot\|_{\bZ}$ and $|\cdot|_{\bZ}$ the distance to the nearest integral vector and integer, respectively. Define the function $\widetilde{\psi} : [S_0,\infty)\to\R_{+}$ by \eq{
\widetilde{\psi}(S)=\left(\psi^{-1}(S^{-m})\right)^{-\frac{1}{n}},
} where $S_0 = \psi(T_0)^{-1/m}$.
We associate $\psi$-Dirichlet non-improvability with $\widetilde{\psi}$-approximability via a transference lemma as follows.

\begin{lem}[A transference lemma, \cite{Cas57}]\label{dual}
Given $(A,\mb{b})\in \widetilde{M}_{m,n}(\bR)$, if the system
\eq{
\| {^{t}A}\mb{x}\|_{\bZ}<d^{-1}|\mb{b}\cdot\mb{x}|_{\bZ}\widetilde{\psi}(S)\quad\text{and}\quad \|\mb{x}\|<d^{-1}|\mb{b}\cdot\mb{x}|_{\bZ}S
} has a nontrivial solution $\mb{x}\in\bZ^m$ for an unbounded set of $S\geq S_0$, then $(A,\mb{b})\in \widehat{D}_{m,n}(\psi)^{c}$. 
\end{lem}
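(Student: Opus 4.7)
The plan is to argue by contradiction: assume $(A,\mb{b})\in \widehat{D}_{m,n}(\psi)$ and, for each $S$ in the unbounded set supplied by the hypothesis, exhibit a $T=T(S)$ at which the $\psi$-Dirichlet system admits no integer solution. The correct coupling between $S$ and $T$ is dictated by the definition of $\widetilde{\psi}$: I will take $T:=\widetilde{\psi}(S)^{-n}$, which produces the clean identities $\psi(T)^{1/m}=S^{-1}$ and $T^{1/n}=\widetilde{\psi}(S)^{-1}$. Since $\lim_{T\to\infty}\psi(T)=0$ forces $\widetilde{\psi}(S)\to 0$ as $S\to\infty$, this $T(S)$ is also unbounded on the hypothesis set, so producing a contradiction at each such $T$ is enough to conclude $(A,\mb{b})\in\widehat{D}_{m,n}(\psi)^c$.

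For the core step, given a hypothetical $\psi$-Dirichlet solution $(\mb{p},\mb{q})$ at $T$, I would consider the scalar $\mb{x}\cdot(A\mb{q}+\mb{b}-\mb{p})$ and rewrite it as ${^tA}\mb{x}\cdot\mb{q}+\mb{b}\cdot\mb{x}-\mb{x}\cdot\mb{p}$. Decomposing ${^tA}\mb{x}=\mb{y}+\mb{r}$ with $\mb{y}\in\bZ^n$ the nearest integer vector and $\|\mb{r}\|=\|{^tA}\mb{x}\|_\bZ$, the integer scalars $\mb{y}\cdot\mb{q}$ and $\mb{x}\cdot\mb{p}$ can be absorbed when taking the distance of $\mb{b}\cdot\mb{x}$ to $\bZ$. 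Combined with the standard bounds $|\mb{x}\cdot(A\mb{q}+\mb{b}-\mb{p})|\leq m\|\mb{x}\|\,\|A\mb{q}+\mb{b}-\mb{p}\|$ and $|\mb{r}\cdot\mb{q}|\leq n\|\mb{r}\|\,\|\mb{q}\|$, and the strict $\psi$-Dirichlet inequalities, this yields the key bound
$$|\mb{b}\cdot\mb{x}|_\bZ \;\leq\; |\mb{x}\cdot(A\mb{q}+\mb{b}-\mb{p})|+|\mb{r}\cdot\mb{q}| \;<\; m\,\|\mb{x}\|\,\psi(T)^{1/m}+n\,\|{^tA}\mb{x}\|_\bZ\,T^{1/n}.$$

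Finally, substituting the dual-system hypotheses $\|\mb{x}\|<d^{-1}|\mb{b}\cdot\mb{x}|_\bZ S$ and $\|{^tA}\mb{x}\|_\bZ<d^{-1}|\mb{b}\cdot\mb{x}|_\bZ\widetilde{\psi}(S)$, together with the balancing identities $S\psi(T)^{1/m}=\widetilde{\psi}(S)T^{1/n}=1$, collapses the right-hand side to $\tfrac{m}{d}|\mb{b}\cdot\mb{x}|_\bZ+\tfrac{n}{d}|\mb{b}\cdot\mb{x}|_\bZ=|\mb{b}\cdot\mb{x}|_\bZ$, the desired strict contradiction. I do not foresee any real obstacle; the only points that need checking are that $|\mb{b}\cdot\mb{x}|_\bZ>0$ (immediate, since a nontrivial $\mb{x}$ has $\|\mb{x}\|>0$, forcing the right-hand side of the dual inequality for $\|\mb{x}\|$ to be strictly positive), that $T(S)\to\infty$ as $S$ ranges over the hypothesis set, and that the normalization constant $d^{-1}=(m+n)^{-1}$ in the hypothesis is exactly the one that makes the numerical cancellation $m/d+n/d=1$ produce a strict inequality rather than merely an equality.
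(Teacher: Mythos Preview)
Your proposal is correct and is essentially the same argument as the paper's: the paper invokes Cassels' transference inequality (Theorem~\rom{17}A, Chapter~\rom{5}) as a black box to obtain $|\mb{b}\cdot\mb{x}|_\bZ\leq d\max(T^{1/n}\|{^tA}\mb{x}\|_\bZ,\psi(T)^{1/m}\|\mb{x}\|)$ and then makes the same substitution $S=\psi(T)^{-1/m}$, $\widetilde\psi(S)=T^{-1/n}$, whereas you unwrap the citation and reprove that transference inequality directly via the scalar-product identity. The coupling $T=\widetilde\psi(S)^{-n}$ and the resulting contradiction $|\mb{b}\cdot\mb{x}|_\bZ<|\mb{b}\cdot\mb{x}|_\bZ$ are exactly the content of the paper's one-line deduction.
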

\begin{proof}
Using part A of Theorem \rom{17} in Chapter \rom{5} of \cite{Cas57} with $C=\psi(T)^{1/m}$ and $X=T^{1/n}$, the fact that 
\eq{
\|A\mb{q}-\mb{b}\|_{\bZ}\leq \psi(T)^{1/m}\quad\text{and}\quad \|\mb{q}\|\leq T^{1/n}
} for some $\mb{q}\in\bZ^n$ implies that 
\eq{
|\mb{b}\cdot\mb{x}|_{\bZ}\leq d\max(T^{1/n}\|^t A\mb{x}\|_{\bZ}, \psi(T)^{1/m}\|\mb{x}\|)
} holds for all $\mb{x}\in\bZ^m$. Thus the lemma follows with $S=\psi(T)^{-1/m}$ and $\widetilde{\psi}(S)=T^{-1/n}$ since $\lim_{T\to\infty}\psi(T)=0$.
\end{proof}

Thus we adopt the following notations for each $S\geq S_0$ and $0<\eps<1/2$ :
\begin{itemize}
	\item Let $W_{S,\eps}$ be the set of $A \in [0,1]^{mn}$ such that there exists $\mb{x}_{A,S} \in\bZ^m \setminus \{\mb{0}\}$ satisfying
\eq{
\| {^{t}A}\mb{x}_{A,S}\|_{\bZ}<d^{-1}\eps\widetilde{\psi}(S)\quad\text{and}\quad \|\mb{x}_{A,S}\|<d^{-1}\eps S.
}
	\item $\widehat{W}_{S,\eps}:= \{(A,\mb{b})\in [0,1]^{mn+m}: A\in W_{S,\eps}\text{ and } |\mb{b}\cdot\mb{x}_{A,S}|_{\bZ}>\eps\}$.

	\item For fixed $\mb{b}\in\bR^m$, let $W_{\mb{b},S,\eps}$ be the set of $A \in [0,1]^{mn}$ such that there exists $\mb{x} \in\bZ^m \setminus \{\mb{0}\}$ satisfying 
\begin{enumerate} [label=(\roman*)]
	\item $|\mb{b}\cdot\mb{x}|_{\bZ}>\eps$,
	\item $\| {^{t}A}\mb{x}\|_{\bZ}<d^{-1}\eps\widetilde{\psi}(S)\quad\text{and}\quad \|\mb{x}\|<d^{-1}\eps S.$
\end{enumerate}
	\item $W_{\mb{b},\eps}:=\displaystyle\limsup_{S\to\infty}W_{\mb{b},S,\eps}$.
\end{itemize}

Note that $A\in W_{S,\eps}$ if and only if 
\eq{
\| {^{t}A}\mb{x}_{A,S}\|_{\bZ}<\Psi_{\eps}(U) \quad\text{and}\quad \|\mb{x}_{A,S}\|<U,
} 
where 
\eqlabel{psieps}
{
\Psi_{\eps}(U) := d^{-1}\eps\widetilde{\psi}(d \eps^{-1}U).
}
By Lemma \ref{dual}, $\displaystyle\limsup_{S\to\infty}\widehat{W}_{S,\eps} \subset \widehat{D}_{m,n}(\psi)^{c}$ and $W_{\mb{b},\eps} \subset \widehat{D}_{m,n}^{\mb{b}}(\psi)^{c}$.

We remark that $\displaystyle\limsup_{S\to\infty}W_{S,\eps}$ is the set of $\Psi_{\eps}$-approximable matrices, that is, $\displaystyle\limsup_{S\to\infty}W_{S,\eps}=\{A\in[0,1]^{mn}: {^{t}A}\in W_{n,m}(\Psi_{\eps})\}$. Here and hereafter, as mentioned before in footnote \ref{fn1}, we adopt the slightly different definition for $\Psi_{\eps}$-approximability, where the inequality $\|{^{t}A}\mb{x}\|_{\bZ} < \Psi_{\eps}(\|\mb{x}\|)$ is used instead of \eqref{approx}. Then, $W_{\mb{b},\eps}$ can be considered as the set of $\Psi_{\eps}$-approximable matrices with solutions restricted on the set $\{\mb{x}\in\bZ^{m}:|\mb{b}\cdot\mb{x}|_{\bZ}>\eps\}$.

\subsection{Mass distributions on $\Psi_{\eps}$-approximable matrices}
In this subsection, we prove the divergent part of Theorem \ref{thm1} using mass distributions on $\Psi_{\eps}$-approximable matrices following \cite{AB18}.

\begin{lem}\label{dualsum}
For each $0\leq s\leq mn$ and $0<\eps<1/2$, let $U_0 = d^{-1}\eps S_0$. Then,
\eq{
\sum_{q=\lceil T_0\rceil}^{\infty}\frac{1}{\psi(q)q^2}\left(\frac{q^{\frac{1}{n}}}{\psi(q)^{\frac{1}{m}}}\right)^{mn-s}<\infty\iff
\sum_{h=\lceil U_{0}\rceil}^{\infty}h^{m+n-1}\left(\frac{{\Psi_{\eps}}(h)}{h}\right)^{s-n(m-1)}<\infty.
}
\end{lem}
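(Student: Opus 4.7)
The plan is to rewrite both series as integrals in a common parameter, where the equivalence becomes exactly the same integration-by-parts identity already used in the proof of Lemma~\ref{dualfg}. First I would pass from sums to integrals as at the start of the proof of Lemma~\ref{dualfg}: both series converge if and only if the improper integrals
\begin{equation*}
I_1 := \int_{T_0}^{\infty}\frac{1}{\psi(x)x^{2}}\left(\frac{x^{1/n}}{\psi(x)^{1/m}}\right)^{mn-s}dx, \quad I_2 := \int_{U_0}^{\infty} h^{m+n-1}\left(\frac{\Psi_{\eps}(h)}{h}\right)^{s-n(m-1)} dh
\end{equation*}
do. In $I_2$, the substitution $S = d\eps^{-1}h$ absorbs the parameters $d,\eps$ into the multiplicative constant $(d^{-1}\eps)^{m+n}$ and reduces $I_2$ to a scalar multiple of $\int_{S_0}^{\infty} S^{mn+m-1-s}\,\widetilde{\psi}(S)^{s-n(m-1)}\,dS$. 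Then, following Lemma~\ref{dualfg}, I would set $T = \widetilde{\psi}(S)^{-n}$ (equivalently $S=\psi(T)^{-1/m}$) and introduce $\lambda = \log T$, $P(\lambda):=-\log\psi(e^{\lambda})$. A direct calculation using $dS = \frac{1}{m}e^{P(\lambda)/m}\,dP(\lambda)$ and $\widetilde{\psi}(S)=e^{-\lambda/n}$ yields
\begin{equation*}
I_2 \;\asymp\; \int_{\log T_0}^{\infty} e^{-a\lambda + bP(\lambda)}\,dP(\lambda),\qquad I_1 = \int_{\log T_0}^{\infty} e^{-a\lambda + bP(\lambda)}\,d\lambda,
\end{equation*}
with $a := 1-(mn-s)/n$ and $b := 1+(mn-s)/m$; the expression for $I_1$ has already been derived in the proof of Lemma~\ref{dualfg}.

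It therefore remains to prove that these two integrals converge or diverge together. If $a\le 0$, i.e.\ $s\le n(m-1)$, then $e^{-a\lambda+bP(\lambda)}\ge e^{bP(\log T_0)}>0$ on $[\log T_0,\infty)$ (recall $P$ is nondecreasing and, by hypothesis, $P(\lambda)\to\infty$), so both integrals are automatically infinite. If $a>0$, integration by parts with $u=e^{-a\lambda}$ and $dv=e^{bP(\lambda)}\,dP(\lambda)$ gives, for every $B>\log T_0$,
\begin{equation*}
b\int_{\log T_0}^{B} e^{-a\lambda+bP(\lambda)}\,dP(\lambda) - a\int_{\log T_0}^{B} e^{-a\lambda+bP(\lambda)}\,d\lambda = e^{-aB+bP(B)} - e^{-a\log T_0+bP(\log T_0)},
\end{equation*}
and letting $B\to\infty$ the two integrals are finite together, exactly as in the proof of Lemma~\ref{dualfg}.

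The main (and only real) obstacle is controlling the boundary term $e^{-aB+bP(B)}$ as $B\to\infty$; however this is precisely the analytic point that is already addressed in the proof of Lemma~\ref{dualfg}, so no new ideas are needed and Lemma~\ref{dualsum} follows formally from the two substitutions above together with that boundary-term estimate.
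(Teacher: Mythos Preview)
Your proposal is correct and follows essentially the same route as the paper: reduce to the integral $\int_{S_0}^\infty S^{mn+m-1-s}\widetilde{\psi}(S)^{s-n(m-1)}\,dS$ via the scaling $S=d\eps^{-1}h$, change variables through $T=\psi^{-1}(S^{-m})$, and then compare with $I_1$ by integration by parts, handling the boundary term exactly as in Lemma~\ref{dualfg}. The only cosmetic difference is that you work in the logarithmic coordinates $\lambda,P(\lambda)$ of Lemma~\ref{dualfg} (so that both lemmas visibly reduce to the single comparison $\int e^{-a\lambda+bP}\,d\lambda$ versus $\int e^{-a\lambda+bP}\,dP$), whereas the paper stays with the variables $x,\psi(x)^{-1}$ and integrates by parts against $d(\psi(x)^{-1})^{n+1-s/m}$; the computations are equivalent.
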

\begin{proof}
Since $\Psi_{\eps}(h) =d^{-1}\eps\widetilde{\psi}(d \eps^{-1}h)$,
\eq{
\sum_{h=\lceil U_{0}\rceil}^{\infty}h^{m+n-1}\left(\frac{\Psi_{\eps}(h)}{h}\right)^{s-n(m-1)}<\infty\iff 
\sum_{q=\lceil S_{0}\rceil}^{\infty}q^{m+n-1}\left(\frac{\widetilde{\psi}(q)}{q}\right)^{s-n(m-1)}<\infty.
} Thus, similar to Lemma \ref{dualfg}, we may assume $mn-n< s \leq mn$ and replace the sums with integrals
\eq{
\int_{T_0}^{\infty}\frac{1}{\psi(x)x^2}\left(\frac{x^{\frac{1}{n}}}{\psi(x)^{\frac{1}{m}}}\right)^{mn-s}dx\quad\text{and}\quad
\int_{S_0}^{\infty}y^{m+n-1}\left(\frac{\widetilde{\psi}(y)}{y}\right)^{s-n(m-1)}dy,
}
respectively. Since $\widetilde{\psi}(y)=\psi^{-1}(y^{-m})^{-\frac{1}{n}}$, we have
\begin{align*}
\int_{S_0}^{\infty}y^{m+n-1}\left(\frac{\widetilde{\psi}(y)}{y}\right)^{s-n(m-1)}dy
&=\int_{S_0}^{\infty}y^{mn+m-1-s}\left(\psi^{-1}(y^{-m})\right)^{m-1-\frac{s}{n}}dy\\
&=\frac{1}{m}\int_{(S_0)^m}^{\infty}t^{n-\frac{s}{m}}\left(\psi^{-1}(t^{-1})\right)^{m-1-\frac{s}{n}}dt\\
&=\frac{1}{m}\int_{\psi^{-1}(S_{0}^{-m})}^{\infty} x^{m-1-\frac{s}{n}}\psi(x)^{-n+\frac{s}{m}}d\psi(x)^{-1}\\
&=\frac{1}{m}\left(n+1-\frac{s}{m}\right)^{-1}\int_{T_0}^{\infty} x^{m-1-\frac{s}{n}}d\left(\psi(x)^{-1}\right)^{n+1-\frac{s}{m}}
\end{align*}
Using integration by parts, 
\begin{align*}
&\int_{T_0}^{\infty} x^{m-1-\frac{s}{n}}d\left(\psi(x)^{-1}\right)^{n+1-\frac{s}{m}}\\
&=\left(\displaystyle\lim_{x\to\infty}x^{m-1-\frac{s}{n}}\psi(x)^{-n-1+\frac{s}{m}}-T_{0}^{m-1-\frac{s}{n}}\psi(T_0)^{-n-1+\frac{s}{m}}\right)\\
&+\left(\frac{s-n(m-1)}{n}\right)\int_{T_0}^{\infty} \psi(x)^{-n-1+\frac{s}{m}}x^{m-2-\frac{s}{n}}dx.
\end{align*}
Observe that 
\eq{
\int_{T_0}^{\infty} \psi(x)^{-n-1+\frac{s}{m}}x^{m-2-\frac{s}{n}}dx=\int_{T_0}^{\infty} \psi(x)^{-n-1+\frac{s}{m}}x^{m-1-\frac{s}{n}}d\log{x}.
}
Thus the convergence of $\int_{T_0}^{\infty} \psi(x)^{-n-1+\frac{s}{m}}x^{m-2-\frac{s}{n}}dx$ gives that \eq{\displaystyle\lim_{x\to\infty}x^{m-1-\frac{s}{n}}\psi(x)^{-n-1+\frac{s}{m}}<\infty.}
Hence the convergence of 
\eq{
\int_{T_0}^{\infty}\frac{1}{\psi(x)x^2}\left(\frac{x^{\frac{1}{n}}}{\psi(x)^{\frac{1}{m}}}\right)^{mn-s}dx\quad\text{or}\quad
\int_{S_0}^{\infty}y^{m+n-1}\left(\frac{\widetilde{\psi}(y)}{y}\right)^{s-n(m-1)}dy
}
implies the convergence of the other one since all summands are positive except the finite value $-T_{0}^{m-1-\frac{s}{n}}\psi(T_0)^{-n-1+\frac{s}{m}}$.
\end{proof}

\begin{lem}\cite[Section 5]{AB18}\label{mass}
Assume that \eq{\sum_{q=1}^{\infty}\frac{1}{\psi(q)q^2}\left(\frac{q^{\frac{1}{n}}}{\psi(q)^{\frac{1}{m}}}\right)^{mn-s}=\infty.} Fix $0<\eps<1/2$. Then, for any $\eta>1$, there exists a probability measure $\mu$ on $\displaystyle\limsup_{S\to\infty} W_{S,\eps}$ satisfying the condition that for any arbitrary ball $D$ of sufficiently small radius $r(D)$ we have 
\eq{
\mu(D)\ll \frac{r(D)^s}{\eta},
}
where the implied constant does not depend on $D$ or $\eta$. 
\end{lem}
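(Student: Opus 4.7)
The plan is to follow the Cantor-set construction of Allen and Beresnevich in \cite[Section 5]{AB18}. First I would apply Lemma \ref{dualsum} to rewrite the divergence hypothesis as
\[
\sum_{h\ge 1}h^{m+n-1}\left(\frac{\Psi_\eps(h)}{h}\right)^{s-n(m-1)}=\infty,
\]
where $\Psi_\eps$ is defined in \eqref{psieps}. This reframes the problem as a Jarn\'{i}k-type statement for the set of matrices whose transposes are $\Psi_\eps$-approximable in the sense of footnote \ref{fn1}; by the definitions in Subsection \ref{subsec4.1}, that set coincides with $\limsup_{S\to\infty}W_{S,\eps}$. Note that each $W_{S,\eps}$ is a neighbourhood of the $(m-1)n$-dimensional rational resonant hyperplanes $R_{\mb{x},\mb{p}}=\{A\in[0,1]^{mn}:{}^tA\mb{x}=\mb{p}\}$ with $\mb{x}\in\bZ^m\setminus\{0\}$, $\|\mb{x}\|<d^{-1}\eps S$, $\mb{p}\in\bZ^n$.

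Next I would construct a nested Cantor-like family of balls indexed by a tree. Choose a rapidly increasing sequence $S_1<S_2<\cdots$ with $S_{k+1}/S_k$ large (depending on $\eta$), and set $r_k\asymp \Psi_\eps(S_k)/S_k$. For each ball $B_k$ of generation $k$ and radius $r_k$, select sub-balls $B_{k+1}\subset B_k$ of radius $r_{k+1}$ centred on points of some $R_{\mb{x},\mb{p}}$ with $\|\mb{x}\|\asymp d^{-1}\eps S_{k+1}$; the divergence of the series above, together with a standard Fubini-type counting on the number of resonant hyperplanes meeting $B_k$, guarantees a pairwise $c r_{k+1}$-separated family with
\[
\sum_{B_{k+1}\subset B_k}r_{k+1}^s\;\asymp\;r_k^s.
\]
Letting $\mu$ be the weak-$*$ limit of the uniform measures on generation-$k$ balls, with $\mu(B_{k+1})=\mu(B_k)/N_k$ where $N_k$ is the number of chosen children of $B_k$, yields a probability measure supported on a compact subset of $\limsup_{S\to\infty}W_{S,\eps}$.

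The Frostman-type bound $\mu(D)\ll r(D)^s/\eta$ then follows by a scale-by-scale counting argument: for a ball $D$ with $r_{k+1}\le r(D)<r_k$, the ball $D$ meets at most $\lesssim(r(D)/r_{k+1})^s$ generation-$(k+1)$ balls, each carrying mass $\asymp r_{k+1}^s\cdot r_1^{-s}$ after normalisation. The extra factor $1/\eta$ is engineered into the construction by selecting only a $1/\eta$-proportion of the available resonant hyperplanes at each generation; since the divergence of the series survives multiplication of $\Psi_\eps$ by any positive constant, the resulting Cantor set is still contained in $\limsup_{S\to\infty}W_{S,\eps}$, while the mass at every ball is diluted by the prescribed factor.

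The main obstacle is verifying the uniform Frostman bound at \emph{all} scales rather than just the discrete scales $r_k$ of the construction, and this reduces to a counting problem: estimating, in every ball $B_k$ of radius $r_k$, the number of $r_{k+1}$-neighbourhoods of hyperplanes $R_{\mb{x},\mb{p}}$ with $\|\mb{x}\|\asymp d^{-1}\eps S_{k+1}$ that can be packed, and ensuring this count matches the divergence rate from Lemma \ref{dualsum}. This counting combines a dyadic decomposition over $\|\mb{x}\|$, the codimension-$n$ geometry of the resonant hyperplanes, and the width $\Psi_\eps(S_{k+1})/\|\mb{x}\|$ of their neighbourhoods. Once this step is carried out, the remainder is a verbatim application of \cite[Section 5]{AB18} to our setting.
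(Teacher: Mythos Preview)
Your proposal is correct and follows exactly the route the paper takes: it applies Lemma~\ref{dualsum} to convert the divergence hypothesis into the Jarn\'{i}k divergence condition for $W_{n,m}(\Psi_\eps)$, identifies $\limsup_{S\to\infty}W_{S,\eps}$ with the (transposed) $\Psi_\eps$-approximable set, and then invokes the Cantor-type construction of \cite[Section~5]{AB18} verbatim. One small correction to your sketch: the factor $1/\eta$ in \cite{AB18} is not obtained by thinning at \emph{every} generation (which would compound), but rather by choosing the first scale $S_1$ large enough that the initial generation already contains at least $\eta$ separated balls, so each carries mass $\le 1/\eta$ and the bound propagates.
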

\begin{proof} 
Note that $\displaystyle\limsup_{S\to\infty} W_{S,\eps}=\{A\in[0,1]^{mn}: {^{t}A} \in W_{n,m}(\Psi_{\eps})\}$. By Lemma \ref{dualsum}, $\sum_{h=1}^{\infty}h^{m+n-1}\left(\frac{{\Psi_{\eps}}(h)}{h}\right)^{s-n(m-1)}=\infty$, which is the divergent assumption of Jarn\'{i}k's Theorem (Theorem \ref{Jarnik}) for $W_{n,m}(\Psi_{\eps})$. From the proof of Jarn\'{i}k's Theorem in \cite{AB18} and the construction of a probability measure in \cite[Section 5]{AB18} we can obtain a probability measure $\mu$ on $\displaystyle\limsup_{S\to\infty} W_{S,\eps}$ satisfying the above condition.
\end{proof}

Let us give a proof of the divergence part of Theorem \ref{thm1}.

\begin{proof}[Proof of Theorem \ref{thm1}]
If $s=mn+m$, then it follows from Theorem \ref{KW}. Assume that $m\leq s<mn+m$ and fix $0<\eps<1/2$. For any fixed $\eta>1$, let $\mu$ be a probability measure on $\displaystyle\limsup_{S\to\infty} W_{S,\eps}$ as in Lemma \ref{mass} with $s-m$ instead of $s$. Consider the product measure $\nu=\mu\times m_{\bR^m}$, where $m_{\bR^{m}}$ is the canonical Lebesgue measure on $\bR^m$, and let $\pi_{1}$ and $\pi_{2}$ be the natural projections from $\bR^{mn+m}$ to $\bR^{mn}$ and $\bR^{m}$, respectively.  
For any fixed integer $N\geq1$, let $V_{S,\eps}=W_{S,\eps}\setminus \displaystyle\bigcup_{k=N}^{S-1}W_{k,\eps}$ and $\widehat{V}_{S,\eps}=\{(A,\mb{b})\in \widehat{W}_{S,\eps}: A\in V_{S,\eps}\}$ and $E_{A,S,\eps}=\{\mb{b}\in[0,1]^m:|\mb{b}\cdot\mb{x}_{A,S}|_{\bZ}>\eps \}$. 
Note that $m_{\bR^m}(E_{A,S,\eps})\geq1-2\eps$. Using Fubini's theorem,  we have
\begin{align*}
\nu(\bigcup_{S\geq N} \widehat{W}_{S,\eps})&=\nu(\bigcup_{S\geq N} \widehat{V}_{S,\eps})=\sum_{S\geq N}\nu(\widehat{V}_{S,\eps})\\
&\geq \sum_{S\geq N}(1-2\eps)\mu(V_{S,\eps})=(1-2\eps)\mu(\bigcup_{S\geq N} W_{S,\eps})\\
&=1-2\eps.
\end{align*}
Since $N\geq1$ is arbitrary, we have $\nu(\displaystyle\limsup_{S\to\infty}\widehat{W}_{S,\eps})\geq1-2\eps$. 

For any arbitrary ball $B\subset \bR^{mn+m}$ of sufficiently small radius $r(B)$, we have 
\eqlabel{MDEQ}{
\nu(B)=\mu(\pi_1(B))\times m_{\bR^{m}}(\pi_2(B))\ll \frac{r(B)^{s}}{\eta},
}
where the implied constant does not depend on $B$ or $\eta$. If $0 \leq s < m$, we have \eqref{MDEQ} with $\mu$ in Lemma \ref{mass} with $s=0$.

By the Mass Distribution Principle (Lemma \ref{MDP}) and Lemma \ref{dual}, we have 
\eq{
\cH^{s}(\widehat{D}_{m,n}(\psi)^{c})\geq\cH^{s}(\displaystyle\limsup_{S\to\infty}\widehat{W}_{S,\eps})\gg (1-2\eps)\eta
}
and the proof is finished by taking $\eta\to\infty$.

\end{proof}

\subsection{Local ubiquity for $W_{\mb{b},\eps}$} 
The singly metric case is more complicated than the doubly metric case. In this subsection, we will prove Theorem \ref{thm2} by establishing the ubiquitous system for $W_{\mb{b},\eps}$ with an appropriate $\eps$ as follows.

For $\mb{b}=(b_1,\dots,b_m)\in \bR^{m}\setminus\bZ^{m}$, define 
\eqlabel{beps}{
\eps(\mb{b}):=\min_{1\leq j\leq m,\ |b_{j}|_{\bZ}>0} \frac{|b_{j}|_{\bZ}}{4}.
}
Note that the fact that $\mb{b}\in \bR^{m}\setminus\bZ^{m}$ implies $\eps(\mb{b})>0$.
The following lemma will be used when we count the number of integral vectors $\mb{z}\in\bZ^{m}$ such that 
\eqlabel{BadP}{
|\mb{b}\cdot\mb{z}|_{\bZ} \leq \eps(\mb{b}). 
}

\begin{lem}\label{Count}
For $\mb{b}=(b_1,\dots,b_m)\in \bR^m\setminus\bZ^m$, let $\eps(\mb{b})$ be as in \eqref{beps} and $1\leq i \leq m$ be an index such that $\eps(\mb{b})=\frac{|b_{i}|_{\bZ}}{4}$.
Then, for any $\mb{x}\in\bZ^{m}$, at most one of $\mb{x}$ and $\mb{x}+\mb{e}_{i}$ satisfies \eqref{BadP},
where $\mb{e}_{i}$ denotes the vector with a $1$ in the $i$th coordinate and $0$'s elsewhere.
\end{lem}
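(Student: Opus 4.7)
The plan is to argue by contradiction using the triangle inequality for $|\cdot|_{\bZ}$. Suppose, toward a contradiction, that both $\mb{x}$ and $\mb{x}+\mb{e}_{i}$ satisfy \eqref{BadP}. The key observation is that these two dot products differ by exactly $b_{i}$: we have
\[
\mb{b}\cdot(\mb{x}+\mb{e}_{i}) - \mb{b}\cdot\mb{x} = b_{i}.
\]
Since the function $|\cdot|_{\bZ}$ is invariant under integer translation and satisfies the triangle inequality, taking the ``integer distance'' of both sides yields
\[
|b_{i}|_{\bZ} \;\leq\; |\mb{b}\cdot(\mb{x}+\mb{e}_{i})|_{\bZ} + |\mb{b}\cdot\mb{x}|_{\bZ} \;\leq\; 2\eps(\mb{b}).
\]

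Next I would substitute the defining relation $\eps(\mb{b}) = |b_{i}|_{\bZ}/4$ from the hypothesis on the index $i$. This gives $|b_{i}|_{\bZ} \leq |b_{i}|_{\bZ}/2$, which forces $|b_{i}|_{\bZ} = 0$. However, by the definition of $\eps(\mb{b})$ in \eqref{beps} the minimum is taken over indices $j$ with $|b_{j}|_{\bZ} > 0$, and the fact that $\mb{b}\in \bR^{m}\setminus\bZ^{m}$ guarantees this set of indices is nonempty and that $\eps(\mb{b})>0$. In particular the chosen index $i$ satisfies $|b_{i}|_{\bZ}>0$, contradicting $|b_{i}|_{\bZ}=0$.

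There is no substantial obstacle here; the statement is essentially a one-line pigeonhole/triangle-inequality fact once one notices that the displacement $\mb{e}_{i}$ contributes exactly $b_{i}$ to the dot product and that $\eps(\mb{b})$ has been defined so that $2\eps(\mb{b}) < |b_{i}|_{\bZ}$ strictly. The only minor point worth flagging in the write-up is to justify why $i$ as in the statement actually exists (which follows from $\mb{b}\notin\bZ^m$, ensuring the minimum in \eqref{beps} is attained at some coordinate with positive $|b_{i}|_{\bZ}$), so that the contradiction $|b_{i}|_{\bZ}\leq|b_{i}|_{\bZ}/2$ with $|b_{i}|_{\bZ}>0$ is genuine.
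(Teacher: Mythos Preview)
Your proof is correct and essentially the same as the paper's: both use the triangle inequality for $|\cdot|_{\bZ}$ together with the relation $\mb{b}\cdot(\mb{x}+\mb{e}_i)-\mb{b}\cdot\mb{x}=b_i$ and the choice $\eps(\mb{b})=\tfrac{1}{4}|b_i|_{\bZ}$. The paper phrases it as a direct argument (if $\mb{x}$ satisfies \eqref{BadP} then $|\mb{b}\cdot(\mb{x}+\mb{e}_i)|_{\bZ}\geq |b_i|_{\bZ}-\eps(\mb{b})>\eps(\mb{b})$) rather than by contradiction, but this is the same computation.
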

\begin{proof}
Observe that if $|\mb{b}\cdot\mb{x}|_{\bZ}\leq\eps(\mb{b})$, then
\eq{
\big| |\mb{b}\cdot(\mb{x}\pm\mb{e}_{i})|_{\bZ} - |\pm b_{i}|_{\bZ}\big| \leq |\mb{b}\cdot\mb{x}|_{\bZ} \leq \eps(\mb{b}).
} By definition of $\eps(\mb{b})$, we have
\eq{
 |\mb{b}\cdot(\mb{x}\pm\mb{e}_{i})|_{\bZ} \geq |b_{i}|_{\bZ} - \eps(\mb{b}) > \eps(\mb{b}). 
}
\end{proof}

Now we fix $\mb{b}\in\bR^{m}\setminus\bZ^{m}$ and write $\eps_{0}:=\eps(\mb{b})$ and $\Psi_{0}:=\Psi_{\eps_{0}}$ as we set in \eqref{psieps} and \eqref{beps}.
With notations in Subsection \ref{subsec2.3}, let 
\begin{align*}
&J:= \{(\mb{x},\mb{y})\in \bZ^{m}\times\bZ^{n} : \|\mb{y}\|\leq m\|\mb{x}\| \text{ and }|\mb{b}\cdot\mb{x}|_{\bZ}>\eps_{0} \},\quad \Psi(h):=\frac{\Psi_{0}(h)}{h},\\ 
&\al:=(\mb{x},\mb{y})\in J,\quad \beta_{\al}:= \|\mb{x}\|,\quad R_{\al}:=\{A\in [0,1]^{mn} : {^{t}A}\mb{x}=\mb{y}\}.
\end{align*}
Note that $W_{\mb{b},\eps_{0}}=\Lam(\Psi)$ and the family $\cR$ of resonant sets $R_{\al}$ consists of $(m-1)n$-dimensional, rational hyperplanes.

By Lemma \ref{dualsum}, we may assume that $\sum_{h=1}^{\infty} h^{m+n-1} \left(\frac{\Psi_{0}(h)}{h}\right)^{s-n(m-1)}=\infty$.
Then we can find a strictly increasing sequence of positive integers $\{h_{i}\}_{i\in\bN}$ such that
\eq{
\sum_{h_{i-1}< h\leq h_{i}} h^{m+n-1} \left(\frac{\Psi_{0}(h)}{h}\right)^{s-(m-1)n} >1
} and $h_{i}>2h_{i-1}$. Put $\om(h):=i^{\frac{1}{n}}$ if $h_{i-1} < h\leq h_{i}$. Then $\om$ is $2$-regular and 
\eq{
\sum_{h=1}^{\infty} h^{m+n-1} \left(\frac{\Psi_{0}(h)}{h}\right)^{s-n(m-1)}\om(h)^{-n}=\infty.
}
For a constant $c>0$, define the ubiquitous function $\rho_{c}:\bR^{+}\to\bR^{+}$ by 
\eqlabel{UF}{
\rho_{c}(h):=
\begin{dcases}
    ch^{-\frac{1+n}{n}}       & \quad \text{if } m=1,\\
    ch^{-\frac{m+n}{n}}\om(h)   & \quad \text{if } m\geq 2,
\end{dcases}
}
Clearly the ubiquitous function is $2$-regular. 
\begin{thm}\label{LUb}
The pair $(\cR,\beta)$ is a locally ubiquitous system relative to $\rho=\rho_{c}$ for some constant $c>0$.
\end{thm}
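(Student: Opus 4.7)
The plan is to establish local ubiquity by combining a Minkowski-type covering argument with the density dichotomy provided by Lemma \ref{Count}. The proof splits naturally into two cases, $m = 1$ (where $\rho_c$ carries no $\om$-factor) and $m \geq 2$ (where $\rho_c$ carries the enlargement $\om(h)$), reflecting the relative effectiveness of Lemma \ref{Count} in each setting.

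First, I would prove a classical-style local ubiquity for the family of all resonant hyperplanes $R_{(\mb{x},\mb{y})}$, momentarily ignoring the constraint $(\mb{x},\mb{y}) \in J$, at the reduced scale $\rho_0(h) = c_0 h^{-(m+n)/n}$. For each $A \in [0,1]^{mn}$ and each large $N$, Minkowski's convex body theorem applied to the unimodular lattice $\{({^tA}\mb{x}-\mb{y}, \mb{x}) : \mb{x}\in\bZ^m, \mb{y}\in\bZ^n\}$ yields a nonzero integer pair $(\mb{x}, \mb{y})$ with $\|\mb{x}\| \leq 2^N$ and $\|{^tA}\mb{x}-\mb{y}\| \leq 2^{-Nm/n}$. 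The elementary bound $\text{dist}(A, R_{(\mb{x},\mb{y})}) \leq \|{^tA}\mb{x}-\mb{y}\|/\|\mb{x}\|$ then places $A$ in the $\rho_0(\|\mb{x}\|)$-neighborhood of $R_{(\mb{x},\mb{y})}$, and the triangle inequality together with $A \in [0,1]^{mn}$ enforces $\|\mb{y}\| \leq m\|\mb{x}\|$. To localize the scale to $2^{N-1} < \|\mb{x}\| \leq 2^N$, one bounds each small-scale slab by $|\Del(R_{(\mb{x},\mb{y})}, \rho_0(2^N)) \cap B| \lesssim r^{(m-1)n}\rho_0(2^N)^n$ where $r$ is the radius of $B$, and sums over admissible $(\mb{x},\mb{y})$ with $\|\mb{x}\| \leq 2^{N-1}$ to obtain a total of order $c_0^n |B|$, strictly less than $|B|$ for $c_0$ chosen small. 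This yields a classical ubiquity constant $\tau > 0$.

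Next, I would incorporate the constraint $(\mb{x}, \mb{y}) \in J$ via Lemma \ref{Count}. The key density observation is that the set of bad $\mb{x}$'s, those with $|\mb{b}\cdot\mb{x}|_{\bZ}\leq\eps_0$, has density at most $1/2$ in $\bZ^m$: partitioning $\bZ^m$ into cosets of $\bZ\mb{e}_i$ with $i$ the index from Lemma \ref{Count}, consecutive $\mb{x}$ and $\mb{x}+\mb{e}_i$ in a coset cannot both be bad. For $m = 1$, this half-density combined with a counting of the number of distinct $x$-values appearing among Dirichlet solutions for $A \in B$ delivers local ubiquity at scale $\rho_c = \rho_0$ directly. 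For $m \geq 2$, the proof uses the epoch partition $\{h_i\}$ built into the construction of $\om$: within each epoch $(h_{i-1}, h_i]$ the partial sum is at least $1$, and the enlargement $\om(h) = i^{1/n}$ provides enough additional room in the neighborhoods $\Del(R_\al, \rho_c(\|\mb{x}_\al\|))$ to absorb both the density-$1/2$ loss and any irregularity in the distribution of scales on which the divergence series is concentrated.

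The main obstacle is precisely this transition from the classical ubiquity to the constrained one. A naive attempt to replace a bad $\mb{x}$ by the good $\mb{x}+\mb{e}_i$ fails: the row vector ${^tA}\mb{e}_i$ has size of order $1$, vastly larger than the required precision $2^{-Nm/n}$, so rebalancing $\mb{y}$ cannot restore the approximation quality. Consequently, one cannot argue pointwise on each $\mb{x}$; the argument must be a density/counting estimate on bad $\mb{x}$'s within the classical ubiquity set, with the enlargement by $\om$ (for $m \geq 2$) quantitatively absorbing the loss. Verifying that the good $\mb{x}$'s alone still cover a positive fraction of $B$, with the correct dependence on $c$, is the technical heart of the proof.
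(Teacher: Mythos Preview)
You have correctly located the crux of the argument --- the passage from classical ubiquity for all hyperplanes to ubiquity for those with $|\mb{b}\cdot\mb{x}|_{\bZ}>\eps_0$ --- and you are right that the pointwise substitution $\mb{x}\mapsto\mb{x}+\mb{e}_i$ cannot work. But your proposed resolution is not a proof: the half-density of good $\mb{x}$'s in $\bZ^m$ furnished by Lemma~\ref{Count} says nothing about which $\mb{x}$'s actually arise as Minkowski solutions for points of $B$, and there is no a priori reason the Minkowski solution cannot land on a bad $\mb{x}$ for every $A\in B$ simultaneously. The phrase ``counting of the number of distinct $x$-values appearing among Dirichlet solutions'' does not specify a mechanism, and for $m\geq2$ you never explain concretely what quantity the $\om$-enlargement is meant to dominate or how a density-$\tfrac{1}{2}$ arithmetic statement is converted into a geometric covering statement. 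There is also a quantitative tension in your first step that you do not address: Minkowski only places $A$ in $\Del(R_{(\mb{x},\mb{y})},\rho_0(\|\mb{x}\|))$ when $c_0\gtrsim1$, whereas your small-scale exclusion ``total of order $c_0^n|B|$, strictly less than $|B|$'' requires $c_0$ small.

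The paper abandons the pointwise Minkowski viewpoint entirely. For $m=1$ it is a direct counting argument: one shows $\#T(N)\gtrsim|B|\,2^{N(n+1)}$ where $T(N)$ is the set of $\mb{y}/x\in B$ with $2^{N-1}<x\leq2^N$, $\gcd(x,\mb{y})=1$, and $|bx|_{\bZ}>\eps_0$ --- here the Euler-$\vphi$ asymptotic handles coprimality and Lemma~\ref{Count} removes at most half the denominators --- and then a separate pairwise-overlap count (bounding solutions to $0<|ty-xs|\leq A$) shows that at least half of $T(N)$ are isolated, so their disjoint balls already cover $\gtrsim|B|$. For $m\geq2$ the paper runs a mean-and-variance argument in the style of Sprind\v{z}uk and Dickinson--Velani: one restricts to a thin index set $I(N)$ with $N\leq x_1\leq2N$ but $x_i\leq N/\om(2N)^{1/(2(m-1))}$ for $i\geq2$, applies Lemma~\ref{Count} in the $x_1$-direction to retain a positive proportion of $I(N)$ after imposing the constraint, and computes $\mu_N\gtrsim\om(2N)^{1/2}\to\infty$ and $\sigma_N^2\leq\mu_N$; Chebyshev then gives $|\{A:\nu_N(A)=0\}|\leq\sigma_N^2/\mu_N^2\leq1/\mu_N\to0$. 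The role of $\om$ is thus to drive the first moment to infinity so that the second-moment bound becomes effective --- a mechanism quite different from the informal ``additional room'' you invoke.
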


\begin{proof}[Proof of Theorem \ref{thm2}]
For fixed $\mb{b}=(b_1,\dots.b_m)\in \bR^{m}\setminus\bZ^{m}$, assume that $b_i \notin \bZ$. If $b_i$ is rational, then there is $0<\eps<1/2$ such that $|kb_i|_{\bZ}>\eps$ for infinitely many positive integer $k$. If $b_i$ is irrational, then the set $\{kb_{i} \pmod{1} :k\in\bZ\}$ is dense in $[0,1]$. Hence, for any fixed $0<\eps<1/2$, $|kb_{i}|_{\bZ}>\eps$ holds for infinitely many positive integer $k$. Let us denote that increasing sequence by $(k_{j})_{j=1}^{\infty}$. This observation implies that the set $\{A\in[0,1]^{mn}: \|{^{t}Ak_{j}\mb{e}_i}\|_{\bZ}=0\}$, which is the finite union of $(m-1)n$-dimensional hyperplanes, is a subset of $W_{\mb{b},\eps}$ for each $j\in\bN$. Hence for any $0\leq s\leq (m-1)n$
\eq{
\cH^{s}(D_{m,n}^{\mb{b}}(\psi)^{c})\geq\cH^{s}(W_{\mb{b},\eps})=\cH^{s}([0,1]^{mn}).
}

Now assume that $(m-1)n<s\leq mn$. It follows from Theorem \ref{UbThm1} and Theorem \ref{LUb} that 
\eq{
\cH^{s}(D_{m,n}^{\mb{b}}(\psi)^{c})\geq\cH^{s}(W_{\mb{b},\eps_{0}})=\cH^{s}([0,1]^{mn}).
}
Here, we use the fact that the divergence and convergence of the sums
\eq{
\sum_{N=1}^{\infty} 2^{N\al}f(2^{N})\quad\text{and}\quad \sum_{h=1}^{\infty} h^{\al-1}f(h)\quad \text{coincide}
} for any monotonic function $f:\bR^{+}\to\bR^{+}$ and $\al\in\bR$.  
\end{proof}

Recall that we adopt the supremum norm $\|\cdot\|$ on $[0,1]^{mn}$ following Subsection \ref{subsec2.3}.
We consider $m=1$ and $m\geq 2$, separately.

\begin{proof}[Proof of Theorem \ref{LUb} for $m=1$]
Note that, for $(x,\mb{y})\in J$, the resonant set $R_{(x,\mb{y})}$ is the one point set $\{\frac{\mb{y}}{x}:=\left(\frac{y_1}{x},\dots,\frac{y_n}{x}\right)\}$ and $\Del(R_{x,\mb{y}}, \rho(2^{N}))=B(\frac{\mb{y}}{x}, \rho(2^N))$, the ball of radius $\rho(2^N)$ centered at $\frac{\mb{y}}{x}$. We basically follow the strategy in \cite[Chapter 3]{Tho04}.

Let $B$ an arbitrary square in $[0,1]^{n}$ and write $B=\prod_{i=1}^{n}[l_i , u_i]$, $\mb{l}=(l_1,\dots,l_n)$, $\mb{u}=(u_1,\dots,u_n)$. We restrict $\mb{y}$ to $\gcd(x,\mb{y})=1$ and $\frac{\mb{y}}{x}\in B$.
Observe that
\eqlabel{Ob1}{
|B\cap \Del(\rho,N)| \geq \left|\bigcup_{\substack{2^{N-1}< x\leq 2^{N}\\ |b\cdot x|_{\bZ}>\eps_{0}}}\bigcup_{\substack{x\mb{l}\leq\mb{y}\leq x\mb{u} \\\gcd(x,\mb{y})=1}}B\left(\frac{\mb{y}}{x},\rho(2^N)\right)\right|+O(\rho(2^{N})).
}
Here, $x\mb{l}< \mb{y}< x\mb{u}$ means that $xl_{i}< y_i < xu_{i}$ for all $1\leq i\leq n$.
Let 
\begin{align*}
&T(N):=\left\{\frac{\mb{y}}{x}\in\bQ^{n}: (x,\mb{y})\in J,\ \gcd(x,\mb{y})=1,\ x\mb{l}\leq\mb{y}\leq x\mb{u},\ 2^{N-1}< x\leq 2^{N} \right\}, \\
&G(N):=\left\{\frac{\mb{y}}{x}\in T(N) : B\left(\frac{\mb{y}}{x}, \rho(2^N)\right)\cap B\left(\frac{\mb{s}}{t}, \rho(2^N)\right)=\varnothing,\ \forall \frac{\mb{s}}{t} \left(\neq \frac{\mb{y}}{x}\right)\in T(N)\right\}.
\end{align*}

\begin{lem}\label{CountLem}
For $N$ large enough
\begin{enumerate}
\item\label{CL1} $\# T(N) \geq c_{1}|B|2^{N(n+1)}$ for some constant $0<c_{1}<1$.
\item\label{CL2} $\# G(N) \geq \frac{1}{2} \# T(N)$.
\end{enumerate}
\end{lem}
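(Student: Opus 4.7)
\textbf{Plan for \eqref{CL1}.} The plan is to apply Lemma \ref{Count} with $\mb{e}_i=1$ (the $m=1$ specialization) to see that at least half of the integers $x\in(2^{N-1},2^N]$ satisfy $|bx|_{\bZ}>\eps_0$. For each such $x$ and $N\geq N_0(B)$, the standard density of primitive lattice vectors in an expanding box gives $\#\{\mb{y}\in\bZ^n:x\mb{l}\leq\mb{y}\leq x\mb{u},\ \gcd(\mb{y},x)=1\}\gg x^n|B|$, using $\prod_p(1-p^{-n})=1/\zeta(n)>0$ when $n\geq 2$ and the interval count $\#\{y\in[xl,xu]:\gcd(y,x)=1\}=(u-l)\phi(x)+O(x^\eps)$ together with $\sum_{x\in(2^{N-1},2^N]}\phi(x)\asymp 2^{2N}$ when $n=1$. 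Summing over admissible $x$ yields $\#T(N)\gg|B|\cdot 2^{N(n+1)}$, which gives the desired $c_1$.

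\textbf{Plan for \eqref{CL2}.} I split into two cases according to $n$. When $n=1$, distinct reduced fractions with denominators at most $2^N$ have pairwise distance $\geq 2^{-2N}$, whereas $2\rho(2^N)=2c\cdot 2^{-2N}<2^{-2N}$ for $c<1/2$; hence the $\rho(2^N)$-balls around points of $T(N)$ are pairwise disjoint, so $G(N)=T(N)$. When $n\geq 2$, let
\eq{
P_{\mathrm{off}}:=\#\{((x,\mb{y}),(t,\mb{s}))\in T(N)^2 : \mb{y}/x\neq\mb{s}/t,\ \|\mb{y}/x-\mb{s}/t\|<2\rho(2^N)\};
}
since every point of $T(N)\setminus G(N)$ has at least one close neighbor, $\#(T(N)\setminus G(N))\leq P_{\mathrm{off}}$. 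For $N$ large enough that $4\rho(2^N)t\leq 4c\cdot 2^{-N/n}<1$, each triple $(x,t,\mb{y})$ admits at most one candidate $\mb{s}$, existing iff $|y_it/x|_{\bZ}<2\rho(2^N)t$ for every $i$. By the equidistribution of $y_it\bmod x$, the density of such $y_i$ in $[xl_i,xu_i]$ is $\asymp 4\rho(2^N)t$, so for each $(x,t)$ the count of admissible $\mb{y}$ with $\mb{y}/x\in B$ is $\ll(2\rho(2^N)tx)^n|B|$. Summing over $(x,t)\in(2^{N-1},2^N]^2$:
\eq{
P_{\mathrm{off}}\ll\rho(2^N)^n|B|\Big(\sum_{x\in(2^{N-1},2^N]}x^n\Big)^2\ll c^n|B|\cdot 2^{N(n+1)}.
}
Choosing $c$ small enough (relative to $c_1$) so that the implied constant times $c^n$ is at most $c_1/2$ yields $\#(T(N)\setminus G(N))\leq\frac{1}{2}\#T(N)$, completing the proof.

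\textbf{Main obstacle.} The delicate step is the equidistribution estimate, uniformly in $\gcd(x,t)$. The count $\#\{y_i\in[xl_i,xu_i]:|y_it/x|_{\bZ}<\epsilon\}$ has main term $2\epsilon x(u_i-l_i)$ with boundary errors $O(x/\gcd(x,t)+\gcd(x,t)(u_i-l_i))$; for $n\geq 2$ and $N\geq N_0(B)$, the main term dominates because $\epsilon x(u_i-l_i)\gg 2^{N(n-1)/n}(u_i-l_i)\to\infty$, but verifying this rigorously across the full range of $\gcd(x,t)$, particularly for small $|B|$, is where care is required.
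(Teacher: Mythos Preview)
Your plan for \eqref{CL1} is essentially the paper's; the only slip is in the $n=1$ case, where ``half the $x$'s are good'' does not by itself yield $\sum_{\text{good }x}\phi(x)\gg 2^{2N}$---one needs the subtraction $\sum_x\phi(x)-\sum_{\text{bad }x}x\geq(\tfrac{3}{\pi^2}-\tfrac{1}{4}-o(1))\cdot\tfrac{3}{4}\,2^{2N}>0$, exactly as the paper does. Your treatment of \eqref{CL2} for $n=1$ via the gap $|\tfrac{y}{x}-\tfrac{s}{t}|\geq (xt)^{-1}\geq 2^{-2N}$ between distinct reduced fractions is correct and pleasantly simpler than the paper's unified argument.

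The genuine gap is in \eqref{CL2} for $n\geq 2$. By relaxing to the condition $|y_it/x|_\bZ<\epsilon$ (with $\epsilon=2\rho(2^N)t$), thereby dropping the constraint $s_i\in[tl_i,tu_i]$, you lose the factor $(u_i-l_i)$ in the error: the count of such $y_i$ in $[xl_i,xu_i]$ is $2\epsilon x(u_i-l_i)+O(\epsilon x/d + d(u_i-l_i))$ with $d=\gcd(x,t)$, and for $d=1$ the error $\epsilon x$ equals the main term divided by $(u_i-l_i)$. Your assertion that ``the main term dominates'' is therefore false. Concretely, multiplying over $i$ and summing the pure error contribution $(\epsilon x/d)^n=(2\rho(2^N)\,\mathrm{lcm}(x,t))^n$ over $x,t\in(2^{N-1},2^N]$ gives a term $\asymp c^n 2^{N(n+1)}$ carrying \emph{no} factor of $|B|$, so no choice of $c$ independent of $B$ can force $P_{\mathrm{off}}<\tfrac{c_1}{2}|B|\,2^{N(n+1)}$ for all balls $B$. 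The paper avoids this by keeping both range constraints and proving directly that for fixed $(x,t)$ the number of pairs $(y_i,s_i)$ with $0<|ty_i-xs_i|\leq A$, $y_i\in[xl_i,xu_i]$, $s_i\in[tl_i,tu_i]$ is at most $2(u_i-l_i)A$: writing $d=\gcd(x,t)$, $x'=x/d$, $t'=t/d$, the congruence $t'y_i\equiv a\pmod{x'}$ with $1\leq|a|\leq A/d$ has, for each $a$, at most $(u_i-l_i)d$ solutions $y_i\in[xl_i,xu_i]$, giving $\leq 2(u_i-l_i)A$ pairs. This retains the $(u_i-l_i)$ factor throughout, yielding $\#V(N)\leq c^n 2^{2n}|B|\,2^{N(n+1)}$, after which $c$ depends only on $c_1$.
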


Thus, it follows from Lemma \ref{CountLem} that for $N$ large enough
\begin{align*}
\text{r.h.s. of } \eqref{Ob1} &\geq \left|\bigsqcup_{\frac{\mb{y}}{x}\in G(N)} B\left(\frac{\mb{y}}{x},\rho(2^N)\right)  \right|+O(\rho(2^{N}))\\
&=\# G(N)\times 2^n \rho(2^N)^{n}+O(\rho(2^{N})) \\
&\geq \frac{1}{2} \# T(N)\times 2^{n}\rho(2^N)^{n}+O(\rho(2^{N})) \\
&\geq c^{n}c_{1}2^{n-1} |B|+O(\rho(2^{N})) \geq \frac{1}{2} c^{n}c_{1}2^{n-1} |B|.
\end{align*}
Thus the local ubiquity follows from \eqref{Ob1}.
\begin{proof}[Proof of (\ref{CL1}) in Lemma \ref{CountLem}]
Note that for $\al >0$ and $\ell \in \bN$
\eqlabel{C1eq}{
\begin{split}
&\sum_{\substack{1\leq k\leq \al\ell \\ \gcd(k,\ell)=1}} 1 = \sum_{1\leq k \leq \al\ell}\sum_{d|\gcd(k,\ell)}\mu(d) =\sum_{d|\ell}\mu(d)\sum_{1 \leq k' \leq \al\ell/d} 1\\
&= \sum_{d|\ell} \mu(d)\lfloor\al\ell/d\rfloor = \al \vphi(\ell) + O(\tau(\ell)). 
\end{split}
}
where $\tau(\ell)=\sum_{d|\ell}1$, the number of divisors of $\ell$.
Here and hereafter, $\mu$, $\vphi$, and $\lfloor\cdot\rfloor$ stand for the M\"{o}bius function, Euler function, and floor function, respectively.

Fix small $0<\eps<\frac{3}{\pi^{2}}-\frac{1}{4}$. Note that $\frac{1}{N^2}\sum_{q=1}^{N} \vphi(q) \to \frac{3}{\pi^2}$ as $N\to\infty$ (see \cite[Theorem 330]{HW60}) and $\tau(h)=O(h^\del)$ for any $\del>0$ (see \cite[Theorem 315]{HW60}). Thus, for $N$ large enough and for $\del>0$ small enough,
\begin{align*}
\# T(N) &= \sum_{\substack{2^{N-1}< x\leq 2^N \\ |b\cdot x|_{\bZ}>\eps_{0}}}\sum_{\substack{x\mb{l}\leq\mb{y}\leq x\mb{u} \\ \gcd(x,\mb{y})=1}} 1 \geq \sum_{\substack{2^{N-1}< x\leq 2^N \\ |b\cdot x|_{\bZ}>\eps_{0}}}\sum_{\substack{xl_i \leq y_i \leq xu_i \\ i=2,\dots,n}}\sum_{\substack{xl_1 \leq y_1 \leq xu_1 \\ \gcd(x,y_1)=1}} 1\\
&\geq \sum_{\substack{2^{N-1}< x\leq 2^N \\ |b\cdot x|_{\bZ}>\eps_{0}}}\left( |B|\vphi(x)x^{n-1} +O(x^{n-1}\tau(x))\right) \\
&\geq \sum_{\substack{2^{N-1}< x\leq 2^N \\ |b\cdot x|_{\bZ}>\eps_{0}}}|B|\vphi(x)2^{(N-1)(n-1)} + O(2^{N(n+\del)})\\
&\geq |B|2^{(N-1)(n-1)}\left(\sum_{2^{N-1}< x\leq 2^N}\vphi(x) - \sum_{\substack{2^{N-1}< x\leq 2^N \\ |b\cdot x|_{\bZ}\leq \eps_{0}}}x \right) + O(2^{N(n+\del)})\\
&\geq |B|2^{(N-1)(n-1)}\left(\frac{3}{\pi^2}-\frac{1}{4}-\eps\right)(2^{2N}-2^{2(N-1)}) =c_{1}|B|2^{N(n+1)}.
\end{align*}
The second line is by \eqref{C1eq} and the fifth line is by Lemma \ref{Count}.
\end{proof}
\begin{proof}[Proof of (\ref{CL2}) in Lemma \ref{CountLem}]
Let $B(N):=T(N)\setminus G(N)$. By definition, $\frac{\mb{y}}{x}\in B(N)$ if and only if there is a point $\frac{\mb{s}}{t}\left(\neq\frac{\mb{y}}{x}\right)\in T(N)$ such that 
\eq{
B\left(\frac{\mb{y}}{x}, \rho(2^N)\right)\cap B\left(\frac{\mb{s}}{t},\rho(2^N)\right) \neq \varnothing.
} The coprimeness condition ensures that the centers $\frac{\mb{y}}{x}$ and $\frac{\mb{s}}{t}$ of the balls are distinct. Thus, we have $0 < \left\|\frac{\mb{y}}{x}-\frac{\mb{s}}{t} \right\| \leq 2\rho(2^N)$, or, equivalently,
\eq{
0<\|t\mb{y}-x\mb{s}\| \leq 2xt\rho(2^N).
}
It follows that the associated $4$-tuple $(\mb{y},x,\mb{s},t)$ is an element of the set
\begin{align*}
V(N):=\{(\mb{y},x,\mb{s},t):\ & 0<\|t\mb{y}-x\mb{s}\| \leq 2^{2N+1}\rho(2^N),\ \gcd(x,\mb{y})=\gcd(t,\mb{s})=1,\\
& 2^{N-1}< x,t \leq 2^N,\ x\mb{l}\leq \mb{y} \leq x\mb{u},\ t\mb{l}\leq \mb{s} \leq t\mb{u}
\}
\end{align*}
Hence, $\# B(N)\leq \# V(N)$ and it is enough to show that $\# V(N)< \frac{1}{2}\# T(N)$. 
Observe that if $n=1$, then $V(N)$ is empty by taking $c<\frac{1}{2}$. 
We consider $n=2$ and $n>2$, separately.

\medskip\noindent \textbf{Case $\mathbf{n=2}$.} 
Note that $2^{2N+1}\rho(2^N)=2c2^{N/2}$. If $(\mb{y},x,\mb{s},t)\in V(N)$, then there exist $a_1, a_2$ with $|a_i|\leq 2c2^{N/2}$ and at least one of $a_i$'s being nonzero, such that $ty_i - xs_i = a_i$ for all $i=1,2$.
Let us denote by $V(a_1,a_2,N)$ the set of the above $(\mb{y},x,\mb{s},t)\in V(N)$ for given $a_1,a_2$.

We first consider the case either $a_1=0$ or $a_2=0$. Given $2^{N-1}<x,t\leq 2^N$ and $a$, the number of solutions $(y,s)\in[1,2^N]^2$ of the equation $ty-xs=a$ is less than $2\textrm{gcd}(x,t)$ since the general solution of this equation is of the form $(y_0+p\frac{x}{\textrm{gcd}(x,t)},s_0+p\frac{t}{\textrm{gcd}(x,t)})$ for $p\in\bZ$. It follows that the number of elements $(\mb{y},x,\mb{s},t) \in V(N)$ such that either $a_1=0$ or $a_2=0$ is bounded above by
\eqlabel{ayeq0}{
\begin{split}
&\sum_{1\leq |a_1| \leq 2c2^{\frac{N}{2}}}\# V(a_1,0,N) + \sum_{1\leq |a_2| \leq 2c2^{\frac{N}{2}}}\# V(0,a_2,N)\\
&\leq 4\displaystyle\sum_{1\leq a_1\leq 2c2^{\frac{N}{2}}}
\displaystyle\sum_{(x,t)\in(2^{N-1},2^N]^2}\#\set{(y_1,s_1): ty_1-xs_1=a_1}\#\set{(y_2,s_2): ty_2-xs_2=0}\\
&\leq 4\displaystyle\sum_{1\leq a_1\leq 2c2^{\frac{N}{2}}}
\displaystyle\sum_{\substack{(x,t)\in(2^{N-1},2^N]^2 \\ \textrm{gcd}(x,t)|a_1}}(2\textrm{gcd}(x,t))^2\\
&=16\displaystyle\sum_{1\leq d\leq 2^N}
\displaystyle\sum_{\substack{1\leq a_1\leq 2c2^{\frac{N}{2}}\\ d|a_1}}d^2\#\set{(x,t)\in(2^{N-1},2^N]^2: \textrm{gcd}(x,t)=d}\\
&\ll\displaystyle\sum_{1\leq d\leq 2^N}\frac{2^{\frac{N}{2}}}{d}d^2\left(\frac{2^{N-1}}{d}\right)^2=O(N2^{\frac{5}{2}N}).
\end{split}
}

We now consider the case $a_1\neq0$ and $a_2\neq 0$. 
Note that if $(\mb{y},x,\mb{s},t)\in V(a_1,a_2,N)$, then we have
\eqlabel{ayeq}{
a_1 y_2 - a_2 y_1 = kx
} for some $k\in\bZ$. 
Thus we will count the set of $(a_1,a_2,k,x,y_1,y_2)$ satisfying the equation \eqref{ayeq} where
$2^{N-1}<x\leq 2^N$, $l_i x \leq y_i \leq u_i x$, and $1\leq |a_i|\leq 2c2^{N/2}$ for $i=1,2$. 
Let us denote by $\bar{V}(N)$ the above set.
We will only present the counting for the case $a_1>0$ and $a_2>0$, but the counting estimates remains the same for the cases of the other signs, and the proof also still works similarly.

For fixed $a_1>0$ and $a_2>0$, let us count the set of $(k,x,y_1,y_2)$ such that $(a_1,a_2,k,x,y_1,y_2)\in \bar{V}(N)$. 
It follows from the equation \eqref{ayeq} and $l_i x \leq y_i \leq u_i x$ for $i=1,2$ that 
$$a_1 l_2 - a_2 u_1 \leq k \leq a_1 u_2 - a_2 l_1.$$
Denoting by $d=\gcd(a_1,a_2)$, it follows from the equation \eqref{ayeq} that $d|kx$. Thus we can write
$d=d_1 d_2$, where $d_1|k$ and $d_2|x$, and denote by $a'_i=a_i /d$ for $i=1,2$, $k'=k/d_1$, and $x'=x/d_2$. 
Then we have 
\eqlabel{ayeq2}{
a'_1 y_2 - a'_2 y_1 = k'x'.
} 
If $(\bar{y}_1, \bar{y}_2)$ is a solution of \eqref{ayeq2}, then the general solution of \eqref{ayeq2} is of the form
$(\bar{y}_1 + pa'_1 , \bar{y}_2+ pa'_2)$ with $p\in \bZ$. Hence the number of solution $(y_1,y_2)$ of \eqref{ayeq2}
with $l_i x \leq y_i \leq u_i x$ for $i=1,2$ is at most 
$$\min \left(\left\lceil\frac{(u_1 -l_1)x}{a'_1}\right\rceil,\left\lceil\frac{(u_2-l_2)x}{a'_2}\right\rceil\right) 
\leq 2 \min \left(\frac{(u_1 -l_1)x}{a'_1},\frac{(u_2-l_2)x}{a'_2}\right)$$
since $(u_i -l_i)x/a'_i \geq 1$ with $i=1,2$ for all large enough $N$.
Hence it follows that for any small enough $\delta>0$,
\[
\begin{split}
&\sum_{1\leq a_1, a_2 \leq 2c2^{N/2}} \# \{(k,x,y_1,y_2):(a_1,a_2,k,x,y_1,y_2)\in \bar{V}(N)\}\\
&\leq \sum_{1\leq a_1, a_2 \leq 2c2^{N/2}}
\sum_{d=d_1 d_2} 
\sum_{\substack{d_1|k, d_2|x\\ 2^{N-1} <x \leq 2^N \\a_1 l_2 - a_2 u_1 \leq k \leq a_1 u_2 - a_2 l_1}}
2 \min \left(\frac{(u_1 -l_1)x}{a'_1},\frac{(u_2-l_2)x}{a'_2}\right) \\
& \leq \sum_{1\leq a_1, a_2 \leq 2c2^{N/2}}\sum_{d=d_1 d_2}  
2 \left\lceil \frac{a_2(u_1-l_1)+a_1(u_2-l_2)}{d_1}\right\rceil \left\lceil \frac{2^N}{d_2} \right\rceil 
\min \left(\frac{(u_1 -l_1)x}{a'_1},\frac{(u_2-l_2)x}{a'_2}\right)\\
& \leq \sum_{1\leq a_1, a_2 \leq 2c2^{N/2}}\sum_{d=d_1 d_2}  
4 \left(\frac{a_2(u_1-l_1)+a_1(u_2-l_2)}{d_1}+1\right) \frac{2^N}{d_2}\min \left(\frac{(u_1 -l_1)2^N}{a'_1},\frac{(u_2-l_2)2^N}{a'_2}\right)\\
& \leq \sum_{1\leq a_1, a_2 \leq 2c2^{N/2}}\sum_{d=d_1 d_2} 
2^{2N+2}\left((a_2(u_1-l_1)+a_1(u_2-l_2)) \min \left(\frac{u_1 -l_1}{a_1},\frac{u_2-l_2}{a_2}\right)+\frac{(u_1 -l_1)d_1}{a_1}\right) \\ 
& \leq \sum_{1\leq a_1, a_2 \leq 2c2^{N/2}}\tau(d)\left(2^{2N+3}(u_1-l_1)(u_2-l_2)
+ d 2^{2N+2}\frac{(u_1 -l_1)}{a_1}\right)\\
& \ll \sum_{1\leq d\leq 2c2^{N/2}}\sum_{1\leq a'_1, a'_2 \leq \frac{2c2^{N/2}}{d}} 
d^{\delta}\left(2^{2N+3}|B| + 2^{2N+2}\frac{(u_1 -l_1)}{a'_1}\right)\\
& \ll \sum_{1\leq d\leq 2c2^{N/2}} \left(\frac{1}{d^{2-\delta}}c^2 2^{3N}|B|
 + \frac{1}{d^{1-\delta}}N2^{2N+\frac{N}{2}}\right)\\
& \ll c^2 |B| 2^{3N} + O(N2^{2N+\frac{(1+\delta)N}{2}}).
\end{split}
\]
Combining with the cases of other signs, we have
\eqlabel{ayeq3}{
\#\bar{V}(N)\ll c^2|B|2^{3N}+O(N2^{2N+\frac{(1+\delta)N}{2}}).
}

We next claim that $\sum_{1\leq |a_1|,|a_2|\leq 2c2^{N/2}}\#  V(a_1,a_2,N)\leq 2\# \bar{V}(N)$ 
by showing that for fixed $(a_1,a_2,k,x,y_1,y_2)\in\bar{V}(N)$, there are at most two pairs of $(s_1,s_2,t)$ such that $(\by,x,\bs,t)\in V(N)$. To see this, observe that $ty_i \equiv a_i\ (\mathrm{mod}\ x)$ for $i=1,2$ and $\gcd(\by,x)=1$. Since $\gcd(\by,x)=1$, there exist $\alpha_1,\alpha_2\in\bZ$ such that $\alpha_1 y_1+\alpha_2 y_2\equiv 1 (\mathrm{mod}\ x)$. It follows that $t\equiv a_1\alpha_1+a_2\alpha_2$ is uniquely determined modulo $x$ for fixed $a_i,y_i$ and $x$. Since $t<2x$, the number of possible $t$ is at most two. Once $t$ is determined, then $s_1$ and $s_2$ are also determined uniquely, thus the claim follows.

Hence, combining \eqref{ayeq0}, \eqref{ayeq3}, and the above claim, we have 
$$\# V(N) \ll c^2|B|2^{3N}+O(N2^{2N+\frac{(1+\delta)N}{2}}+N2^{\frac{5}{2}N}).$$
By taking $\delta<1$, for all large enough $N$, $ \# V(N) \leq Cc^2 |B|2^{3N}$ for some absolute constant $C>0$.  
It follows that $\# V(N)<\frac{1}{2}\# T(N)$ for sufficiently large $N$ by choosing
$c<(\frac{c_1}{2C})^{1/2}$.

\medskip\noindent \textbf{Case $\mathbf{n>2}$.} For fixed $2^{N-1}<x,t\leq 2^N$, we denote by 
$$ d=\gcd(x,t),\ x'=\frac{x}{d},\ t'=\frac{t}{d},\ A=2^{2N+1}\rho(2^N),\text{ and } A'=\frac{A}{d}.$$

We will count the following set: for $0\leq a\leq A'$,
$$V_{x,t}(a):=\{(y,s): t'y-x's = a,\ x\ell \leq y \leq xu,\ t\ell \leq s \leq tu\}.$$
\textbf{Claim 1}. $\# V_{x,t}(0) \leq \max(\lceil d(u-\ell) \rceil, 1)$.
\begin{proof}
Since $(t',x')=1$, $x'|y$ holds. Thus
$$\# \{y: x'|y,\ x\ell \leq y\leq xu\} \leq \max\left(\left\lceil\frac{x(u-\ell)}{x'}\right\rceil,1\right)=\max(\lceil d(u-\ell) \rceil, 1),$$
which concludes the claim since $s$ is uniquely determined by $y$.
\end{proof}
\noindent

Now assume that $a\neq 0$ and $A' \geq 1$.
Let $y_0=y_0 (x,t)$ and $s_0=s_0 (x,t)$ be the integers with the smallest absolute value such that 
\[
t'y_0 \equiv a_0\ (\mathrm{mod}\ x')\quad \text{and}\quad  x's_0 \equiv -b_0\ (\mathrm{mod}\ t'),
\] for some $0<a_0=a_0 (x,t)\leq A'$ and $0<b_0=b_0 (x,t)\leq A'$. We remark that such $y_0$ and $s_0$ are unique since $a_0\neq 0$.\vspace{0.3cm}\\
\textbf{Claim 2}. $a_0 = b_0$ and $t'y_0 - x's_0 = a_0$.
\begin{proof}
Let $y,s$ be such that $t'y_0 -x's=a_0$ and $t'y-x's_0=b_0$. Then
\[
\begin{split}
|s|=\frac{|t'y_0-a_0|}{x'}\leq \frac{a_0 + t'|y_0|}{x'}\leq \frac{a_0+t'|y|}{x'}&=\frac{a_0 +|b_0 +x's_0|}{x'}\\
&\leq \frac{a_0+b_0}{x'}+|s_0|.
\end{split}
\]
Since $n> 2$, for all large enough $N$,
$$\frac{a_0+b_0}{x'}\leq \frac{2A'}{x'}=\frac{2A}{x}\leq \frac{2c2^{(1-\frac{1}{n})N}}{2^{N-1}}<1.$$ 
Hence we have $|s|=|s_0|$, and similarly $|y|=|y_0|$. If $s=-s_0$, then on one hand, $x's\equiv b_0 (\mathrm{mod}\ t')$; on the other hand, since $t'y_0 -x's=a_0$, we have $x's\equiv -a_0\equiv x'-a_0 (\mathrm{mod}\ t')$. It cannot happen that $b_0=x'-a_0$ since $x'>2A'\ge a_0+b_0$. Hence we get $s=s_0$, and similarly $y=y_0$. It concludes the claim.
\end{proof}
\noindent
\textbf{Claim 3}. $1\leq |y_0|, |s_0| \leq \left\lceil \frac{2x'}{A'} \right\rceil = \left\lceil \frac{2x}{A} \right\rceil \leq 2^{\frac{N}{n}+1}$.
\begin{proof}
Consider the set $P=\left\{t',2t',\dots,\left\lceil \frac{2x'}{A'} \right\rceil t'\right\}$ modulo $x'$. Partition $[1,x']\cap\bN$ into $\lfloor A' \rfloor$ consecutive integers. Then the number of the partitions is at most $\left\lceil \frac{x'}{\lfloor A' \rfloor} \right\rceil$. 
It follows from $A'\geq 1$ that $2\lfloor A' \rfloor \geq A'$, hence,
$\left\lceil \frac{x'}{\lfloor A' \rfloor} \right\rceil \leq \left\lceil \frac{2x'}{A'} \right\rceil$.
By the pigeonhole principle, there are at least two elements of $P$ in the same partition, say $it'$ and $jt'$ with $i\neq j$. Then
$(i-j)t' \ (\mathrm{mod}\ x')$ is contained in $[1,\lfloor A' \rfloor]$ or $[-\lfloor A' \rfloor,-1]$. The fact that $|i-j|\leq \left\lceil \frac{2x'}{A'} \right\rceil$ and the minimality of $y_0$ imply the claim for $y_0$. Similarly, we can conclude the claim for $s_0$.
\end{proof}
\noindent
\textbf{Claim 4}. If $|y_0|\leq (u-\ell)a_0$ or $|s_0|\leq (u-\ell)a_0$, then 
\[
\sum_{a=1}^{A'}\#V_{x,t}(a) \leq 10A(u-\ell).
\]
\begin{proof}
It suffices to show the case $|y_0|\leq (u-l)a_0$. Let $$\cA_k:=\set{xl\leq y\leq xu: y\equiv k(\mathrm{mod}\ y_0)}=\set{z_k,z_k+|y_0|,\cdots,z_k+\alpha_k |y_0|}.$$
for $0\leq k\leq |y_0|-1$. Then $z_k\in\bN$ is the element such that $xl\leq z_k<xl+|y_0|$ and $z_k\equiv k(\mathrm{mod}\ y_0)$, and $\alpha_k\in\bN$ satisfies $\alpha_k\leq\frac{x(u-l)}{|y_0|}$. Partition $\cA_k$ into $M=\lfloor\frac{x'}{a_0}\rfloor$ consecutive integers. Recall that $\frac{x'}{a_0}\ge\frac{x'}{A'}\ge 1$ holds. Then the number of the partitions is at most $\lceil\frac{\alpha_k+1}{M}\rceil\leq\frac{\frac{x(u-l)}{|y_0|}+1}{\frac{x'}{2a_0}}+1$.

Let $P=\set{z_k',z_k'+|y_0|,\cdots,z_k'+(M-1)|y_0|}$ be a partition of $M$ consecutive integers, where $z_k'\in\cA_k$. To count the number of $y\in P$ such that $t'y\equiv a$ $(\mathrm{mod}\ x')$ with $1\leq a\leq A'$, we see the set $t'P$ modulo $x'$.  Write $t'z_k'\equiv w (\mathrm{mod}\ x')$ for some $0\leq w< x'$. Then the elements of $t'P$ can be written $\set{w,w+a_0,\cdots, w+(M-1)a_0}$ or $\set{w-a_0,\cdots, w-(M-1)a_0}$ $(\mathrm{mod}\ x')$ depending on the sign of $y_0$. Since $Ma_0=\lfloor\frac{x'}{a_0}\rfloor a_0\leq x'$, there are at most $\lceil\frac{A'}{a_0}\rceil$ elements in $t'P$ which are congruent to $a$ modulo $x'$ for some $1\leq a\leq A'$.

To sum up, for each $\cA_k$, there are at most
\eq{\begin{aligned}
\lceil\frac{A'}{a_0}\rceil\cdot\lceil\frac{\alpha_k+1}{M}\rceil
&\leq\frac{2A'}{a_0}\left(\frac{2a_0x(u-l)+2a_0|y_0|}{x'|y_0|}+1\right)\\
&=\frac{4A'd(u-l)}{|y_0|}+\frac{4A'}{x'}+\frac{2A'}{a_0}
\end{aligned}}
number of $y$ such that $y\in\cA_k$ and $t'y\equiv a (\mathrm{mod}\ x')$ for some $1\leq a\leq A'$. Since there are $|y_0|$ number of $\cA_k$'s and $s$ is uniquely determined by $y$, we have
\eq{\begin{aligned}
\displaystyle\sum_{a=1}^{A'}\#V_{x,t}(a)&\leq |y_0|\left(\frac{4A'd(u-l)}{|y_0|}+\frac{4A'}{x'}+\frac{2A'}{a_0}\right)\\
&=4A(u-l)+\frac{4A|y_0|}{x}+\frac{2A'|y_0|}{a_0}\\
&\leq 4A(u-l)+4A(u-l)\frac{a_0}{x}+2A'(u-l)\leq 10A(u-l).
\end{aligned}}
Here we used the assumption $|y_0|\leq(u-l)a_0$ in the last line.
\end{proof}
We remark that under the assumption of \textbf{Claim 4}, the counting of $V_{x,t}(a)$'s is good enough for our purpose. Thus we will count the set of $x,t$'s such that $y_0,s_0,a_0$ may not satisfy the assumption of \textbf{Claim 4}.

Note that $\gcd(y_0,s_0)=1$, otherwise it contradicts to the minimality of $y_0,s_0$. Through \textbf{Claim 3}, we consider the following sets and the map: 
\[
\begin{split}
S_{\text{good}}=\{(y_0,s_0,a_0):\ & |y_0|\leq 2^{\frac{N}{n}+1},\ |s_0|\leq 2^{\frac{N}{n}+1},\ \gcd(y_0,s_0)=1,\\
& (u-\ell)^{-1}\min(|y_0|,|s_0|)\leq a_0 \leq A' 
\},\\
S_{\text{bad}}=\{(y_0,s_0,a_0):\ & |y_0|\leq 2^{\frac{N}{n}+1},\ |s_0|\leq 2^{\frac{N}{n}+1},\ \gcd(y_0,s_0)=1,\\
& 1\leq a_0 < (u-\ell)^{-1}\min(|y_0|,|s_0|) 
\},\\
\pi : (2^{N-1},2^N]^2 \ni (x,t) &\mapsto (y_0(x,t),s_0(x,t),a_0(x,t)) \in S_{\text{good}}\cup S_{\text{bad}}.
\end{split}
\]

Let us first count the set $\pi^{-1}(S_{\text{bad}})$.
For $(y_0,s_0,a_0)\in S_{\text{bad}}$, assume that there exists $t_0', x_0'$ such that $t_0'y_0-x_0's_0=a_0$. Since $\gcd(y_0,s_0)=1$, all solutions of $t'y_0-x's_0=a_0$ can be represented in the form 
\[
(t',x')=(t_0'+ks_0,x_0'+ky_0), \quad k\in\bZ.
\]
Thus, for each $d\geq 1$,
\[
\#\left\{(x,t)\in (2^{N-1},2^N]^2  : y_0(x,t)=y_0,\ s_0(x,t)=s_0,\ \gcd(x,t)=d\right\}\leq \min\left(\frac{2^N}{d|s_0|},\frac{2^N}{d|y_0|}\right).
\]
Summing over $1\leq d \leq 2^N$, we have
\[
\begin{split}
\#&\left\{(x,t)\in (2^{N-1},2^N]^2  : y_0(x,t)=y_0,\ s_0(x,t)=s_0\right\}\\
&\leq \sum_{1\leq d\leq 2^N}\frac{1}{d}\min\left(\frac{2^N}{|s_0|},\frac{2^N}{|y_0|}\right)
\ll N\min\left(\frac{2^N}{|s_0|},\frac{2^N}{|y_0|}\right).
\end{split}
\]
Since $n> 2$, it follows that for all small enough $\de >0$, 
\eqlabel{countbad}{
\begin{split}
\#\pi^{-1}(S_{\text{bad}})
%\# \left\{ (x,t)\in (2^{N-1},2^N]^2 : (y_0(x,t),s_0(x,t),a_0(x,t))\in S_{\text{bad}}\right\}
&\ll \sum_{\substack{|y_0|,|s_0|\leq 2^{\frac{N}{n}+1} \\ 1\leq a_0<(u-\ell)^{-1}\min(|y_0|,|s_0|)}} N\min\left(\frac{2^N}{|s_0|},\frac{2^N}{|y_0|}\right)\\
&\leq \sum_{|y_0|,|s_0|\leq 2^{\frac{N}{n}+1}} N(u-\ell)^{-1}\min\left(\frac{2^N}{|s_0|},\frac{2^N}{|y_0|}\right) \min(|y_0|,|s_0|)\\
&\leq (u-\ell)^{-1}\sum_{|y_0|,|s_0|\leq 2^{\frac{N}{n}+1}} N 2^N \min\left(\frac{|y_0|}{|s_0|},\frac{|s_0|}{|y_0|}\right)\\
&\ll (u-\ell)^{-1}\frac{N^2}{n}2^{N+\frac{2N}{n}} \ll 2^{(2-\de)N}.
\end{split}
}

Now, for each $1\leq i\leq n$ and $0\leq a\leq A'$, let us denote by 
\[
V_{x,t}^i (a):=\{(y_i,s_i): t'y_i-x's_i = a,\ x\ell_i \leq y_i \leq xu_i,\ t\ell_i \leq s_i \leq tu_i\}.
\]
%Using above observations in each $i$, for $1\leq j_1<\cdots<j_k\leq n$, we have
%\begin{align*}
%\sum_{(x,t)\in (2^{N-1},2^N]^2} \prod_{i=j_1,\dots,j_k} \sum_{a=1}^{A}V_{x,t}^i (a)
%&\ll \left(\# \pi^{-1}(S^{c})\right)\prod_{i=j_1,\dots,j_k} (u_{i}-l_{i})A  + \left(\# \pi^{-1}(S)\right)A^n\\
%&\ll 2^{2N}A^k \prod_{i=j_1,\dots,j_k} (u_{i}-l_{i}). 
%\end{align*}
Then we have
$$\# V(N) \leq \displaystyle\sum_{(x,t)\in (2^{N-1},2^N]^2} \prod_{i=1}^n \sum_{a=0}^{A'} \#V_{x,t}^i (a).$$

For $(x,t)\in\pi^{-1}(S_{\text{good}})$ and sufficiently large $N$, 
\eqlabel{countgood}{
\sum_{a=1}^{A'} \#V_{x,t}^i (a) + \#V_{x,t}^i (0)\leq 10A(u_i-l_i)+\max(A(u_i-l_i),1)\leq 11A(u_i-l_i).
} 
We applied \textbf{Claim 4} for the first term, and \textbf{Claim 1} for the second term. 

For each $0\leq a\leq A'$, the number of solutions $(y,s)$ of
$$t'y-x's = a,\ 1 \leq y \leq x,\ 1 \leq s \leq t $$
is at most $d$, hence $\#V_{x,t}^i (a)\leq d$. For $(x,t)\in\pi^{-1}(S_{\text{bad}})$, it follows that 
\eqlabel{countbad2}{
\sum_{a=0}^{A'} \#V_{x,t}^i (a)\leq (A'+1)d\leq 2A.
}
Therefore, combining \eqref{countbad}, \eqref{countgood}, and \eqref{countbad2}, we have
\[
\begin{split}
\# V(N)&\ll \left(\# \pi^{-1}(S_{\text{good}})\right) \prod_{i=1}^n A(u_i-\ell_i ) + \left(\# \pi^{-1}(S_{\text{bad}})\right) (2A)^n\\
&\ll 2^{2N}A^n \prod_{i=1}^{n} (u_i -\ell_i) + O(2^{N(n+1-\delta)})\ll c^{n}|B|2^{N(n+1)},
\end{split}
\]
hence $\# V(N)\leq Cc^n|B|2^{N(n+1)}$ for sufficiently large $N$ and some absolute constant $C>0$. It follows that $\# V(N)<\frac{1}{2}\# T(N)$ for sufficiently large $N$ by choosing $c<(\frac{c_1}{2C})^{1/n}$.
\end{proof}
This proves Theorem \ref{LUb} for $m=1$.
\end{proof}

\begin{proof}[Proof of Theorem \ref{LUb} for $m\geq 2$]
Note that it suffices to show that
\eqlabel{WTS}{
|\Del(\rho,N)|\to 1 \quad\text{as}\quad N\to\infty
} for the local unbiquity. Instead of the strategy for $m=1$, we will use mean and variance techniques in \cite{DV97} using the auxiliary function $\om$ in \eqref{UF}.

Without loss of generality we may assume that $\eps_{0}=\eps(\mb{b})=\frac{|b_{1}|_{\bZ}}{4}$.    
Let  $I(N)$ denote the set of vectors $\mb{x}=(x_{1},\dots,x_{m})\in \bZ^{m}$ such that
\begin{enumerate}
\item\label{As1} $N \leq x_1 \leq 2N$,
\item\label{As2} For $i=2,\dots,m$,
\eq{
1\leq x_{i}\leq \frac{N}{\om(2N)^{\frac{1}{2(m-1)}}},
}
\item\label{As3} $\gcd(\mb{x})=1$,
\item\label{As4} $|\mb{b}\cdot\mb{x}|_{\bZ}>\eps_{0}$.
\end{enumerate}
Denote by $J(N):=\{(\mb{x},\mb{y})\in I(N)\times \bZ^{n}: \|\mb{y}\|\leq m\|\mb{x}\|\}$. Then $J(N)\subset J$.

Let $\chi_{\Del_{(\mb{x},\mb{y})}}$ be the characteristic function 
\eq{
\chi_{\Del_{(\mb{x},\mb{y})}}(A):=
\begin{dcases}
    1 & \quad \text{if } A\in \Del_{(\mb{x},\mb{y})},\\
    0 & \quad \text{otherwise},
\end{dcases}
}
where 
\eq{
\Del_{(\mb{x},\mb{y})} := \Del(R_{(\mb{x},\mb{y})}, N^{-\frac{m}{n}}\om(2N)\|\mb{x}\|^{-1}).
}
Also, for a matrix $A \in [0,1]^{mn}$, define
\eq{
\nu_{N}(A):= \sum_{(\mb{x},\mb{y})\in J(N)} \chi_{\Del_{(\mb{x},\mb{y})}}(^{t}A).
}
Thus $\nu_{N}(A)$ is the number of resonant sets $R_{(\mb{x},\mb{y})}$ for $(\mb{x},\mb{y})\in J(N)$ which are `close' to $^{t}A$, i.e. such that $\|{^{t}A}\mb{x}-\mb{y}\|<\de(N)$, where $\de(N):= N^{-\frac{m}{n}}\om(2N)$. Denote by $\mu_{N}$ and $\sigma_{N}^{2}$ the mean and variance respectively, that is
\eq{
\mu_{N}:= \int_{[0,1]^{mn}}\nu_{N}(A)dA \quad\text{and}\quad \sigma_{N}^{2}:= \int_{[0,1]^{mn}} \nu_{N}^{2}(A)dA - \mu_{N}^{2}.
}

Since $\|\mb{x}\|^{-1}\leq N^{-1}$ for any $\mb{x}\in I(N)$, we have 
\eq{
\Del(R_{(\mb{x},\mb{y})}, N^{-\frac{m}{n}}\om(2N)\|\mb{x}\|^{-1}) \subset \Del(R_{(\mb{x},\mb{y})}, \rho(2N)) 
}
by taking $c<2^{\frac{m+n}{n}}$. Thus, we claim that 
\eq{
|Z_{N}| \to 0 \quad\text{as}\quad N\to\infty,
}
where $Z_{N}:= \nu_{N}^{-1}(0)=\{A\in[0,1]^{mn}: \nu_{N}(A)=0\}$, which implies \eqref{WTS} by replacing $N$ with $2^{N-1}$.

\begin{lem}\label{MVlem}
For $N$ large enough, $\sigma_{N}^{2}\leq \mu_{N}$ and $\mu_{N}\geq c_{0}\om(2N)^{\frac{1}{2}}$ for some positive constant $c_{0}$ independent of $N$.
\end{lem}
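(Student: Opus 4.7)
The plan is to estimate the mean $\mu_N$ via equidistribution and bound the variance $\sigma_N^2$ via a case-analysis based on the linear relation between $\mb{x}$ and $\mb{x}'$, obtaining the two claims separately.

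For the mean lower bound, I would use the sandwich
\[
\{A\in[0,1]^{mn} : \|{^{t}A}\mb{x}-\mb{y}\|_\infty < \de(N)\} \subseteq \Delta_{(\mb{x},\mb{y})} \subseteq \{A : \|{^{t}A}\mb{x}-\mb{y}\|_\infty < m\de(N)\}
\]
together with Fubini across the columns $\mb{a}_j$ of $A$. For each fixed $\mb{x}\in I(N)$ this yields $\sum_{\mb{y} : (\mb{x},\mb{y})\in J(N)} |\Delta_{(\mb{x},\mb{y})}| \asymp (2\de(N))^n$, using the uniform distribution of $\mb{a}_j\cdot\mb{x}\bmod 1$ for nonzero $\mb{x}\in\bZ^m$ and the fact that the constraint $\|\mb{y}\|\leq m\|\mb{x}\|$ is automatic up to boundary effects (since $\|{^{t}A}\mb{x}\|_\infty \leq m\|\mb{x}\|_\infty$ for $A\in[0,1]^{mn}$). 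On the other hand, $\#I(N)$ equals the number of primitive integer vectors in the box of volume $\asymp N^m/\om(2N)^{1/2}$ that also satisfy $|\mb{b}\cdot\mb{x}|_\bZ>\eps_0$. A standard gcd sieve (giving density $1/\zeta(m)$) together with Lemma~\ref{Count} (which ensures at least half of the vectors pass the condition on $\mb{b}\cdot\mb{x}$) gives $\#I(N)\asymp N^m/\om(2N)^{1/2}$. Since $\de(N)^n = N^{-m}\om(2N)^n$, this yields $\mu_N\asymp\om(2N)^{n-1/2}\geq c_0\,\om(2N)^{1/2}$ because $n\geq 1$.

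For the variance, I would write
\[
\sigma_N^2 = \sum_\alpha\bigl(|\Delta_\alpha|-|\Delta_\alpha|^2\bigr) + \sum_{\alpha\neq\alpha'}\bigl(|\Delta_\alpha\cap\Delta_{\alpha'}|-|\Delta_\alpha||\Delta_{\alpha'}|\bigr).
\]
The diagonal is at most $\mu_N$. For the off-diagonal I split on the linear relation between $\mb{x}$ and $\mb{x}'$. In the linearly dependent case, primitivity forces $\mb{x}'=\pm\mb{x}$, so $\mb{y}\neq\mb{y}'$ and $\Delta_\alpha\cap\Delta_{\alpha'}=\varnothing$ for $N$ large (because $\de(N)\to 0$), giving a nonpositive contribution. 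In the linearly independent case, the group homomorphism $L_{\mb{x},\mb{x}'}: \bT^m\to\bT^2$, $\mb{a}\mapsto(\mb{a}\cdot\mb{x},\mb{a}\cdot\mb{x}')\bmod\bZ^2$, is surjective, so it pushes the Haar measure on $\bT^m$ to the Haar measure on $\bT^2$. Consequently $|\{\mb{a}\in[0,1]^m : \|\mb{a}\cdot\mb{x}\|_\bZ<\de(N),\ \|\mb{a}\cdot\mb{x}'\|_\bZ<\de(N)\}| = (2\de(N))^2$, and column-wise independence gives $\sum_{\mb{y},\mb{y}'}|\Delta_\alpha\cap\Delta_{\alpha'}| = (2\de(N))^{2n}$, which matches $(\sum_\mb{y}|\Delta_\alpha|)(\sum_{\mb{y}'}|\Delta_{\alpha'}|)$. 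Hence the linearly independent pairs contribute zero to leading order, and $\sigma_N^2\leq\mu_N$.

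The main obstacle is the careful bookkeeping needed to turn the above asymptotic equalities into the sharp inequality $\sigma_N^2\leq\mu_N$: the factor-$m$ sandwich, the restriction of $\mb{y}$ to $\|\mb{y}\|\leq m\|\mb{x}\|$, and the lower-order terms in the gcd sieve all produce error terms whose aggregate must not exceed $\mu_N$. These errors are controlled by exploiting the strict negativity of the linearly dependent contribution and showing that each source of error contributes only a relative $O(N^{-1})$ correction, so that the desired inequality holds for $N$ sufficiently large.
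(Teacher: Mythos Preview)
Your approach is essentially the paper's: compute $\mu_N$ via equidistribution of $A\mapsto{^{t}A}\mb{x}\bmod\bZ^n$ for primitive $\mb{x}$, lower-bound $\#I(N)$ by a M\"obius sieve combined with Lemma~\ref{Count}, and control the variance using the joint equidistribution of $({^{t}A}\mb{x},{^{t}A}\mb{x}')\bmod\bZ^{2n}$ for linearly independent $\mb{x},\mb{x}'$. The paper packages these two equidistribution facts as Lemmas~8 and~9 of Sprind\v{z}uk~\cite{Spr79}.

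Two simplifications dissolve the ``main obstacle'' you flag. First, the paper works directly with the condition $\|{^{t}A}\mb{x}-\mb{y}\|<\de(N)$ (this is how it reads $A\in\Del_{(\mb{x},\mb{y})}$), so no factor-$m$ sandwich appears: Sprind\v{z}uk's lemmas give the \emph{exact} identities
\[
\sum_{\mb{y}}|\Del_{(\mb{x},\mb{y})}|=(2\de(N))^n
\quad\text{and}\quad
\sum_{\mb{y},\mb{y}'}|\Del_{(\mb{x},\mb{y})}\cap\Del_{(\mb{x}',\mb{y}')}|=(2\de(N))^{2n}
\]
for primitive $\mb{x}$ and for linearly independent $\mb{x},\mb{x}'$ respectively, with no error terms. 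Second, your case split on linear dependence is superfluous: vectors in $I(N)$ are primitive with all coordinates strictly positive, so distinct $\mb{x},\mb{x}'\in I(N)$ are automatically linearly independent (the only primitive scalar multiples of $\mb{x}$ are $\pm\mb{x}$, and $-\mb{x}\notin I(N)$). With these exact identities one obtains
\[
\int_{[0,1]^{mn}}\nu_N^2\,dA=\mu_N+\bigl(\#I(N)\bigr)\bigl(\#I(N)-1\bigr)(2\de(N))^{2n}\le\mu_N+\mu_N^2,
\]
hence $\sigma_N^2\le\mu_N$ sharply, with no residual bookkeeping. Your asymptotic version (with the sandwich) would only yield $\sigma_N^2\ll\mu_N$, which still suffices for the application $|Z_N|\le\sigma_N^2/\mu_N^2\to 0$ but not for the lemma as stated.
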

\begin{proof}
Suppose that $N$ is large enough so that $\de(N)=N^{-\frac{m}{n}}\om(2N) < \frac{1}{2}$. 
By Lemma 8 in \cite{Spr79}, for $\mb{x} \in I(N)$,
\eq{
\begin{split}
\sum_{\mb{y}:(\mb{x},\mb{y})\in J(N)}\int_{[0,1]^{mn}}\chi_{\Del_{(\mb{x},\mb{y})}}(A)dA &= |\{A\in [0,1]^{mn}: \|{^{t}A}\mb{x}\|_{\bZ}< \de(N)\}|\\
=(2\de(N))^{n}= 2^{n}N^{-m}\om(2N)^{n}.
\end{split}
}
Hence 
\eq{
\mu_{N}=\sum_{(\mb{x},\mb{y})\in J(N)} \int_{[0,1]^{mn}}\chi_{\Del_{(\mb{x},\mb{y})}}(A)dA = \sum_{\mb{x}\in I(N)} 2^{n}N^{-m}\om(2N)^{n}.
}

Let $\cS(i)$ denote the set of vectors $\mb{x}\in\bZ^{m}$ satisfying the condition $(i)$ in the definition $I(N)$ for each $i=1,2,3,4$.
Note that
\eq{
\sum_{\mb{x}\in I(N)}1 \geq \sum_{\mb{x}\in \cS(\ref{As1}) \cap \cS(\ref{As2}) \cap \cS(\ref{As3}) }1 - \sum_{\mb{x}\in \cS(\ref{As1})\cap \cS(\ref{As2}) \cap \cS(\ref{As4})^{c} }1. 
}
Following \cite[p.40]{Spr79},
\eq{
\begin{split}
&\sum_{\mb{x}\in \cS(\ref{As1}) \cap \cS(\ref{As2}) \cap \cS(\ref{As3}) }1 = \sum_{\mb{x}\in \cS(\ref{As1}) \cap \cS(\ref{As2})}\sum_{d|\gcd(\mb{x})}\mu(d)\\
&=\sum_{N\leq x_{1}\leq 2N} \sum_{d|x_{1}}\mu(d) \prod_{i=2}^{m}\left|\left\{x_i \in \bZ : d|x_i,\ 1\leq x_{i} \leq \frac{N}{\om(2N)^{\frac{1}{2(m-1)}}}  \right\} \right| \\
&=\sum_{N\leq x_{1}\leq 2N} \sum_{d|x_{1}}\mu(d) \left\lfloor \frac{N}{d\om(2N)^{\frac{1}{2(m-1)}}}\right\rfloor^{m-1} \\
&=\sum_{N\leq x_{1}\leq 2N}\left( \frac{N^{m-1}}{\om(2N)^{\frac{1}{2}}}\sum_{d|x_{1}}\frac{\mu(d)}{d^{m-1}} + O\left(N^{m-2}\sum_{d|x_{1}}\frac{|\mu(d)|}{d^{m-2}}\right) \right)\\
&=\begin{dcases}
    \sum_{N\leq x_{1}\leq 2N}\frac{N}{\om(2N)^{\frac{1}{2}}}\frac{\vphi(x_{1})}{x_{1}}+O(\tau(x_{1}))       & \quad \text{if } m=2,\\
    \sum_{N\leq x_{1}\leq 2N}\frac{N^{m-1}}{\om(2N)^{\frac{1}{2}}}\prod_{\substack{p|x_1 \\ p \text{ prime}}}\left(1-\frac{1}{p^{m-1}}\right)+O(N^{m-2}\tau(x_{1}))   & \quad \text{if } m\geq 3.
\end{dcases}
\end{split}
}

Fix small $0<\eps<\frac{6}{\pi^{2}}-\frac{1}{2}$. Note that $\frac{1}{N}\sum_{q=1}^{N} \frac{\vphi(q)}{q}\to \frac{6}{\pi^2}$ as $N\to\infty$ (see \cite[Lemma 2.4]{H98}) and $\tau(h)=O(h^\del)$ for any $\del>0$ (see \cite[Theorem 315]{HW60}). In the case $m=2$, we have  
\eq{
\sum_{\mb{x}\in \cS(\ref{As1}) \cap \cS(\ref{As2}) \cap \cS(\ref{As3}) }1 \geq \left(\frac{6}{\pi^{2}}-\eps\right)\frac{N^{2}}{\om(2N)^{\frac{1}{2}}}
} for all large enough $N$.
If $m\geq 3$, then
\eq{
\prod_{\substack{p|x_1 \\ p \text{ prime}}}\left(1-\frac{1}{p^{m-1}}\right) > \prod_{p \text{ prime}}\left(1-\frac{1}{p^{2}}\right)=\frac{6}{\pi^2},
} hence we have that for all large enough $N$,
\eq{
\sum_{\mb{x}\in \cS(\ref{As1}) \cap \cS(\ref{As2}) \cap \cS(\ref{As3}) }1 \geq \left(\frac{6}{\pi^{2}}-\eps\right)\frac{N^{m}}{\om(2N)^{\frac{1}{2}}}.
}

On the other hand, it follows from Lemma \ref{Count} that
\eq{
\sum_{\mb{x}\in \cS(\ref{As1})\cap \cS(\ref{As2}) \cap \cS(\ref{As4})^{c} }1\leq \frac{1}{2}\frac{N^{m}}{\om(2N)^{\frac{1}{2}}}.
}
Taking $c_{0}= 2^{n}\left(\frac{6}{\pi^{2}}-\frac{1}{2}-\eps \right)>0$, it follows that 
\eq{
\mu_{N}=  2^{n}N^{-m}\om(2N)^{n}\sum_{\mb{x}\in I(N)} 1 \geq c_{0} \om(2N)^{\frac{1}{2}}.
}

To prove that $\sigma_{N}^{2}\leq \mu_{N}$, we note that, for $\mb{x}\neq\mb{x}' \in I(N)$, 
\eq{
\begin{split}
&\sum_{\mb{y}:(\mb{x},\mb{y})\in J(N)}\sum_{\mb{y}':(\mb{x}',\mb{y}')\in J(N)} \int_{[0,1]^{mn}} \chi_{\Del_{(\mb{x},\mb{y})}}(A)\chi_{\Del_{(\mb{x}',\mb{y}')}}(A)dA\\
&=  |\{A\in [0,1]^{mn}: \|{^{t}A}\mb{x}\|_{\bZ}< \de(N)\}|\times|\{A\in [0,1]^{mn}: \|{^{t}A}\mb{x}'\|_{\bZ}< \de(N)\}|\\
&= 2^{2n}N^{-2m}\om(2N)^{2n}.
\end{split}
} by Lemma 9 in \cite{Spr79}. Thus we have
\eq{
\begin{split}
& \int_{[0,1]^{mn}}\nu_{N}^{2}(A)dA \\
&= \sum_{\mb{x}\in I(N)}\sum_{\mb{x}'\in I(N)}\sum_{\mb{y}:(\mb{x},\mb{y})\in J(N)}\sum_{\mb{y}':(\mb{x}',\mb{y}')\in J(N)} \int_{[0,1]^{mn}} \chi_{\Del_{(\mb{x},\mb{y})}}(A)\chi_{\Del_{(\mb{x}',\mb{y}')}}(A)dA\\
&= \mu_{N} + 2^{2n}N^{-2m}\om(2N)^{2n} \sum_{\mb{x}\neq\mb{x}' \in I(N)} 1 \leq \mu_{N} + \mu_{N}^{2}.
\end{split}
}
By definition of $\sigma_{N}^{2}$, we have 
\eq{
\sigma_{N}^{2}\leq \mu_{N}.
}
\end{proof}

Note that
\eq{
\sigma_{N}^{2}=\int_{[0,1]^{mn}}\left(\nu_{N}(A)-\mu_{N}\right)^{2}dA \geq \int_{Z_{N}}\left(\nu_{N}(A)-\mu_{N}\right)^{2}dA = \mu_{N}^{2}|Z_{N}|.
}
This together with Lemma \ref{MVlem} implies that
\eq{
|Z_{N}| \leq \frac{1}{\mu_{N}} \to 0 \quad\text{as}\quad N\to\infty.
}
\end{proof}

\section{Concluding remarks and open questions}
Remarks and open questions about homogeneous cases are stated in \cite[Section 4]{KW18}, \cite[Section 7.1]{KW19}, and \cite[Section 1]{KSY21}, so we skip remarks about homogeneous cases by refering these two references.
\subsection{$A$-fixed singly metric case}
In this paper, we focused on the doubly metric case ($\wh{D}_{m,n}(\psi)$) and the $\mb{b}$-fixed singly metric case ($\wh{D}_{m,n}^{\mb{b}}(\psi)$). Thus, it is natural to ask the same question for $A$-fixed singly metric case. More precisely, for fixed $A\in M_{m,n}(\bR)$ let $$\wh{D}_{m,n,A}(\psi)=\{\mb{b}\in\bR^{m}: (A,\mb{b})\in \wh{D}_{m,n}(\psi)\}.$$ As stated in \cite[Section 7.3]{KW19}, $\wh{D}_{m,n,A}(\psi)$ seems to depend heavily on Diophantine properties of $A$. So, one can ask which Diophantine conditions on $A$ guarantee that $\wh{D}_{m,n,A}(\psi)$ has full or null Lebesgue (or Hausdorff) measure depending on $\psi$. It seems plausible to use the transference lemma (Lemma \ref{dual}).

\subsection{Dirichlet non-improvable set vs. well-approximable set}
It has been shown in \cite{HKWW18} that the set of $\psi$ Dirichlet non-improvable numbers is a bigger set than the set of $\psi$-approximable numbers. It is subsequently proved in \cite{BBH20-1} that the difference set is non-trivial, see also \cite{BBH20-2}. So, it is natural to ask whether it is reasonable to conclude that $W_{m,n}(\psi)$ is contained in the set $\wh{D}_{m,n}(\Psi)^c$ for an appropriate choice of $\Psi$. In particular, with the notations in Subsection \ref{subsec4.1}, it raises natural question of estimating the Hausdorff dimension of difference sets $\wh{D}_{m,n}(\psi)^c \setminus \limsup \wh{W}_{S,\eps}$ and $\wh{D}_{m,n}^{\mb{b}}(\psi)^c \setminus W_{\mb{b},\eps}$ as in \cite{BBH20-1, BBH20-2}. 

%\subsection{General norm and general dimension function}

\subsection{Weighted setting}
Let $\mathbf{r}=(r_1,\cdots,r_m)$ be an $m$-tuple and $\bs=(s_1,\cdots,s_n)$ be an $n$-tuple such that $r_i,s_j>0$ and $\displaystyle\sum_{1\leq i\leq m} r_i=1=\displaystyle\sum_{1\leq j\leq n} s_j$. Define the $\mathbf{r}$-quasinorm of $\bx\in\bR^m$ and $\bs$-quasinorm of $\by\in\bR^n$ by
$$\|\bx\|_{\mathbf{r}}:=\displaystyle\max_{1\leq i\leq m}|x_i|^{\frac{1}{r_i}} \mbox{  and }\|\by\|_\bs:=\displaystyle\max_{1\leq j\leq n}|y_j|^{\frac{1}{s_i}}.$$
For a non-increasing function $\psi:[T_0,\infty)\rightarrow \bR_{+}$, where $T_0>1$ is fixed, we say that $A\in M_{m,n}(\bR)$ is $\psi$-Dirichlet with respect to the weight $(\mathbf{r},\mathbf{s})$ if the system
$$\|A\bq-\bp\|_{\mathbf{r}}<\psi(T) \mbox{  and } \|\bq\|_\bs<T$$
has a nontrivial integral solution for all large enough $T$. 

In this paper we have computed the Hausdorff measure of Dirichlet non-improvable sets in the unweighted setting $\mathbf{r}=(\frac{1}{m},\cdots,\frac{1}{m}), \bs=(\frac{1}{n},\cdots,\frac{1}{n})$, so one can ask the same questions for arbitrary weights $(\mathbf{r},\mathbf{s})$. There have been several recent results on the weighted ubiquity and weighted transference theorems (see \cite{CGGMS20}, \cite{G20}, and \cite{WW21}). It seems plausible to utilize these results to obtain a criterion for the Hausdorff measure of the $\psi$-Dirichlet non-improvable set in the weighted setting, as we used the ubiquity and the mass transference principle in the unweighted setting.

\subsection{Diophantine exponents}
In Corollary \ref{exponent}, we computed the Hausdorff dimension of the set of pairs $(A,\mathbf{b})$ (or the set of $A$ for fixed $\mathbf{b}\in\bR^n\setminus\bZ^n$) with uniform Diophantine exponents $\hat{w}(A,\mathbf{b})\leq w$. It directly implies that the Hausdorff dimension of the level set of pairs $(A,\mathbf{b})$ with $\hat{w}(A,\mathbf{b})=w$ is $mn+m-\frac{n-mw}{1+w}$ as stated in Corollary \ref{exponent}, where $w\leq\frac{n}{m}$. However, the results of the present paper does not give the Hausdorff dimension of the level sets for $w>\frac{n}{m}$ since the $\psi$-Dirichlet non-improvable set with $\psi(q)=q^{-mw}$ has full Hausdorff dimension. Therefore, a study on the Hausdorff dimension of Dirichlet improvable sets (instead of non-improvable sets) would be necessary to compute the Hausdorff dimension of the level sets for $w>\frac{n}{m}$.

%\bibliographystyle{amsalpha}
%\bibliography{uribib}
\def\cprime{$'$} \def\cprime{$'$} \def\cprime{$'$}
\providecommand{\bysame}{\leavevmode\hbox to3em{\hrulefill}\thinspace}
\providecommand{\MR}{\relax\ifhmode\unskip\space\fi MR }
% \MRhref is called by the amsart/book/proc definition of \MR.
\providecommand{\MRhref}[2]{%
  \href{http://www.ams.org/mathscinet-getitem?mr=#1}{#2}
}
\providecommand{\href}[2]{#2}

\end{document}